\newtheorem{thm}{Theorem}[section]
\newtheorem{lem}[thm]{Lemma}
\newtheorem{prop}[thm]{Proposition}
\theoremstyle{definition}
\theoremstyle{remark}
\numberwithin{equation}{section}
\newcommand{\abs}[1]{\lvert#1\rvert}
\newcommand{\e}{\varepsilon}
  \def\cD{\mathcal{D}}
\def\cF{\mathcal{F}}
\def\cN{\mathcal{N}}
\def\bE{\mathbb{E}}
\def\bN{\mathbb{N}}
\def\bP{\mathbb{P}}
\def\bR{\mathbb{R}}
\def\bZ{\mathbb{Z}}
\def\P{\bP} \def\E{\bE}
\def\mE{\mathbf{E}}
\def\mP{\mathbf{P}}
\def\m1{\mathbf{1}}
\newcommand{\be}{\begin{equation}}
\newcommand{\ee}{\end{equation}}
\newcommand{\nn}{\nonumber}
\newcommand{\fl}[1]{\lfloor{#1}\rfloor}
\DeclareMathOperator{\Var}{Var}   
\DeclareMathOperator{\esssup}{ess\,sup}
\DeclareMathOperator{\essinf}{ess\,inf}
\def\wt{\widetilde}   
\def\ppa{p} %parameter of exponentials
\def\ppb{s} %another parameter for the calculations with exponentials
\begin{document}

%\nocite{*}

%\doublespacing

\title[Last-passage shape]{Properties of the limit shape for some last passage    growth models in   random
environments}
%\shorttitle{Last-passage limits}
%\title{Boundary asymptotics for the limit shape of the   corner growth model in a random
%environment}
%\title{\textbf{\large{LIMITING SHAPE FOR LAST PASSAGE PERCOLATION MODELS WITH RANDOM ENVIRONMENTS}}}
\author{Hao Lin}
\address{Hao Lin, Department of Mathematics, University of
                Wisconsin-Madison, Madison, WI 53706, USA}
\email{lin@math.wisc.edu}

\author{Timo Sepp\"{a}l\"{a}inen}
\address{Timo Sepp\"{a}l\"{a}inen, Department of Mathematics, University of
                Wisconsin-Madison, Madison, WI 53706, USA}
\email{seppalai@math.wisc.edu}
\urladdr{http://www.math.wisc.edu/~seppalai}
\thanks{T.\ Sepp\"al\"ainen was partially supported by
National Science Foundation grants DMS-0701091  and DMS-1003651,
 and by the Wisconsin Alumni Research
Foundation.}
\keywords{corner growth model, random environment, limit shape, last passage percolation,
queues in series}
%\shorttitle{Last-passage limits}
\subjclass[2000]{60K35, 60K37, 60K25}
\date{\today}
\begin{abstract}
We study directed   last passage percolation on the
planar square  lattice whose weights
have general distributions, or equivalently,    queues in series with
general service distributions.
Each row of the last passage
  model has its own randomly chosen  weight distribution.
    We investigate the limiting time
constant close to the boundary of the quadrant.  Close to the $y$-axis,
where  the number of
random distributions averaged over stays large,  the limiting time constant
takes the same  universal form as in the homogeneous model.
But close to the $x$-axis we see the effect of the tail of the distribution of
the random environment.
\end{abstract}
\maketitle

% ----------------------------------------------------------------

% ----------------------------------------------------------------

\section{Introduction}
This paper studies the limit shapes of some last passage percolation models in
random environments, specifically, the corner growth model and
two Bernoulli models with different rules for admissible paths.
%In queueing terms we look at the following situation.

We introduce the corner growth model through its queueing interpretation.
   Consider
service stations in series, labeled $0, 1, 2, \dotsc, \ell$, each
with unbounded waiting room and first-in first-out (FIFO) service
discipline. Initially customers $0, 1, 2, \dotsc, k$ are queued up
at server $0$. At time $t=0$ customer $0$ begins service with server
$0$. Each customer moves through the system of servers  in order, joining
the queue at server $j+1$ as soon as service with server $j$ is complete.
%FIFO discipline  means that server $j$ completes the service of customer
%$i$ before turning to serve customer $i+1$.
After
customer $i$ departs server $j$, server $j$ starts serving customer $i+1$ immediately
if $i+1$ has been waiting in the queue,  or then waits for
customer $i+1$ to arrive from   station $j-1$.
Customers stay ordered throughout the process.  Let $X(i,j)$ be
the service time that customer $i$ needs at station $j$, and
$T(k,\ell)$  the time when customer $k$ completes service with
server $\ell$.

Asymptotics for $T(k,\ell)$  as $k$ and $\ell$ get large   have been
investigated a great deal   in the past two decades.
A seminal paper  by
  Glynn-Whitt \cite{glyn-whit}   studied the case of
i.i.d.\ $\{X(i,j)\}$.   They took advantage
of the   connection with
directed  last-passage percolation given by  the identity
\be T(k,\ell)=\max_\pi \sum_{(i,j)\in\pi} X(i,j).  \label{last-pass-1}\ee
The maximum is taken over
non-decreasing nearest-neighbor lattice paths $\pi\subseteq\bZ_+^2$
 from $(0,0)$ to $(k,\ell)$
that are  of  the form
$\pi=\{(0,0)=(x_0,y_0), (x_1,y_1),\dotsc, (x_{k+\ell},y_{k+\ell})=(k,\ell)\}$
where $(x_i,y_i)-(x_{i-1},y_{i-1})=(1,0)$ or $(0,1)$.
A quick inductive proof  of \eqref{last-pass-1}  together  with earlier references to this
observation  can be found
in  \cite{glyn-whit} (see Prop.~2.1).
   This particular  last passage model is also known as the
{\sl corner growth model}.

 Next we add   a random environment to  this queueing  model.  The environment is a
sequence $\{F_j: j\in\bZ_+\}$ of probability distributions, generated by a
probability measure-valued ergodic or i.i.d.\ process with distribution $\bP$.
Given the sequence $\{F_j\}$, we assume that the
 variables  $\{X(i,j)\}$  are independent and  $X(i,j)$ has distribution $F_j$.
In the queueing picture this means  that  the service times $\{X(i,j): i\in\bZ_+\}$ at service
station  $j$ have  common    distribution $F_j$,  and at the outset
 the distributions $\{F_j: j\in\bZ_+\}$ themselves are chosen randomly according
 to some given law $\bP$.    Obviously  the  labels ``customer'' and ``server''
 are interchangeable because we can switch around the roles of the indices $i$
 and $j$.
 %We could equally well stipulate that   service times $\{X(i,j): j\in\bZ_+\}$ for customer $i$
%   are i.i.d.\ with common    distribution $F_i$.

 The asymptotic regime we consider for $T(k,\ell)$ is the {\sl hydrodynamic}  one where
 $k$ and $\ell$ are both of order $n$ and $n$ is taken to $\infty$.  Under some moment  assumptions
 standard subadditive considerations and approximations
 imply the existence  of the deterministic limit
 \[\Psi(x,y)=\lim_{n\to\infty} n^{-1} T(\fl{nx},\fl{ny})\qquad\text{ for all $(x,y)\in\bR_+^2$.}\]
 Only in the case where the distributions $F_j$ are exponential or
 geometric has it been possible to describe explicitly the limit $\Psi$.
 This is the case of $\cdot\,/M/1$ queues in series,  which in terms of
 interacting particle systems is the same as studying either the
 totally asymmetric simple exclusion process or the zero-range process
 with constant jump rate.  For rate 1 i.i.d.\ exponential $\{X(i,j)\}$
the limit $\Psi(x,y)=(\sqrt x+\sqrt y\,)^2$ was first derived by Rost \cite{Rost1981}
in a seminal paper on hydrodynamic limits of asymmetric exclusion processes.
The random environment model with exponential $F_j$'s was studied
in \cite{andj-etal, krug-ferr, TimoKrug}.

Let us now set aside the queueing motivation and consider    the
last-passage model on the first quadrant   $\bZ_+^2$ of the planar integer lattice,
defined by the nondecreasing lattice paths and
the random  weights $\{X(i,j)\}$.    For the queueing application it is natural to
assume the weights nonnegative, but in the general last-passage situation there
is no reason to restrict   to nonnegative weights.

The ideal limit shape result would have some degree of universality, that is,
apply to a broad class of distributions.  Such
  results have been obtained only close to the
boundary:  in    \cite{Martin2004}  Martin showed that in the i.i.d.\ case, under
suitable moment hypotheses and
 as  $\alpha \searrow 0$,
\be \Psi(1, \alpha) = \mu + 2\sigma \sqrt{\alpha}+ o(\sqrt{\alpha}),
\label{martin-1} \ee where $\mu$ and $\sigma^2$ are the common mean
and variance of the weights  $X(i,j)$. Also, the $o(\sqrt\alpha)$
term in the statement means that   $\lim_{\alpha \searrow 0}
\alpha^{-1/2}\bigl[ \Psi(1,\alpha) - \mu - 2\sigma \sqrt{\alpha} \,
\bigr] =0. $ In the i.i.d.\ case $\Psi$ is symmetric so the same
holds for $\Psi(\alpha, 1)$.

Our goal is to find the form Martin's result takes
in the random environment  setting.  $\Psi$ is no longer necessarily symmetric
since the distribution  of the array $\{X(i,j)\}$ is not invariant under transposition.
%$\{X(i,j): j\in\bZ_+\}$ is an i.i.d.\ sequence but
%$\{X(i,j): i\in\bZ_+\}$ is i.i.d.\ only conditionally.
So we must ask the question
separately for $\Psi(1, \alpha)$ and $\Psi(\alpha, 1)$.

It turns out that for $\Psi(\alpha, 1)$, where the number of rows stays large relative
to the number of columns,
   the fluctuations  of the
  environment average out to the degree that our result in Theorem \ref{thm:main}  below is
 essentially identical to Martin's  result in the homogeneous environment.
 %The only difference is that the variance is replaced by the expectation of
 %the conditional variance:
 We  still have
$\Psi(\alpha,1) = \mu + 2\sigma \sqrt{\alpha}+ o(\sqrt{\alpha})$ as
$\alpha \searrow 0$, where now $\mu$ is the mean as before but
$\sigma^2$ is the average of the ``quenched'' variance.   That is,
if we let $\mu_0=\int x\,dF_0(x)$ and
 $\sigma^2_0=\int (x-\mu_0)^2\,dF_0(x)$ denote the mean and  variance of the random distribution
$F_0$, and $\bE$   expectation under $\bP$, then $\mu=\bE(\mu_0)$ and $\sigma^2 = \bE( \sigma^2_0)$.

The case $\Psi(1, \alpha)$ does not possess a clean result such as the one above.
Even though we are studying the deterministic  limit obtained {\sl after} $n$ has been taken to
infinity,  we see an effect from the tail of the distribution of the quenched mean
$\mu_0$.   We illustrate this with the case of exponential $\{F_j\}$.
Now the number $n\alpha$ of   distributions $F_j$ is small compared to the
number $n$ of weights $X(i,j)$ in each row, hence   the fluctuations among the $F_j$'s
become prominent.  The effect comes in two forms:  first, the leading term is no longer the
averaged mean $\mu$ but the maximal mean.  Second,
  if large values among the row means $\mu_j$ are rare,
  the order of the $\alpha$-dependent correction is smaller than the $\sqrt\alpha$
  seen above and this order of magnitude  depends on the
tail of the distribution of $\mu_0$.  As an exponent characterizing this tail changes,
we can see a phase transition of sorts in the power of $\alpha$, with a logarithmic
correction at the transition point.

For  general distributions  we derive bounds on $\Psi(1, \alpha)$ that indicate that
in the case of finitely many distributions the correction is of order  $\sqrt\alpha$.

As auxiliary results we  need bounds on the limits for last-passage models
with Bernoulli weights under a random environment.      However, with Bernoulli weights
the standard  corner growth  model    is {\sl not} one of the explicitly
solvable cases.
 The model with  Bernoulli weights does become
 solvable   when the path geometry is altered suitably.  The model  we
take up is the one where the paths are weakly increasing in one
coordinate but strictly in the other.  There are two cases,
depending on which coordinate is required to increase strictly.   If
we require the  $x$-coordinate to increase strictly then an
admissible path $\{(x_0,y_0), (x_1,y_1),\dotsc, (x_m,y_m)\}$
satisfies
\be \text{$x_{i+1}-x_i =1$ %for $i=0,...,m-1$
and $y_0\le y_1\le  \dotsm\le
y_m$. } \label{path-2}\ee
The other  case   interchanges $x$ and $y$. These
cases have to be addressed separately because the random environment
attached to rows makes the model asymmetric. The sum of these two
last-passage values gives   a bound for the case where neither
coordinate is required to increase strictly in each step.

We derive   the exact limit constants for   Bernoulli models with both types of
 strict/weak paths.   For one of them this has been done
  before by Gravner, Tracy and Widom \cite{GTW2002}.  Their proof utilizes the
  fact that the distribution of $T(k,\ell)$ is a symmetric function of the
  environment (at least for the particular Bernoulli case they study).
Our proof is completely different. It is based on the idea   in \cite{Timo1998}  where
the limit for
   the homogeneous case was derived: the last-passage model is coupled
with a particle system whose invariant distributions can be written down explicitly,
and then through some convex analysis the speed of a tagged particle yields
the explicit limit of the last-passage model.

\medskip

{\sl Further remarks on the literature.}  The present paper does not address questions of
fluctuations, but let us mention some highlights from the literature.
For the last-passage model with i.i.d.\ exponential or geometric weights,  the
distributional limit with fluctuations of order $n^{1/3}$ and limit
given by the  Tracy-Widom GUE distribution was proved by Johansson \cite{joha}.
As for the shape,  universality has been achieved only close to the boundary,
by  Baik-Suidan \cite{baik-suid}  and  Bodineau-Martin \cite{bodi-mart}.

Fluctuations of the Bernoulli model with strict/weak paths and
homogeneous weights were derived first in
\cite{joha01} and later also in \cite{grav-trac-wido-01}.  For the model in a
random environment fluctuation limits appear  in \cite{GTW2002, grav-trac-wido-02b}.

On the lattice $\bZ_+^2$ we can imagine three types of nondecreasing
paths: (i) weak-weak: both coordinates required to increase weakly, the
type used in \eqref{last-pass-1};  (ii) strict-weak:  one coordinate
increases strictly, as above in \eqref{path-2};  and (iii) strict-strict:
 both coordinates increase
strictly so an admissible path $\{(x_0,y_0), (x_1,y_1),\dotsc, (x_m,y_m)\}$
satisfies
$x_0<\dotsm<x_m$ and $y_0< \dotsm<
y_m$.   As mentioned, with Bernoulli weights the strict-weak case
is solvable but the weak-weak case appears harder. The third case,
strict-strict, is also solvable
 with Bernoulli weights.   The shape was derived in
\cite{sepp97incr}  and recent work on this model appears in  \cite{georgiou-10}.

\medskip

{\sl Organization of the paper.}   The main results on the shape
close to the boundary are in Section \ref{sec-main} and the results
for Bernoulli models in Section \ref{sec:bernoulli}.  Section
\ref{sec:limit} sketches the proof of the existence of the  limiting
shape, a result we basically take for granted.   The main proofs
follow:  in Section \ref{sec:thm:main} for Theorem \ref{thm:main} on
$\Psi(\alpha, 1)$, in Section \ref{sec:1,a-thm} for Theorem
\ref{1,a-thm} on $\Psi(1, \alpha)$,  and in Section
\ref{sec:exp-thm} for Theorem \ref{exp-thm} for the exponential
model.

\medskip

{\sl Some frequently used notation.}  We write \[
\underset{\bP}{\esssup}\,f=\inf\{ s\in\bR:  \bP(f>s)=0\}\] for the
essential supremum of a function $f$ under a measure $\bP$.
 $\bZ_+=\{0,1,2,\dotsc\}$,  $\bN=\{1,2,3, \dotsc\}$,  and $\bR_+=[0,\infty)$.
  $I(A)$ is the indicator function of event $A$.

\section{Main results}\label{sec-main}
First a precise definition of the  last-passage model in a random environment.
Let $\bP$ be a stationary,
ergodic  probability measure on the space
$\mathcal{M}_1(\bR)^{\bZ_+}$ of sequences of Borel probability
distributions on $\bR$. $\bE$ denotes expectation under $\bP$. For
some of the main results $\P$ will   be assumed to be an i.i.d.\ product
measure.   A realization of the distribution-valued process under
$\P$ is denoted by $\{F_j\}_{j\in\bZ_+}$. This is the environment.   Given $\{F_j\}$, the
weights  $\{X(z): z \in \bZ_+^2\}$ are independent real-valued random variables
with marginal
distributions $X(i,j)\sim F_j$ for $(i,j)\in\bZ_+^2$. Let $(\Omega,
\mathcal{F}, \mathbf{P})$ be the probability space on which all
variables $\{ F_j, X(i,j)\}$ are defined, and denote expectation under
$\mP$ by $\mE$.

A (weakly)  nondecreasing path is a sequence of
points  $z_0=(x_0,y_0),
z_1=(x_1,y_1),\dotsc, z_m=(x_m, y_m)$ in $\bZ_+^2$
 that satisfy $x_0\leq  x_1 \leq
\dotsm \leq x_m$, $y_0 \leq y_1 \leq \dotsm \leq y_m$, and
$|x_{i+1} - x_i| + |y_{i+1} -y_i| =1$.
 For $z_1, z_2 \in \bZ^2_+$ with $z_1 \leq
z_2$ (coordinatewise ordering), let $\Pi(z_1, z_2)$ be the set of nondecreasing paths from
$z_1$ to $z_2$.   Whether   the endpoints $z_1$ and  $z_2$ are
  included in the path  makes no difference to the limit results below.
 The last-passage time $T(z_1, z_2)$
 from $z_1$ to $z_2$ is defined  by
$$T(z_1, z_2) = \max_{\pi \in \Pi(z_1,z_2)}  \sum_{ z\in \pi} X(z).$$
When $z_1 = 0$ abbreviate
 $ \Pi(z) = \Pi(0, z)$ and $ T(z) = T(0, z)$.

Put these three assumptions on the model:
\be \mE |X(z)| < \infty,  \label{momass}\ee
\be
\int_0^{\infty} \Bigl\{1-\bE( F_0(x))\Bigr\}^{1/2} dx < \infty,
\label{tailass1}\ee
and
\be
\int_0^{\infty} \underset\bP{\esssup}(1 -  F_0(x))\,dx<\infty.
\label{tailass2}\ee
We begin with this by now standard result that defines our object of study, namely
the function $\Psi$.  The proof is briefly commented on  in Section \ref{sec:limit}.

\begin{prop}  Assume $\P$ is ergodic
and satisfies  \eqref{momass}, \eqref{tailass1} and \eqref{tailass2}.
Then for all $(x,y)\in(0,\infty)^2$ the  last passage time constant
\begin{equation}
\label{def1}
 \Psi(x,y) = \lim_{n\rightarrow \infty}\frac{1}{n} T(\lfloor
nx \rfloor, \lfloor ny \rfloor)
\end{equation}
exists as a limit both $\mP$-almost surely and in $L^1(\mP)$.
Furthermore, $\Psi(x,y)$ is a homogeneous, concave and continuous
function on $(0,\infty)^2$.  \label{pr:limit}
\end{prop}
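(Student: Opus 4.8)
\emph{Proof proposal.} The plan is to obtain $\Psi$ from a subadditive (equivalently superadditive) ergodic theorem along rational directions, and then to propagate the conclusion to all of $(0,\infty)^2$ by convexity and monotonicity. The one structural input is superadditivity, which is immediate: concatenating an optimal path from $0$ to $u$ with an optimal path from $u$ to $u+v$ gives
\[
T(0,u+v)\ \ge\ T(0,u)+T(u,u+v).
\]
Fix a rational direction $(x,y)=(p/r,q/r)$ with $p,q,r\in\bN$ and set $U_{m,m'}=-T\bigl((mp,mq),(m'p,m'q)\bigr)$ for $0\le m<m'$. Then $U_{0,m'}\le U_{0,m}+U_{m,m'}$, and since $\bP$ is stationary the law of the whole array $\{U_{m,m'}\}$ is invariant under the measure-preserving map of $(\Omega,\cF,\mP)$ that shifts the environment up by $q$ rows and relabels the weights by $(p,q)$. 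Hence Liggett's subadditive ergodic theorem applies and yields, $\mP$-a.s.\ and in $L^1(\mP)$, convergence of $(rm)^{-1}T(mp,mq)$ along the subsequence $n=rm$, once the integrability requirements $\mE|U_{0,1}|<\infty$ and $\mE\,U_{0,m}\ge -cm$ are checked.

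Verifying these moment bounds is the technical heart, and it is exactly where hypotheses \eqref{momass}, \eqref{tailass1} and \eqref{tailass2} enter; I expect this to be the main obstacle. The lower bound on $T$, and finiteness of the first moment of a single weight, come from \eqref{momass}. For the matching upper bound one needs $\mE\,T(\fl{nx},\fl{ny})\le Cn$, and here I would follow Martin's scheme: dominate the last-passage value over an $a\times b$ box by a sum of $\max(a,b)$ independent row-maxima and estimate $\mE[\max_{1\le i\le k}X(i,j)]$ by a quantity of the form $\int_0^\infty \bP\bigl(X(1,j)>t\bigr)^{1/2}\,dt$. Averaging over the environment, assumption \eqref{tailass1} gives finiteness in the regime where many rows are averaged, while \eqref{tailass2}, through the essential supremum, covers the thin regime toward the $x$-axis and ensures that the rare large values of the environment do not spoil the bound. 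The square root in \eqref{tailass1} is forced by the union bound over the $\binom{a+b}{a}$ admissible paths, and \eqref{tailass2} is precisely what makes the estimate survive down to the regime in which only a few distributions $F_j$ are seen.

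Granting the subsequential convergence, the limit is the deterministic constant $\gamma(p,q)=\inf_m m^{-1}\mE\,T(mp,mq)$: a.s.\ constancy follows from ergodicity of $\bP$, transparently when $\bP$ is i.i.d.\ since then the relevant shift is mixing, and in the general ergodic case by the standard route of first passing to the quenched mean via a concentration estimate and then invoking ergodicity of $\bP$, which I regard as routine. The subsequence restriction is then removed by monotonicity of $T$ in its endpoint: for $rm\le n\le r(m+1)$ and nonnegative weights one has $T(mp,mq)\le T(\fl{nx},\fl{ny})\le T\bigl((m+1)p,(m+1)q\bigr)$, and both sides divided by $n$ tend to $\gamma(p,q)/r$; the general signed case reduces to this by a standard truncation and comparison, with \eqref{tailass1} and \eqref{tailass2} controlling the discarded large weights. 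Thus $\Psi(x,y):=\lim n^{-1}T(\fl{nx},\fl{ny})$ exists $\mP$-a.s.\ and in $L^1(\mP)$ for every rational $(x,y)$, with value $\gamma(rx,ry)/r$; homogeneity is built into this formula, and superadditivity of $T$ passes to concavity $\Psi(u+v)\ge \Psi(u)+\Psi(v)$ on the rational points of $(0,\infty)^2$.

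Finally, a finite concave function on the rational points of an open convex set extends uniquely to a finite concave---hence continuous and locally Lipschitz---function on that set; call this extension $\Psi$. For an arbitrary $(x,y)\in(0,\infty)^2$, sandwiching $T(\fl{nx},\fl{ny})$ between the last-passage values at rational directions just below and just above $(x,y)$ (using endpoint monotonicity for nonnegative weights, and the truncation reduction in general), together with continuity of the rational $\Psi$, gives $\mP$-a.s.\ and $L^1(\mP)$ convergence of $n^{-1}T(\fl{nx},\fl{ny})$ to $\Psi(x,y)$. Everything outside the moment estimate of the second paragraph---the subadditive ergodic theorem, the interpolation, and the convex-analytic facts---is soft.
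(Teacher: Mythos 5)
Your overall skeleton is the same as the paper's: apply Liggett's subadditive ergodic theorem to $Z_{m,n}=-T((mx,my),(nx,ny))$ along lattice (integer/rational) directions, deduce homogeneity and concavity from superadditivity and then continuity by convexity, and finally interpolate to real directions using monotonicity in the endpoint. You also correctly put your finger on the one place where real work is required, namely the moment hypotheses $\mE Z_{0,1}^+<\infty$ and $\mE Z_{0,n}\ge -cn$, and you identify the heuristic role of \eqref{tailass1} versus \eqref{tailass2}. So far, so good.

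The concrete mechanism you propose for the crucial bound $\mE\,T(\fl{nx},\fl{ny})\le Cn$, however, is not the right one and would not close. Dominating the last-passage value by a sum of $\max(a,b)$ independent row-maxima and estimating $\mE\max_{1\le i\le k}X(i,j)$ directly does not produce the $(1-F)^{1/2}$ integrability condition: a weakly increasing path can linger in a row and collect many weights there, so $T$ is not bounded by a sum of single row-maxima, and in any case a naive $\mE\max$ estimate gives a bound of the form $\int_0^\infty\min(1,k\,(1-F(t)))\,dt$, not a power $1/2$. The square root in \eqref{tailass1} really does come from last-passage combinatorics, but through a different route, and this is where the paper leans on the Bernoulli results of Theorem \ref{thm:bernoulli2}. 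The argument is a two-stage bootstrap: first, Proposition \ref{pr:limit} is observed to hold trivially for Bernoulli weights (bounded, so the subadditive ergodic theorem applies without any moment issue), and the explicit bound $\Psi_{Ber}(x,y)\le (y+4\sqrt{xy})\sqrt{\bar p}+bx$ from \eqref{bernoulli4} is established. Second, for general weights one writes $X(z)_+=\int_0^\infty I(X(z)>u)\,du$, pulls the maximum over paths inside the $du$-integral, takes $\sup_n\frac1n\mE$, and recognizes the integrand as $\Psi_{Ber[1-F(u)]}(x,y)$; the Bernoulli bound then gives $\frac1n\mE Z_{0,n}\ge -(y+4\sqrt{xy})\int_0^\infty(1-\bE F_0(u))^{1/2}du - x\int_0^\infty\esssup_\bP(1-F_0(u))\,du$, which is exactly where \eqref{tailass1} and \eqref{tailass2} enter and where the $1/2$ power appears. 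Your union-bound intuition about $\binom{a+b}{a}$ paths is the right heuristic, but it is the Bernoulli time constant---and hence Theorem \ref{thm:bernoulli2}---that makes it precise, not a row-maximum reduction. The remainder of your sketch (extension from rationals to reals, concavity, continuity, and handling signed weights by truncation) matches the paper.
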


Assumption \eqref{tailass1}  is also used for the
constant distribution case, see (2.5) in \cite{Martin2004}.
Some further control along the lines of assumption
 \eqref{tailass2} is required for our case.  For example,
suppose $1-F_j(x)=e^{-\xi_j x}$ for random $\xi_j\in(0,\infty)$.
Then  \eqref{tailass2} holds iff ${\essinf}_\bP (\xi_0)>0$.
If the distribution of  $\xi_0$ is not bounded away from zero,
  $n^{-1}T(n,n)\to\infty$ because
we can  simply collect all the weights from the row with minimal
$\xi_j$ among $\{\xi_0,\dots,\xi_n\}$. However, assumption
\eqref{tailass1} can be satisfied without bounding $\xi_0$ away from
zero.

Now we turn to the main results of the paper on   the   form of  the limit shape at
the boundary.   As explained in the introduction, for
 $\Psi(\alpha,1)$  we find a universal form as  $\alpha\searrow 0$.
In addition to the earlier assumptions, we need similar control of the left tail of the
distributions: %and also a finite second moment:
% \be \mE |X(z)|^2 < \infty,  \label{momass2}\ee
\be \int_{-\infty}^0 \bigl(\bE [F_0(x)]\bigr)^{{1}/{2}} dx< \infty
\label{tailass3}\ee and \be \int_{-\infty}^0 \underset\bP{\esssup}\,
F_0(x)\,dx<\infty.
 \label{tailass4}\ee
Let us point out
  that \eqref{tailass1} and \eqref{tailass3} together
guarantee   $\mE |X(z)|^2 < \infty$.   Let  $\mu_j = \mu(F_j)$ and $\sigma_j^2 =
\sigma^2(F_j)$
 denote the mean and variance of
distribution $F_j$. These are random variables under $\P$ with
expectations
  $\mu = \E( \mu_0)$ and   $\sigma^2 =\E(\sigma^2_0)$.

\begin{thm}  Assume the process $\{F_j\}$ is i.i.d.\ under $\P$, and satisfies %moment
%assumption \eqref{momass2} and
tail assumptions \eqref{tailass1},   \eqref{tailass2},
\eqref{tailass3} and    \eqref{tailass4}. Then,  as
$\alpha\downarrow 0$, $\Psi(\alpha,1) = \mu + 2\sigma \sqrt{\alpha}+
o(\sqrt{\alpha\,}).$ \label{thm:main}\end{thm}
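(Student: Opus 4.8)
The plan is to sandwich $\Psi(\alpha,1)$ between a lower bound and an upper bound that both have the form $\mu + 2\sigma\sqrt\alpha + o(\sqrt\alpha)$. For the \emph{lower bound} I would discard the random environment in favor of a single well-chosen distribution: since we go $\lfloor n\alpha\rfloor$ steps in the $x$-direction but $n$ steps in the $y$-direction, a typical vertical column of the path encounters many of the i.i.d.\ distributions $F_j$. So restrict attention to paths that make very long vertical runs, and on each such run replace the independent-but-non-identical weights by their behavior governed by the \emph{annealed} distribution $\bar F = \bE[F_0]$, which has mean $\mu$ and variance $\sigma^2$ (using that \eqref{tailass1}, \eqref{tailass3} give the second moment). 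A law-of-large-numbers / CLT heuristic for a single column of length $n$ contributes $\mu n + O(\sqrt n)$, and optimizing the placement of the $\lfloor n\alpha\rfloor$ ``transversal'' displacements among roughly $\alpha^{-1/2}$-scale blocks reproduces Martin's computation, giving $\Psi(\alpha,1)\ge \mu + 2\sigma\sqrt\alpha - o(\sqrt\alpha)$. Concretely I would invoke Martin's homogeneous result \eqref{martin-1} applied to the distribution $\bar F$, after a coupling argument showing that the environment model dominates (in an averaged sense, via a block/renewal construction) a homogeneous model with service law $\bar F$ placed on blocks of rows.

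For the \emph{upper bound} the natural tool is superadditivity plus a concentration estimate: $T(\lfloor n\alpha\rfloor, n)$ is at most a sum of $O(1/\beta)$ independent last-passage times over row-blocks of height $\beta n$, for a small auxiliary parameter $\beta=\beta(\alpha)$. Each block contributes at most $\mu\beta n + (\text{boundary correction})$, and the boundary corrections add up. The key input here is a \emph{quantitative} version of the limit shape near the axis that is uniform over the random environment; this is where I would expect to use the Bernoulli-model bounds advertised in the introduction (Section~\ref{sec:bernoulli}): truncating the weights at a level $M$ and comparing the truncated corner-growth model to a superposition of Bernoulli models lets one bound the contribution of ``short-wide'' sub-paths. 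Assumptions \eqref{tailass2} and \eqref{tailass4} are exactly what make the truncation error (the discarded large weights, whose row-wise essential-sup tails are integrable) negligible at scale $\sqrt\alpha$.

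The \textbf{main obstacle} will be the upper bound, specifically controlling the error terms uniformly enough that they are genuinely $o(\sqrt\alpha)$ rather than $O(\sqrt\alpha)$. One must rule out that an optimal path exploits an atypically ``fast'' row (a row $j$ with unusually large $\mu_j$ or heavy right tail): this is precisely why $\Psi(1,\alpha)$ behaves differently, and why here — with the \emph{long} direction along the rows-index — the i.i.d.\ averaging saves us, but only after a careful argument. I would handle this by a moderate-deviation / union-bound estimate over the $\lfloor n\alpha\rfloor$ columns showing that the maximal per-column empirical mean exceeds $\mu + \eps$ with probability that can be absorbed, combined with the truncation so that second moments suffice. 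Matching the constant $2\sigma$ rather than something larger then comes from the CLT-type two-term expansion, which is morally Martin's argument \eqref{martin-1} transplanted to the annealed distribution; the work is in verifying the transplant is legitimate given the row-to-row independence and the tail hypotheses \eqref{tailass1}--\eqref{tailass4}.
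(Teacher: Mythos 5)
Your proposal has a fundamental gap that would produce the wrong constant. The annealed distribution $\bar F=\bE[F_0]$ does \emph{not} have variance $\sigma^2=\bE[\sigma^2(F_0)]$. If $Y\sim\bar F$ then by conditioning on the random $F_0$,
\[
\Var(Y)=\bE[\sigma^2(F_0)]+\Var(\mu(F_0))=\sigma^2+\Var(\mu_0),
\]
which is strictly larger than $\sigma^2$ whenever the quenched mean $\mu_0$ fluctuates. Invoking Martin's \eqref{martin-1} for the homogeneous $\bar F$-model would therefore give $\mu+2\sqrt{\sigma^2+\Var(\mu_0)}\,\sqrt\alpha+o(\sqrt\alpha)$, which is not the statement of Theorem~\ref{thm:main}. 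Worse, your lower-bound claim would actually \emph{contradict} the theorem in concrete cases: take $F_0$ to be a point mass at a bounded random location $\mu_0$ with $\Var(\mu_0)>0$ and $\sigma_0^2\equiv 0$. The theorem gives $\Psi(\alpha,1)=\mu+o(\sqrt\alpha)$, while a computation of $T(\lfloor n\alpha\rfloor,n)$ shows $\Psi(\alpha,1)=\mu+\alpha\,\esssup\mu_0$, consistent with the theorem; but your claimed lower bound $\mu+2\sqrt{\Var(\mu_0)}\sqrt\alpha+o(\sqrt\alpha)$ would exceed it. So the sub-path construction you sketch cannot reduce to the annealed homogeneous model; the joint (row-correlated) law matters, and there is no obvious monotone coupling between the environment model and the i.i.d.\ $\bar F$ model in either direction.

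The heart of the theorem is precisely the identification of the correct constant as the \emph{averaged quenched} variance $\bE[\sigma_0^2]$ rather than the annealed variance, and that is where the real work lies. The paper's route is: a two-sided comparison lemma (Lemma~\ref{lem:diff}) proved by coupling and the Bernoulli bounds of Section~\ref{sec:bernoulli}; coarse-graining rows into $1\times r$ blocks (Lemma~\ref{lem:blocks}); a Berry--Esseen replacement of block sums by Gaussians with random means $\mu_{r,y}$ and random variances $V_{r,y}$ (Lemma~\ref{lem:N-replace}, requiring $r\to\infty$, $r\sqrt\alpha\to 0$); and then a \emph{separate} step (the lemma preceding~\eqref{eqn:main1}) that removes the random mean shift $\mu_{r,y}-r\mu_F$ at cost $O(\alpha r^2)=o(r\sqrt\alpha)$ --- this is exactly where the $\Var(\mu_0)$ contribution is shown to be subleading. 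Finally, because the resulting Gaussian model still has random row variances, the paper cannot simply cite Martin; it matches the random variances $\sigma_j^2$ with rate-$1/\sigma_j$ exponentials and uses the explicit formula from \cite{TimoKrug} (Lemma~\ref{exponential}) to extract the constant. Your proposal omits the coarse-graining, the mean-recentering step, and the exponential matching; a genuine proof must account for all three, since that is the only place the averaged quenched variance, rather than the annealed variance, can emerge.
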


Assumptions \eqref{tailass1} and    \eqref{tailass3}
 are direct counterparts of what was used for  Theorem 2.4
in \cite{Martin2004}.   Assumptions \eqref{tailass2} and
\eqref{tailass4}  are additional assumptions needed for handling the
random environment.  These assumptions are used to control estimates that come
from bounding limits of Bernoulli models.

\bigskip

We turn to
the case  $\Psi(1,\alpha)$.    The results will be
  qualitatively  different from  Theorem \ref{thm:main}.
The leading term will be  the essential supremum of the mean   instead of the averaged mean and
   we will see different orders for the first $\alpha$-dependent correction term.

  First a general result for which we restrict ourselves
to the case of finitely many distributions, but we can relax the i.i.d.\ assumption
of the random distributions.

\begin{thm}
Assume  the process $\{F_j\}$ of probability distributions is
stationary, ergodic, and  has a state space of finitely many
distributions $H_1,\dotsc, H_L$  each of which satisfies Martin's
\cite{Martin2004} hypothesis \be  \int_0^\infty
(1-H_\ell(x))^{1/2}\,dx  +   \int_{-\infty}^0
H_\ell(x)^{1/2}\,dx<\infty. \label{martin8}\ee Let $\mu^*=\max_\ell
\mu(H_\ell)$ be the maximal mean of the $H_\ell$'s.
 Then there exist
  constants $0<c_1<c_2<\infty $   such that,  as $\alpha\downarrow 0$,
\be   \mu^* + c_1  \sqrt{\alpha}+ o(\sqrt{\alpha}\,)\; \le \; \Psi(1, \alpha) \; \le \; \mu^* + c_2  \sqrt{\alpha}+ o(\sqrt{\alpha}\,).  \label{1,a-bd}\ee
\label{1,a-thm}\end{thm}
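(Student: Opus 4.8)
The plan is to prove the two inequalities in \eqref{1,a-bd} separately. Throughout I use that a nondecreasing path meets each row in one contiguous block (so the path sum splits over rows) and that the limit defining $\Psi$ is also an $L^1(\mP)$ limit (Proposition~\ref{pr:limit}), so it is enough to estimate $\mE\,T(\fl n,\fl{n\alpha})$. \textbf{Lower bound.} Fix an index $\ell^*$ with $\mu(H_{\ell^*})=\mu^*$ and $\sigma(H_{\ell^*})>0$ (the interesting case) and set $p^*=\P\{F_0=H_{\ell^*}\}>0$. By the ergodic theorem the rows $J_n=\{\,0\le j\le\fl{n\alpha}:F_j=H_{\ell^*}\,\}$ satisfy $|J_n|/\fl{n\alpha}\to p^*$ a.s. Restricting to paths that advance horizontally only inside the $J_n$-rows and climb straight through the rows in between, the path collects, inside the $J_n$-rows, exactly a homogeneous $H_{\ell^*}$-last passage value (those rows carry an i.i.d.\ $H_{\ell^*}$ array), plus at most $\fl{n\alpha}$ other weights, one per remaining row, whose conditional means lie in $[\min_\ell\mu(H_\ell),\mu^*]$ and hence contribute only $O(n\alpha)=o(n\sqrt\alpha)$ after division by $n$. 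Using $|J_n|\ge(p^*-\e)\,n\alpha$ eventually, monotonicity of last passage in the number of rows, the homogeneous shape theorem (the i.i.d.\ case of Proposition~\ref{pr:limit}) and the continuity of $\beta\mapsto\Psi^{H_{\ell^*}}(1,\beta)$, then $\e\downarrow 0$, and finally Martin's homogeneous asymptotics \eqref{martin-1} (applicable since $H_{\ell^*}$ obeys \eqref{martin8}), one obtains $\Psi(1,\alpha)\ge\mu^*+2\sigma(H_{\ell^*})\sqrt{p^*}\,\sqrt\alpha+o(\sqrt\alpha)$, which is the lower half of \eqref{1,a-bd} with $c_1=2\sigma(H_{\ell^*})\sqrt{p^*}$.

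\textbf{Upper bound.} Split $X(z)=\mu_{j(z)}+\bigl(X(z)-\mu_{j(z)}\bigr)$ and bound $T(\fl n,\fl{n\alpha})$ by the sum of the two last-passage values. A path uses $\ell_j\ge1$ sites in row $j$ with $\sum_j\ell_j=\fl n+\fl{n\alpha}+1$, so the mean part is at most $\mu^*(\fl n+\fl{n\alpha}+1)=\mu^*n+o(n\sqrt\alpha)$. For the centered part $T^{\mathrm c}$ (weights $X(z)-\mu_{j(z)}$, which are mean zero given the environment) I use the signed layer--cake identity
\[
X(z)-\mu_{j(z)}=\int_0^{\infty}\!\bigl(I\{X(z)-\mu_{j(z)}>s\}-q_{j(z)}(s)\bigr)\,ds-\int_{-\infty}^{0}\!\bigl(I\{X(z)-\mu_{j(z)}\le s\}-\widetilde q_{j(z)}(s)\bigr)\,ds,
\]
where $q_j(s)=\P(X-\mu_j>s\mid F_j)$ and $\widetilde q_j(s)=\P(X-\mu_j\le s\mid F_j)$; the integrals converge because $\mE(X-\mu_j)_{\pm}<\infty$. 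Each integrand is a mean-zero Bernoulli-type weight, so by subadditivity of $\pi\mapsto\max_{\pi}\sum_{\pi}(\cdot)$ and Fubini, $T^{\mathrm c}(\fl n,\fl{n\alpha})$ is at most $\int_0^{\infty}\overline T^{+}_s(\fl n,\fl{n\alpha})\,ds+\int_{-\infty}^{0}\overline T^{-}_s(\fl n,\fl{n\alpha})\,ds$, with $\overline T^{\pm}_s$ the last-passage value of the Bernoulli-in-random-environment model carrying the relevant \emph{centered} indicator weights at level $s$. Here the auxiliary estimates of Section~\ref{sec:bernoulli} are used: since the layers are centered, $\limsup_n n^{-1}\mE\,\overline T^{+}_s(\fl n,\fl{n\alpha})$ has no $\alpha$-independent part but only a fluctuation term $\le C\sqrt{q^{+}(s)}\,\sqrt\alpha+o(\sqrt\alpha)$, where $q^{+}(s)=\max_\ell\P(Y_\ell-\mu(H_\ell)>s)$ for $Y_\ell\sim H_\ell$, and similarly for $\overline T^{-}_s$ with $q^{-}(s)=\max_\ell\P(Y_\ell-\mu(H_\ell)\le s)$; these bounds also furnish an $n$-uniform, $s$-integrable majorant of order $\sqrt{q^{\pm}(s)}$, which legitimizes integrating in $s$ and passing to the limit in $n$. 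Then $\limsup_n n^{-1}\mE\,T^{\mathrm c}(\fl n,\fl{n\alpha})\le C\bigl(\int_0^{\infty}\sqrt{q^{+}(s)}\,ds+\int_{-\infty}^{0}\sqrt{q^{-}(s)}\,ds\bigr)\sqrt\alpha+o(\sqrt\alpha)$, and since $\sqrt{\max_\ell(\cdot)}\le\sum_\ell\sqrt{(\cdot)}$, each of those two integrals is finite by \eqref{martin8} (after a shift of the integration variable, using $\max_\ell|\mu(H_\ell)|<\infty$). Adding the $\mu^*n$ term gives $\Psi(1,\alpha)\le\mu^*+c_2\sqrt\alpha+o(\sqrt\alpha)$ for a finite $c_2$, which we enlarge if necessary so that $c_1<c_2$.

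The step I expect to be the main obstacle is the upper bound, and precisely extracting the correct power of $\alpha$ from the Bernoulli inputs. The naive comparison --- dominating $X-\mu_j$ by $(X-\mu_j)_+$ and then, layer by layer, by \emph{un}centered Bernoulli last-passage values --- fails, because the Bernoulli-in-random-environment last-passage constant in direction $(1,\alpha)$ does not vanish as $\alpha\downarrow0$ (it tends to the maximal success probability), so such a bound only yields $\Psi(1,\alpha)\le\mu^*+(\mathrm{const})+O(\sqrt\alpha)$ and loses all $\sqrt\alpha$-accuracy. Peeling off the row means first (so the remainder is centered and telescopes exactly into the $\mu^*n$ term) and then expanding that remainder in \emph{centered} Bernoulli layers is what repairs this, at the cost of having to establish in Section~\ref{sec:bernoulli} the fluctuation bound $C\sqrt q\,\sqrt\alpha$ for the centered Bernoulli-in-random-environment model, uniformly over success probabilities $\le q$ and uniformly enough in $n$ and $\alpha$ to integrate against the layers; I would prove this by coupling with the exactly solvable strict/weak Bernoulli model. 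Finally, a small point worth recording: the lower bound genuinely needs some maximal-mean $H_\ell$ to be non-degenerate --- if every maximal-mean distribution were a point mass, the $\alpha$-correction would actually be of order $\alpha$ rather than $\sqrt\alpha$.
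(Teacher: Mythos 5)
Your lower bound follows the same route as the paper: restrict the path to rows carrying a maximal-mean distribution $H_{\ell^*}$, appeal to the homogeneous shape theorem on those rows and then to Martin's asymptotics \eqref{martin-1}. (The caveat you record at the end --- that some maximal-mean $H_\ell$ must be non-degenerate for $c_1>0$ --- is a fair observation; the paper leaves this implicit.)

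The upper bound, however, has a genuine gap, and it sits exactly where you flag it. After peeling off the row means (which matches the paper's first step of shifting every distribution to mean zero), everything hinges on the asserted estimate
\[
\varlimsup_n n^{-1}\mE\,\overline T^{\pm}_s(\fl n,\fl{n\alpha})\ \le\ C\sqrt{q^{\pm}(s)}\,\sqrt\alpha+o(\sqrt\alpha)
\]
for the \emph{centered} Bernoulli--in--random--environment model with row-dependent centering $q_{j}(s)$. This is not among the Bernoulli estimates of Section \ref{sec:bernoulli}: \eqref{bernoulli1}--\eqref{bernoulli4} are for uncentered Bernoulli weights, where the leading term in $\Psi(1,\alpha)$ is the essential supremum $b$ of the success probability and is $O(1)$, not $O(\sqrt\alpha)$. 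If you try to deduce the centered bound from the uncentered one by writing $\max_\pi\sum_\pi(I-q_j)\le \max_\pi\sum_\pi I - \min_\pi\sum_\pi q_j$, you pick up a spurious constant $b-\min_\ell q_\ell(s)$ of order $1$ --- precisely the failure mode you yourself identify for the uncentered comparison, and the row-dependent centering does \emph{not} automatically remove it. In fact, the statement you need (leading order $\mu^*=0$, correction $O(\sqrt\alpha)$ near the $x$-axis for finitely many centered distributions in a random environment) is nothing but a special case of Theorem \ref{1,a-thm} itself, so the layer-cake reduction is circular as it stands: it trades the general problem for the Bernoulli instance of the same problem. The sentence ``I would prove this by coupling with the exactly solvable strict/weak Bernoulli model'' is where a real idea is missing.

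The paper closes this gap in a different and cleaner way. After the mean shift, it couples the weights via $X(z)=\sum_\ell X_\ell(z)\,I\{F_{j}=H_\ell\}$ with $\{X_\ell(z)\}$ i.i.d.\ $H_\ell$-arrays independent of $\{F_j\}$, applies the elementary bound
\[
\Psi(1,\alpha)\le \sum_{\ell=1}^L \lim_n \tfrac1n\,\mE\max_{\pi}\sum_{z\in\pi}X_\ell(z)\,I\{F_{j}=H_\ell\},
\]
and then uses a one-site convexity/decoupling inequality (Lemma \ref{conv-lem}: for $\eta$ mean-zero, independent of $\cD$, and conditionally independent of $\xi$, $E[\xi\vee(\eta I_D)]\le E[\xi\vee\eta]$) to replace $X_\ell(z_0) I\{F_{j_0}=H_\ell\}$ by $X_\ell(z_0)$, one lattice site at a time. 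Iterating yields $\Psi(1,\alpha)\le\sum_\ell\Psi_{H_\ell}(1,\alpha)$, and Martin's homogeneous result finishes the job with $c_2=2\sum_\ell\sigma(H_\ell)$. This step replaces your entire Bernoulli-layer machinery and, crucially, does \emph{not} need any random-environment Bernoulli estimate at all. If you wish to keep your layer-cake architecture, the honest route is to first prove the centered Bernoulli case of the theorem by the paper's decoupling argument (which is particularly simple for two-valued weights) and only then integrate over levels; but at that point the decoupling lemma already gives the whole theorem directly, so the detour buys nothing.
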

We would expect  $c_1=c_2$  but our proof does not give it.

\medskip

Finally, we consider the case $\Psi(1,\alpha)$ for   the exponential model
where  some (partially) explicit calculation is possible.
 Here we see
how the tail of the random mean $\mu_0$ creates different orders
of magnitude for the   $\alpha$-dependent correction term.
   Let
  $\{\xi_j\}_{j\in\bZ_+}$ be an i.i.d.\ sequence of
random variables that satisfy  $0<c\leq \xi_j$ with common distribution $m$.
To distinguish the exponential model from the general one we write
 $G_j(x)=1-e^{-\xi_jx}$ for   the distribution function of the exponential
 distribution with parameter $\xi_j$,  and $\Psi_G$ for the limiting time constant.
 We
assume $c$ is the exact lower bound: $m[c,c+\e)>0$ for each $\e>0$.
Then the essential supremum of the random  mean is $\mu^*=c^{-1}$.

An implicit description of the limit shape was derived in
\cite{TimoKrug} by way of studying an exclusion process with random
jump rates attached to particles. We recall the result here. One
explicit shape is needed for the proof of Theorem \ref{thm:main}
also, so this result will serve there too.

 Define first a critical value
  $u^* = \int_{[c,\infty)}
\frac{c}{ \xi -c }\, m(d\xi)\in(0,\infty]$. For $0 \leq u < u^*$
define $a = a(u)$ implicitly by
      $$u =  \int_{[c,\infty)} \frac{a}{ \xi-a}\,m(d\xi). $$
$a(u)$ is strictly increasing, strictly concave,
continuously differentiable and one-to-one from
$0< u < u^*$ onto $0< a <c$. We let $a(u) = c$ for $u\geq u^*$
if $u^* < \infty$.
Then define  $g:\bR_+\to\bR_+$ by
\be g(y) = \sup_{u\geq 0}\{ -yu + a(u)\}, \quad y\ge0. \label{ga-dual}\ee
The function $g$  is monotone decreasing,  continuous,
and $g(y)=0$ for $y\ge a'(0+)=1/\mu_G$.  It is the level curve of the time constant.
The equations connecting the two are $g(y)=\inf\{ x>0: \Psi_G(x,y)\ge 1\}$
and
  \be \Psi_G(x,y) = \inf\{t \geq 0: tg({y}/{t}) \geq x\}. \label{PsiGg}\ee

Qualitative properties of the limit shape depend on the tail of the
distribution $m$ at $c+$, and transitions occur where the integrals
$\int_{[c,\infty)}  (\xi - c)^{-2}\, m(d\xi)$ and $\int_{[c,\infty)}
(\xi- c)^{-1}\, m(d\xi)$ blow up.  (For details see
\cite{TimoKrug}.) These same regimes appear in our results below.
For the case $\int_{[c,\infty)}(\xi - c)^{-2}\, m(d\xi) = \infty$ we
make a precise assumption about the tail of the distribution of the
random rate: \be \exists \; \nu\in[-1,1], \; \kappa>0 \ \ \text{such
that} \ \ \lim_{\xi \searrow c}
\frac{m[c,\xi)}{(\xi-c)^{\nu+1}}=\kappa. \label{exp-ass1}\ee The
value $\nu=-1$ means that the bottom rate $c$ has probability
$m\{c\}=\kappa>0$. Values $\nu<-1$ are of course not possible.

\begin{thm}  For the model with exponential distributions with i.i.d.\ random
rates the limit $\Psi_G$ has these asymptotics close to the $x$-axis.

{\sl Case 1:}   $\int_{[c,\infty)}  (\xi - c)^{-2}\, m(d\xi) <
\infty$. Then there exists $\alpha_0>0$ such that
  \be \Psi_G(1,\alpha) = {c}^{-1} + \alpha \int_{[c,\infty)}
\frac{1}{\xi-c} \,m(d\xi) \quad \text{for $\alpha\in[0,\alpha_0]$.}
\label{expcase1} \ee

{\sl Case 2:} \eqref{exp-ass1}  holds so that, in particular
$\int_{[c,\infty)}  (\xi - c)^{-2}\, m(d\xi) = \infty$. Then as
$\alpha \searrow 0$,
\begin{align}
&\text{if $\nu \in (0,1]$ then} \  \ \Psi_G(1,\alpha) = {c}^{-1}+
\alpha \int_{[c,\infty)} \frac{1}{\xi - c} \,m(d\xi) +o(\alpha)\,;
\label{casenu01} \\
  &\text{if $\nu =0$ then} \qquad   \Psi_G(1,\alpha) =  c^{-1} -\kappa\alpha\log\alpha
  +o(\alpha\log\alpha) \,;  \label{casenu0} \\
 &\text{if $\nu \in [-1,0)$ then} \   \
 \Psi_G(1,\alpha) = c^{-1} + B\alpha^{\frac1{1-\nu}} +o(\alpha^{\frac1{1-\nu}}).   \label{casenu-10}
 \end{align}
\label{exp-thm}\end{thm}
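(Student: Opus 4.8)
The plan is to extract the small-$\alpha$ behavior of $\Psi_G(1,\alpha)$ from the Legendre-type duality \eqref{PsiGg}, which reduces everything to the behavior of the function $g$ near $y=1/\mu_G=a'(0+)$, equivalently to the behavior of $a(u)$ for large $u$ (i.e.\ as $a\uparrow c$). First I would record the basic dictionary: from \eqref{PsiGg}, $\Psi_G(1,\alpha)=\inf\{t\ge0:\, t\,g(\alpha/t)\ge1\}$, so writing $t=\Psi_G(1,\alpha)$ we have the implicit equation $t\,g(\alpha/t)=1$. Since $g(y)\to c^{-1}$ as $y\downarrow 0$ (because $g(0)=\sup_u a(u)=c$ when $u^*$ may be infinite, and $a(u)\uparrow c$), one expects $t\to c^{-1}=\mu^*$, and the correction is governed by how fast $g(y)$ departs from $c^{-1}$ as $y\downarrow 0$. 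So the real content is an asymptotic expansion $g(y)=c^{-1}-\Delta(y)$ as $y\downarrow0$, from which a one-line fixed-point/inversion argument yields $\Psi_G(1,\alpha)=c^{-1}+\alpha\,c^2\,\Delta'(0+)$-type statements — the $\alpha\Delta$ asymptotics translate into the displayed formulas once $\Delta$ is identified in each regime.

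Next I would analyze $a(u)$ via its defining relation $u=\int_{[c,\infty)}\frac{a}{\xi-a}\,m(d\xi)$. Differentiating (or using strict concavity, already asserted in the excerpt) gives $a'(u)$ in terms of $\int (\xi-a)^{-2}m(d\xi)$, and the dual $g(y)=\sup_u\{-yu+a(u)\}$ is attained at the $u$ with $a'(u)=y$. In Case 1, $\int_{[c,\infty)}(\xi-c)^{-2}m(d\xi)<\infty$ forces $u^*<\infty$ with $a(u^*)=c$ reached at finite $u^*$ and, crucially, $a'(u^*-)=1/\mu_G - $ wait, more precisely $a$ hits its maximum $c$ at finite $u^*$ with a nonzero left-derivative; this produces a genuine \emph{linear} piece of $g$ near $0$: $g(y)=c-yu^*+o(y)$ is replaced by the exact affine identity on an interval, because for $y$ small the sup in \eqref{ga-dual} is attained at the boundary/kink $u=u^*$, giving $g(y)=c^{-1}+\alpha\!\int(\xi-c)^{-1}m(d\xi)$ on $[0,\alpha_0]$ — here one should double-check the exact constant $u^*=\int c(\xi-c)^{-1}m(d\xi)/c$ versus the stated $\int(\xi-c)^{-1}m(d\xi)$, and the factor is absorbed by the $t g(\alpha/t)=1$ normalization; I would verify \eqref{expcase1} by plugging the claimed affine $\Psi_G$ into $t\,g(\alpha/t)=1$ and checking consistency for $\alpha\le\alpha_0$. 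In Case 2, $\int(\xi-c)^{-2}m(d\xi)=\infty$ means $a(u)\uparrow c$ only as $u\to\infty$, so $g$ is strictly decreasing and smooth near $0$ with no linear piece, and the rate at which $a'(u)\downarrow 0$ is dictated by the tail assumption \eqref{exp-ass1}. Using \eqref{exp-ass1}, a Tauberian/Karamata computation gives the large-$u$ asymptotics of $c-a(u)$: roughly $\int\frac{a}{\xi-a}m(d\xi)$ near $a=c$ behaves like $c\int_c (\xi-c)^{-1}m(d\xi)$ which, by \eqref{exp-ass1}, is finite iff $\nu>0$, has a logarithmic divergence iff $\nu=0$, and diverges like $(c-a)^{\nu}$-ish iff $\nu\in[-1,0)$. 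Converting these into $\Delta(y)=c^{-1}-g(y)$ and then into $\alpha$-asymptotics via the fixed-point equation yields exactly \eqref{casenu01}, \eqref{casenu0}, \eqref{casenu-10}; the exponent $\frac1{1-\nu}$ and the logarithm at $\nu=0$ appear precisely from inverting a power/log relation between $y$ and $u$.

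The main obstacle I expect is the uniformity of the Tauberian estimates: \eqref{exp-ass1} is a one-sided regular-variation condition at $c+$, and I need not just leading order but the \emph{error terms} ($o(\alpha)$, $o(\alpha\log\alpha)$, $o(\alpha^{1/(1-\nu)})$) for $g$ and hence for $\Psi_G$. Extracting a clean $o(\cdot)$ from a bare limit hypothesis like \eqref{exp-ass1} (rather than a full expansion of $m$) requires care — one typically splits the integral $\int_{[c,\infty)}$ at $c+\delta$, controls the near part by \eqref{exp-ass1} and the far part by monotone/dominated convergence, then optimizes $\delta=\delta(\alpha)$. A secondary technical point is the boundary case $\nu=-1$ (an atom at $c$, $m\{c\}=\kappa$): here $\int(\xi-c)^{-1}m(d\xi)=\infty$ with a ``$\kappa/0$'' contribution, and one should check that the formula $\Psi_G(1,\alpha)=c^{-1}+B\alpha^{1/2}$ at $\nu=-1$ is consistent with the square-root boundary behavior of a single dominant exponential row with rate $c$ — indeed this is the link to Martin's $\sqrt\alpha$ result and to Theorem \ref{thm:main}, and getting the constant $B$ right at $\nu=-1$ is a good sanity check on the whole computation. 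Finally I would make sure the inversion step ($t\,g(\alpha/t)=1\Rightarrow t=\dots$) is justified rigorously, using the monotonicity and continuity of $g$ guaranteed in the excerpt, so that the implicit equation has a unique solution and perturbative inversion is legitimate.
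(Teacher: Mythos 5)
Your proposal follows the same overall route as the paper: express $\Psi_G(1,\alpha)=t$ through the duality \eqref{PsiGg} and the implicit equation $t\,g(\alpha/t)=1$, in Case 1 exploit the linear piece $g(y)=-yu^*+c$ coming from the kink at $u^*<\infty$, and in Case 2 read off the asymptotics of $c-a_0$ from the tail hypothesis \eqref{exp-ass1} by cutting the integral at $c+\e$ and optimizing $\e=\e(\alpha)$. The one thing worth flagging is that you propose to route Case 2 through the pair $(u,a(u))$ and a Tauberian inversion, whereas the paper short-circuits this by combining $\int\frac{\xi}{(\xi-a_0)^2}\,m(d\xi)=t/\alpha$ with $-\alpha u_0+ta_0=1$ to produce the single clean relation $1=\alpha\,a_0^2\int(\xi-a_0)^{-2}\,m(d\xi)$ directly between $a_0$ and $\alpha$; this makes the exponent $\tfrac1{1-\nu}$ and the logarithm at $\nu=0$ drop out of a binomial expansion without any separate inversion step. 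Also two small slips: $g(y)\to c$ (not $c^{-1}$) as $y\downarrow 0$, and the Case 1 display $g(y)=c^{-1}+\alpha\int(\xi-c)^{-1}m(d\xi)$ is really the formula for $\Psi_G(1,\alpha)$, not for $g$; neither affects the plan.
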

In statement \eqref{casenu-10} above $B=B(c,\kappa,\nu)$ is a constant
whose explicit definition is in equation \eqref{exp18} in the proof section below.
The extreme case $\nu=-1$ is the one that matches up with Theorem \ref{1,a-thm}.

For some heuristic understanding of Theorem \ref{exp-thm} we turn to the queueing interpretation
discussed in the Introduction.
 Quantity  $n\Psi(1,\alpha)$ represents the time when customer $n$ departs from
server $\fl{n\alpha}$ (rigorously speaking in the $n\to\infty$ limit),  when initially all customers are queued up at server $0$.
When  $u^*<\infty$ (Case 1 and subcase $\nu>0$ from Case 2) the results of
\cite{andj-etal} suggest that, at some distance but not too far from the first queue,
 the   system should converge to an equilibrium
where the queue length at server $j$  (whose service rate is
$\xi_j$) is geometric with mean $c/(\xi_j-c)$  and the departure
process from each queue has rate $c$. A customer arriving at a queue
with this geometric number of customers present  spends on average
time $1/(\xi_j-c)$  at that queue.  In Case 1 this picture is
precise enough so that equation \eqref{expcase1} can be   naively
understood in these terms:     a single customer travels through
$\fl{n\alpha}$ servers   in  time $  n\alpha \int_{[c,\infty)}
\frac{1}{\xi-c} \,m(d\xi)$. After this it takes another $n/c$ time
to see $n$ customers go through server $\fl{n\alpha}$.  Together
these terms make up the right-hand side of \eqref{expcase1}.   This
argument requires  a high density of customers  because once the
customer density drops below $u^*$ the system chooses an equilibrium
with flow rate below $c$.  (Again, for details we refer to
\cite{andj-etal}.)

This point can be detected in the proofs for Case 2  where we find a parameter $a_0$ that
in some sense represents a  flow rate and replaces $c$ in \eqref{expcase1}
(see eqn.~\eqref{case2}).
The formulas in Case 2 are then obtained by   estimating   $c-a_0$.

%%%%%%%%BERNOULLI%%%%%%%%%
\section{Bernoulli   models with strict-weak paths  in a random environment} \label{sec:bernoulli}

This section  looks at   last-passage models with Bernoulli-distributed
weights.  The environment is now an i.i.d.\ sequence $\{p_j\}_{j\in\bZ_+}$ of
numbers $p_j\in[0,1]$, with distribution $\bP$.  Given $\{p_j\}$,  the weights $\{X(i,j)\}$ are independent
with marginal distributions
$P(X(i,j)=1)=p_j=1-P(X(i,j)=0)$.  We consider two last-passage times
that differ by the type of admissible path:  for $z_1, z_2\in\bZ_+^2$
\be
T_{\rightarrow}(z_1, z_2) = \max_{\pi \in \Pi_{\rightarrow}(z_1,z_2)}  \sum_{ z\in \pi} X(z)
\quad\text{and}\quad
T_{\uparrow}(z_1, z_2) = \max_{\pi \in \Pi_{\uparrow}(z_1,z_2)}  \sum_{ z\in \pi} X(z).
\label{bernT}\ee
 In terms of coordinates denote the endpoints by  $z_k=(a_k,b_k)$, $k=1,2$.  Then admissible paths
$\pi\in\Pi_{\rightarrow}(z_1,z_2)$ are  of the form
$\pi=\{(a_1,y_0), (a_1+1,y_1), (a_1+2,y_2),\dotsc,
(a_2,y_{a_2-a_1})\}$ with $b_1\le y_0\le y_1\le \dotsm\le
y_{a_2-a_1}\le b_2$,  while paths $\pi\in \Pi_{\uparrow}(z_1,z_2)$
are of the form $\pi=\{(x_0, b_1), (x_1, b_1+1),\dotsc,
(x_{b_2-b_1}, b_2)\}$ with $a_1\le x_0\le x_1\le \dotsm\le
x_{b_2-b_1}\le a_2$. Thus paths in $\Pi_{\rightarrow}(z_1,z_2)$
increase strictly in the $x$-direction while those in
$\Pi_{\uparrow}(z_1,z_2)$    increase strictly in the $y$-direction.
The last-passage times   $T_{\rightarrow}(z_1, z_2)$  and
$T_{\uparrow}(z_1, z_2)$ record the maximal weights of such paths in
the lattice rectangle $([a_1,a_2]\times[b_1,b_2])\cap\bZ_+^2$. The
following figures illustrate the two types of admissible paths when
we take $z_1 = (0,0)$ and $z_2 = (5,5)$:

\begin{figure}[h!]
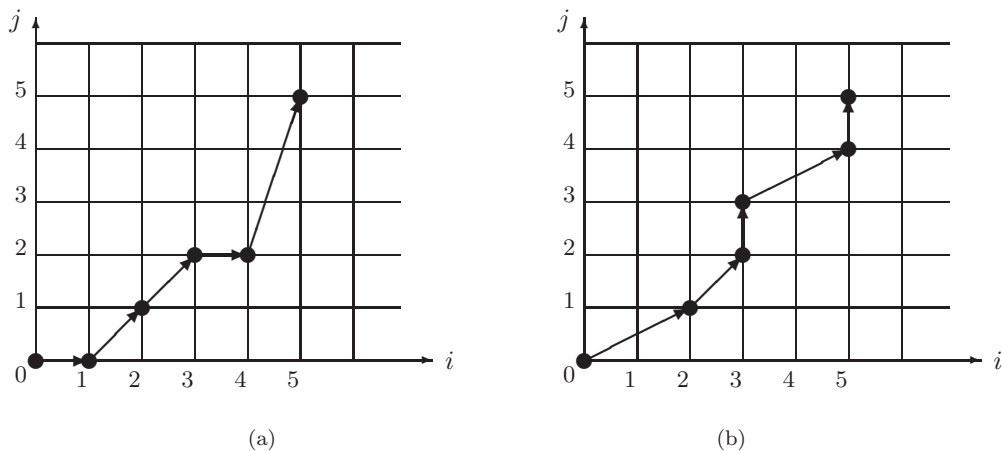

 \subfigure[]{\put(-80,20){\vector(1,0){150}}
\put(-80,20){\vector(0,1){130}}
\multiput(-80,40)(0,20){6}{\line(1,0){138}}
\multiput(-60,20)(20,0){6}{\line(0,1){120}}
\multiput(-80,20)(20,0){2}{\circle*{6}}
\multiput(-40,40)(0,20){1}{\circle*{6}}
\multiput(-20,60)(20,0){2}{\circle*{6}} \put(20,120){\circle*{6}}
\thicklines \multiput(-80,20)(20,0){1}{\vector(1,0){20}}
\put(-60,20){\vector(1,1){20}} \put(-40,40){\vector(1,1){20}}
\put(-20,60){\vector(1,0){20}} \put(0,60){\vector(1,3){20}}
\put(74.7,17){$i$} \put(-90,146.5){$j$} \put(-88,12){\small 0}
\put(-65,10){\small 1} \put(-45,10){\small 2} \put(-25,10){\small 3}
\put(-5,10){\small 4}\put(15,10){\small 5} \put(-88,40){\small 1}
\put(-88,60){\small 2} \put(-88,80){\small 3} \put(-88,100){\small
4}\put(-88,120){\small 5}} \hspace{60mm} \subfigure[]{
\put(-50,20){\vector(1,0){150}} \put(-50,20){\vector(0,1){130}}
\multiput(-50,40)(0,20){6}{\line(1,0){138}}
\multiput(-30,20)(20,0){6}{\line(0,1){120}}
\multiput(-50,20)(20,0){1}{\circle*{6}}
\multiput(-10,40)(0,20){1}{\circle*{6}}
\multiput(10,60)(20,0){1}{\circle*{6}} \put(10,80){\circle*{6}}
\put(50,100){\circle*{6}} \put(50,120){\circle*{6}} \thicklines
\put(-50,20){\vector(2,1){40}}
 \put(-10,40){\vector(1,1){20}}
\put(10,60){\vector(0,1){20}} \put(10,80){\vector(2,1){40}}
\put(50,100){\vector(0,1){20}} \put(104.7,17){$i$}
\put(-60,146.5){$j$} \put(-58,12){\small 0} \put(-35,10){\small 1}
\put(-15,10){\small 2} \put(5,10){\small 3} \put(25,10){\small
4}\put(45,10){\small 5} \put(-58,40){\small 1} \put(-58,60){\small
2} \put(-58,80){\small 3} \put(-58,100){\small
4}\put(-58,120){\small 5}} \caption{Admissible paths in
$\Pi_\rightarrow(z_1,z_2)$ and $\Pi_\uparrow(z_1,z_2)$}
\label{bernpath1}
\end{figure}%

 As before we simplify   notation with  $T_{\rightarrow}(0, z)= T_{\rightarrow}(z)$.
  The almost sure limits are denoted by
\be
 \Psi_{\rightarrow}(x,y) = \lim_{n\to \infty}\frac{1}{n} T_{\rightarrow}(\lfloor
nx \rfloor, \lfloor ny \rfloor)
\quad\text{and}\quad
 \Psi_{\uparrow}(x,y) = \lim_{n\to \infty}\frac{1}{n} T_{\uparrow}(\lfloor
nx \rfloor, \lfloor ny \rfloor)
\label{bernPsi}\ee
 for $(x,y)\in(0,\infty)^2$.
The existence of the limits needs no further comment.

The next theorem gives the
explicit  limits.  \eqref{bernoulli5} is the same as   in \cite[Thm.~1]{GTW2002}.
Inside the $\bE(\,\dotsm)$ expectations below $p$ is the random
Bernoulli probability.  Let $b=\underset{\bP}{\esssup}\,p$  denote the maximal probability.

\begin{thm}
The limits in \eqref{bernPsi} are as follows for $x,y\in(0,\infty)$.
  \be \label{bernoulli5} \Psi_{\rightarrow} (x,y) =
\begin{cases}
 bx + y(1-b) \bE \bigl(  \frac{p }{b-p}\bigr), & {x}/{y} \ge
\bE\bigl( \frac{p(1-p) }{(b - p)^2}\bigr)\\[6pt]
 yz_0^2\bE\bigl(  \frac{1-p }{(z_0 -  p)^2} \bigr) -y,
 &\bE\bigl( \frac{p }{1-p}\bigr) < {x}/{y} < \bE\bigl( \frac{p(1-p) }{(b-p)^2}\bigr)\\[6pt]
x, &  0< {x}/{y} \le \bE\bigl( \frac{p }{1-p}\bigr)
\end{cases}
\ee with $z_0\in(b,1)$ uniquely defined by the equation
$${x}/{y}=   \,\bE \Bigl[ \frac{  p(1-p)}{(z_0-  p)^2}\Bigr].$$

\be \label{bernoulli6} \Psi_{\uparrow} (x,y) = \begin{cases}
%\bar p y&x=0 \\
y-y z_0^2
\bE\bigl( \frac{1-p}{(z_0 +p)^2} \bigr), & 0 < {x}/{y} <
\bE\bigl(\frac{1-p}{p} \bigr)\\[6pt]
y, & {x}/{y} \ge \bE\bigl( \frac{1-p}{p} \bigr)
\end{cases}\ee
with $z_0\in(0,\infty)$ uniquely defined by the equation
$${x}/{y} = \bE \Bigl[   \frac{p(1-p)}{(z_0 +p)^2} \Bigr]. $$
\label{thm:bernoulli}\end{thm}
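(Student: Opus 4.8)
\emph{Proof proposal.}
The plan is to follow the method of \cite{Timo1998}: realize the Bernoulli last-passage array as a function of a totally asymmetric interacting particle system in the random environment $\{p_j\}$, exploit the fact that this particle system has an explicit one-parameter family of product-form stationary distributions, and then use convex duality to read off $\Psi_\rightarrow$ (and, separately, $\Psi_\uparrow$) from the a.s.\ speed of a tagged particle in stationarity. Existence, homogeneity, concavity and continuity of the two limits in \eqref{bernPsi} are standard and available with no extra work since the weights are bounded; the content of the theorem is the \emph{identification} of the shapes, so it suffices to evaluate $\Psi_\rightarrow$ and $\Psi_\uparrow$ along a sufficiently rich family of rays and then invoke concavity and $1$-homogeneity to fill in the rest of the quadrant.

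\emph{The coupling and the stationary measures.} Writing the recursion for $T_\rightarrow$ as a growth equation for an interface over $\bZ$, the column-to-column increments of the interface form a particle configuration, and the strict/weak update rule translates into a totally asymmetric dynamics in which the row index $j$ carries a disorder parameter $p_j$ sitting on sites (for $T_\rightarrow$), respectively on particles (for $T_\uparrow$). Because the weights are Bernoulli one verifies directly that for each real $z$ in the relevant range the \emph{inhomogeneous} product measure $\nu_z=\bigotimes_j\mu_j^{(z)}$, whose $j$-th marginal is an explicit geometric/Bernoulli law with parameters built from $p_j$ and $z$, is invariant for the dynamics. This single computation is the only place the random environment enters essentially, and it is what forces the row-averages $\bE(\,\cdot\,)$ into the answer: starting the last-passage array from $\nu_z$, the macroscopic current of $\nu_z$ and the speed of a tagged particle are obtained from the $z$-dependent marginals by the ergodic theorem along rows, producing the quantities $\bE[p/(b-p)]$, $\bE[p(1-p)/(z-p)^2]$, and so on. Differentiating the current in $z$ gives the characteristic direction associated with density $z$, which is precisely the relation $x/y=\bE[p(1-p)/(z_0-p)^2]$ of \eqref{bernoulli5} (and $x/y=\bE[p(1-p)/(z_0+p)^2]$ of \eqref{bernoulli6}).

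\emph{From current to shape, and the phase boundaries.} For each $z\in(b,1)$ the stationary computation pins down $\Psi_\rightarrow$, or equivalently its gradient, along the single ray of slope $\bE[p(1-p)/(z_0-p)^2]$; by concavity and $1$-homogeneity $\Psi_\rightarrow$ is then determined on the whole cone swept by these rays through the envelope/Legendre relation, and a short manipulation turns this into the middle line of \eqref{bernoulli5}. Since $z\mapsto\bE[p(1-p)/(z-p)^2]$ is continuous and strictly decreasing on $(b,1)$, with limit $\bE[p/(1-p)]$ as $z\uparrow 1$ and limit $\bE[p(1-p)/(b-p)^2]\in(0,\infty]$ as $z\downarrow b$, this cone is exactly $\bE[p/(1-p)]<x/y<\bE[p(1-p)/(b-p)^2]$; letting $z_0\uparrow 1$ collapses the middle formula to $\Psi_\rightarrow=x$ and letting $z_0\downarrow b$ collapses it to the linear expression $bx+y(1-b)\bE[p/(b-p)]$, and one checks that these limits agree with the two outer cases, so $\Psi_\rightarrow$ extends continuously across the thresholds. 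The two extreme regimes are linear and must be argued separately: for $x/y\le\bE[p/(1-p)]$ one exhibits a path collecting a $1$ in essentially every column (possible because with few columns relative to rows there is ample room to steer into $1$'s) together with the trivial upper bound, while for $x/y\ge\bE[p(1-p)/(b-p)^2]$ one shows that the shape is governed by the rows with $p_j$ near $b=\esssup_\bP p$, the path being free to concentrate in those best rows — this is exactly where the essential supremum enters — and one matches a localization lower bound against the $z_0\downarrow b$ limit of the stationary picture. The case $T_\uparrow$ is entirely parallel with the disorder placed on particles instead of sites; the parameter now runs over $z_0\in(0,\infty)$, the only degeneration is $z_0\uparrow\infty$ giving the flat piece $\Psi_\uparrow=y$, and no essential supremum appears because a $\uparrow$-path is forced to visit every row and hence cannot avoid the slow ones — which is also the structural reason $\Psi_\rightarrow$ and $\Psi_\uparrow$ have to be handled separately.

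\emph{Main obstacle.} The hard part is the first half: correctly identifying the particle system attached to each of the two strict/weak Bernoulli models and proving that the inhomogeneous product measures $\nu_z$ are stationary — i.e.\ checking the algebra of the update rule against the particle dynamics in the presence of row disorder — and then making the current-to-shape duality rigorous in the random environment, using ergodicity along rows and boundedness of the weights in place of the heavier tail hypotheses needed elsewhere in the paper. Once the current function is in hand, the convex analysis and the continuous matching at the two phase boundaries are routine.
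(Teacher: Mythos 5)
Your high-level plan matches the paper's: couple the strict/weak Bernoulli last-passage arrays with discrete-time particle systems as in \cite{Timo1998}, exploit explicit product-form invariant measures, and recover $\Psi_\rightarrow,\Psi_\uparrow$ from the tagged-particle speed by monotone conjugacy. Two points, however, are off.

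First, the structure of the invariant measure is not what you describe. You posit an \emph{inhomogeneous} product $\nu_z=\bigotimes_j\mu_j^{(z)}$ whose $j$-th marginal depends on $p_j$, and say this is ``the only place the random environment enters essentially.'' In the paper's construction, the invariant measure for the gap process $\{\eta_i(t)\}$ is a \emph{spatially homogeneous} i.i.d.\ geometric with a single parameter $u$ (see \eqref{berninv}), with $u\in[1,b^{-1})$ for $T_\rightarrow$ and $u\in\bR_+$ for $T_\uparrow$. The environment $\{p_j\}$ enters through \emph{time-inhomogeneity} of the transition kernel, because time $t$ of the particle system is the row index $j$ of the lattice; the quenched tagged-particle jump law at time $t$ depends on $p_t$ (see \eqref{bern-step-distr}). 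The expectations $\bE[\cdots]$ in \eqref{bernoulli5}--\eqref{bernoulli6} then come from the ergodic theorem over time applied to $x_0(t)$, as in \eqref{bernf}, not from integrating site-dependent stationary marginals. The ``disorder on sites/particles'' picture with inhomogeneous product invariant measures is the correct one for the exponential TASEP with particlewise disorder of \cite{TimoKrug}, which the paper uses elsewhere for $\Psi_G$, but it is not the mechanism for the Bernoulli strict/weak models. It would also be inconsistent for you to simultaneously invoke ``ergodicity along rows'' to produce the disorder averages, as you do: if the disorder were spatial you would average over particles/sites, not rows.

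Second, the two linear regimes of \eqref{bernoulli5} do not require the separate ad hoc arguments you sketch (``steering into $1$'s'' for small $x/y$, ``localizing in best rows'' for large $x/y$). Once $f(u)=\bE\bigl[pu(u-1)/(1-up)\bigr]$ is identified on $u\in[1,1/b]$ and extended in the natural way ($f=0$ on $[0,1)$, $f=\infty$ on $(1/b,\infty)$), the conjugacy $g(x)=\sup_{1\le u\le 1/b}\{xu-f(u)\}$ produces all three cases at once: the middle regime is where the supremum is attained at an interior $u_0$, and the two linear pieces are exactly where it is attained at the endpoints $u=1$ and $u=1/b$. In particular, the essential supremum $b$ enters \eqref{bernoulli5} solely through the constraint $u\le 1/b$ on the admissible range of the geometric parameter (forced by the requirement $1-up_t\ge 0$ in \eqref{bern-step-distr}); no localization estimate is needed. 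Your heuristic explanation of why $b$ appears for $\Psi_\rightarrow$ but not $\Psi_\uparrow$ (a $\rightarrow$-path may avoid rows, a $\uparrow$-path cannot) is a fine intuition, but the proof does not proceed that way.
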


Our second result gives  simplified  bounds   that are    useful for the proof
of the main result Theorem \ref{thm:main}.    Let $\bar p=\bE(p)$ be the mean of the environment.
  $\Psi(x,y)$ is the limiting time constant with weakly increasing paths defined
in Proposition \ref{pr:limit}.

\begin{thm}
The following three inequalities hold for the Bernoulli model:
\begin{equation}
\label{bernoulli1} \Psi_{\rightarrow} (x,y)  \leq bx + 2
\sqrt{\bar{p}(1-b) xy },
\end{equation}
\begin{equation}
\label{bernoulli2}
 \Psi_{\uparrow} (x,y) \leq \bar{p}y + 2 \sqrt{\bar{p}(1-\bar{p})xy}
\end{equation}
and
\begin{equation}
\label{bernoulli3}
 \Psi (x,y)\leq \bar{p}y + 4 \sqrt{\bar{p}(1-\bar{p})xy} + bx.
\end{equation}
\label{thm:bernoulli2}\end{thm}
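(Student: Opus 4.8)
\emph{Proof proposal.}
The plan is to obtain \eqref{bernoulli1} and \eqref{bernoulli2} directly from the exact formulas \eqref{bernoulli5} and \eqref{bernoulli6} in Theorem~\ref{thm:bernoulli}, working regime by regime, and then to deduce \eqref{bernoulli3} from the first two by splitting a weakly nondecreasing path into a $\Pi_{\rightarrow}$-path and a $\Pi_{\uparrow}$-path. Throughout I write $\rho=x/y$ and use homogeneity to divide the formulas by $y>0$; the cases $x=0$ or $y=0$ are trivial. Inside expectations $p$ is the random Bernoulli parameter, $\bar p=\bE(p)$, $b=\esssup_{\bP}p$, so $0\le p\le b$ a.s.

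For \eqref{bernoulli1}, I treat the three regimes of \eqref{bernoulli5}. In the third, $\Psi_{\rightarrow}(x,y)=x$ and $\rho\le\bE\bigl(\frac p{1-p}\bigr)\le\frac{\bar p}{1-b}$ (last step since $\frac p{1-p}\le\frac p{1-b}$), hence $(1-b)x\le\bar py$, $\sqrt{\bar p(1-b)xy}\ge(1-b)x$, and $bx+2\sqrt{\bar p(1-b)xy}\ge(2-b)x\ge x$. For the middle regime, where $z_0\in(b,1)$ and $\rho=\bE\bigl(\frac{p(1-p)}{(z_0-p)^2}\bigr)$, the first step is to rewrite the shape: substituting $z_0^2=(z_0-p)^2+2p(z_0-p)+p^2$ into $z_0^2\bE\bigl(\frac{1-p}{(z_0-p)^2}\bigr)$, using the defining equation for $z_0$ to eliminate $\bE\bigl(\frac{p^2(1-p)}{(z_0-p)^2}\bigr)$, and writing $1-p=(1-z_0)+(z_0-p)$, one gets
\[
\tfrac1y\,\Psi_{\rightarrow}(x,y)\;=\;b\rho\;+\;(z_0-b)\rho\;+\;(1-z_0)\,\bE\!\Bigl(\tfrac{p}{z_0-p}\Bigr).
\]
I then bound each of the two correction terms by $\sqrt{\bar p(1-b)\rho}$. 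For $(1-z_0)\bE\bigl(\frac p{z_0-p}\bigr)$: Cauchy--Schwarz gives $\bE\bigl(\frac p{z_0-p}\bigr)^2\le\bar p\,\bE\bigl(\frac p{(z_0-p)^2}\bigr)$, while $1-p=(1-z_0)+(z_0-p)$ gives $\rho=(1-z_0)\bE\bigl(\frac p{(z_0-p)^2}\bigr)+\bE\bigl(\frac p{z_0-p}\bigr)$, so $(1-z_0)^2\bE\bigl(\frac p{(z_0-p)^2}\bigr)\le(1-z_0)\rho\le(1-b)\rho$; multiplying the two yields the claim. For $(z_0-b)\rho$: write $(z_0-b)^2\rho=\bE\bigl(\bigl(\tfrac{z_0-b}{z_0-p}\bigr)^2p(1-p)\bigr)$ and note that $z_0\mapsto\bigl(\tfrac{z_0-b}{z_0-p}\bigr)^2$ is nondecreasing on $[b,1]$ for every fixed $p\le b$; hence $(z_0-b)^2\rho\le(1-b)^2\bE\bigl(\tfrac p{1-p}\bigr)\le(1-b)\bar p$, so $(z_0-b)\rho\le\sqrt{\bar p(1-b)\rho}$. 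Adding the two bounds gives \eqref{bernoulli1} here. The first regime occurs only when $R:=\bE\bigl(\frac{p(1-p)}{(b-p)^2}\bigr)<\infty$, and there $\tfrac1y\Psi_{\rightarrow}(x,y)-b\rho=(1-b)\bE\bigl(\frac p{b-p}\bigr)$ does not depend on $\rho\ge R$, so it suffices to check \eqref{bernoulli1} at $\rho=R$, which is the $z_0\to b$ limit of the middle-regime estimate.

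For \eqref{bernoulli2} I treat the two regimes of \eqref{bernoulli6}. When $\rho\ge\bE\bigl(\frac{1-p}p\bigr)$ we have $\Psi_{\uparrow}(x,y)=y$, and Jensen gives $\rho\ge\bE\bigl(\frac1p\bigr)-1\ge\frac1{\bar p}-1$, whence $(1-\bar p)y\le\bar px$ and $\bar py+2\sqrt{\bar p(1-\bar p)xy}\ge(2-\bar p)y\ge y$. In the other regime, with $z_0\in(0,\infty)$ and $\rho=\bE\bigl(\frac{p(1-p)}{(z_0+p)^2}\bigr)$, the same algebra (now with $z_0^2=(z_0+p)^2-2p(z_0+p)+p^2$) gives
\[
\tfrac1y\,\Psi_{\uparrow}(x,y)\;=\;\bar p\;+\;2\,\bE\!\Bigl(\tfrac{p(1-p)}{z_0+p}\Bigr)\;-\;\bE\!\Bigl(\tfrac{p^2(1-p)}{(z_0+p)^2}\Bigr)\;\le\;\bar p+2\,\bE\!\Bigl(\tfrac{p(1-p)}{z_0+p}\Bigr),
\]
and Cauchy--Schwarz bounds the last expectation by $\bigl(\rho\,\bE(p(1-p))\bigr)^{1/2}\le\bigl(\rho\,\bar p(1-\bar p)\bigr)^{1/2}$ since $\bE(p(1-p))=\bar p-\bE(p^2)\le\bar p-\bar p^2$; this is \eqref{bernoulli2}. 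For \eqref{bernoulli3} I would first show $\Psi(x,y)\le\Psi_{\rightarrow}(x,y)+\Psi_{\uparrow}(x,y)$: given a weakly nondecreasing path $\pi$ from $0$ to $(N,M)$, color each vertex other than $0$ red if $\pi$ enters it by a horizontal step and blue if by a vertical step; the $N$ red vertices occupy columns $1,\dots,N$ with weakly increasing heights, hence form an admissible path for $T_{\rightarrow}$, and symmetrically the $M$ blue vertices form an admissible path for $T_{\uparrow}$, so $\sum_{z\in\pi}X(z)\le X(0)+T_{\rightarrow}(N,M)+T_{\uparrow}(N,M)$. Maximizing over $\pi$, taking $(N,M)=(\fl{nx},\fl{ny})$, dividing by $n$ and letting $n\to\infty$ (so $X(0)/n\to0$) gives $\Psi\le\Psi_{\rightarrow}+\Psi_{\uparrow}$, and combining with \eqref{bernoulli1}, \eqref{bernoulli2} and $1-b\le1-\bar p$ (to enlarge the smaller square root) yields \eqref{bernoulli3}.

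The step I expect to be the real obstacle is the middle regime of \eqref{bernoulli1}: one has to find the two-term splitting of the shape displayed above and, crucially, notice the monotonicity of $(z_0-b)^2\rho$ in $z_0$. A direct Cauchy--Schwarz bound on $(z_0-b)\rho$ would require $\bE(p(1-p))\le\bar p(1-b)$, which is false in general — the inequality runs the other way unless $p$ takes only the values $0$ and $b$ — so the comparison must be routed through the value of $(z_0-b)^2\rho$ at the endpoint $z_0=1$ instead.
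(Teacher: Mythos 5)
Your proposal is correct, but it follows a genuinely different route from the paper. The paper does \emph{not} extract \eqref{bernoulli1} and \eqref{bernoulli2} from the explicit shape formulas at all: it works inside the dual picture. For \eqref{bernoulli1} it replaces the speed function $f(u)=\bE\bigl[\tfrac{pu(u-1)}{1-up}\bigr]$ by the larger $\tilde f(u)=\tfrac{u(u-1)}{1-ub}\bar p$ (using $1-up\ge 1-ub$), so that $g(x)\ge\sup_u\{xu-\tilde f(u)\}$, and then solves this one-parameter optimization in closed form; for \eqref{bernoulli2} it instead uses Jensen and concavity of $p\mapsto p/(1+up)$ to replace $f$ by $\bar p\,u(u+1)/(1+u\bar p)$, again reducing to a scalar maximization. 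In both cases the random environment is collapsed to a single effective Bernoulli parameter before the supremum is taken, and the resulting $g^{-1}$ is computed in one stroke, with no case split on the shape's three (resp.\ two) regimes. Your argument instead starts from \eqref{bernoulli5}--\eqref{bernoulli6}, re-expands $z_0^2$ around $z_0\mp p$, eliminates the quadratic term via the defining equation for $z_0$, and bounds the resulting correction terms by Cauchy--Schwarz plus the monotonicity of $z_0\mapsto\bigl(\tfrac{z_0-b}{z_0-p}\bigr)^2$ — the last observation being essential precisely because $\bE[p(1-p)]\ge\bar p(1-b)$ runs the wrong way, as you correctly note. The paper's route is shorter and sidesteps the case analysis entirely; yours is more elementary and makes explicit the near-tightness of the bound at the regime boundary $z_0=b$, which the dual argument hides. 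Finally, your coloring proof of $\Psi\le\Psi_{\rightarrow}+\Psi_{\uparrow}$ supplies the one detail the paper states without argument; it is fine, noting as you implicitly do that the red (resp.\ blue) vertices only sweep columns (rows) $1,\dots,N$ (resp.\ $1,\dots,M$), which is harmless since weights are nonnegative.
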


\eqref{bernoulli3}  follows from   \eqref{bernoulli1} and \eqref{bernoulli2}   because
$ \Psi (x,y) \leq \Psi_{\rightarrow} (x,y)+\Psi_{\uparrow} (x,y)$.
 Another loose estimate we will use later following
\eqref{bernoulli3} is
\begin{equation}
\label{bernoulli4} \Psi (x,y) \leq \bar{p}y + 4
\sqrt{\bar{p}(1-\bar{p})xy} + bx \leq (y+4\sqrt{xy})\sqrt{\bar{p}} +
bx.
\end{equation}

We prove the formulas and inequalities first for  $\Psi_{\rightarrow}$ and then
for $\Psi_{\uparrow}$.  For some parts of  the proofs it is convenient to  assume   $b< 1$.
 Results for the case  $b=1$ follow by taking a limit.

 \begin{proof}[Proof of \eqref{bernoulli5} and \eqref{bernoulli1}]
 We   adapt    the proof  from  \cite{Timo1998} to the random environment situation
and sketch the main points.

Consider now the environment $\{p_j\}$ fixed, but the weights $X(i,j)$ random.
For integers $0\le s<t$ and $a, k$ define  an inverse to the last passage time as
 $$\Gamma ((a,s), k, t) = \min\{l \in \bZ_+: T_{\rightarrow}((a+1,s+1),(a+l, t)) \geq k \}.$$
 Note that $\Gamma ((a,s), 0, t) =0$ but $\Gamma ((a,s), k, t) >0$ for $k>0$.
 Knowing the limits of the variables $\Gamma$ is the same as knowing
 $\Psi_\rightarrow$.  By the homogeneity of $\Psi_\rightarrow$  it is enough to find $h(x)=\Psi_\rightarrow(x,1)$.  By the homogeneity and superadditivity of  $\Psi_\rightarrow$,
 $h$ is concave and nondecreasing.
Let $g$ be the inverse function of $h$ on $\bR_+$.   Then $g$ is convex
and nondecreasing, and
\[  tg(x/t)= \lim_{n\to\infty} \frac1n \Gamma((0,0), \fl{nx}, \fl{nt} ).  \]

To find these functions
we construct an exclusion-type  process $z(t)=\{z_k(t): k\in\bZ\}$ of labeled, ordered  particles
$z_k(t)<z_{k+1}(t)$ that jump leftward on the lattice $\bZ$, in discrete time $t\in\bZ_+$.
Given an initial configuration  $\{z_i(0)\}$ that satisfies
$z_{i-1}(0) \leq z_i(0)-1 $  and
 $\liminf_{i \rightarrow -\infty} |i|^{-1} z_i(0) > -1/b$,  the evolution is defined by
 \be z_k(t) = \inf_{i: i
\leq k} \{ z_i(0)  + \Gamma((z_i(0),0),k-i, t )\}, \quad k\in\bZ, \, t\in\bN. \label{bernz}\ee
It can be checked that  $z(t)$ is a  well-defined Markov process, in particular that
  $z_k(t) > -\infty$ almost surely.

Define the process $\{\eta_i(t)\}$  of interparticle distances by
  $\eta_i (t)= z_{i+1}(t) -z_{i}(t)$ for   $i\in \bZ$ and $t\in
\bZ_+$. By Prop.~1 in \cite{Timo1998} process $\{\eta_i(t)\}$  has a family of
i.i.d.\ geometric invariant distributions indexed by the mean  $u\in [1, {b}^{-1})$
and defined by
\be P(\eta_i = n)  = u^{-1} (1-u^{-1})^{n-1}, \quad n\in\bN. \label{berninv}\ee
 Let  $x_k(t) = z_k(t-1) - z_{k}(t)\ge 0$  be the absolute size of the jump of
the $k$th particle from time $t-1$ to $t$, and let $q_t=1-p_t$.   From
(6.5) in \cite{Timo1998}, in the stationary process \be P(x_k(t) = x) =
\begin{cases}
        (1-up_t) q_t^{-1} &
        x=0\\
         p_t(1-up_t) q_t^{-1}(u-1)^x(uq_t)^{-x} & x=1,2,3,\dotsc\\
\end{cases}\label{bern-step-distr}\ee

We track the motion of particle $z_0(t)$ in a stationary situation.  The initial state
is defined by setting
$z_0(0)=0$ and by letting $\{\eta_i(0)\}$ be i.i.d.\ with common distribution \eqref{berninv}.
With $k=0$, divide by $t$ in \eqref{bernz}  and take   $t\to\infty$.  Apply laws
of large numbers inside the braces in \eqref{bernz}, with some simple estimation to pass the limit
through the infimum, to find  the average speed of the tagged particle:
\be -\, \lim_{t \rightarrow \infty} \frac{1}{t}  z_0(t)
 =\sup_{x\ge 0} \{ ux-g(x)\} \equiv f(u). \label{bern-lim-8}\ee
The last equality defines the speed $f$ as
   $f=g^+$, the {\sl monotone conjugate} of $g$.
   It is natural to set $f(u)=0$ for $u\in[0,1)$,  $f(b^{-1})=f((b^{-1})-)$,  and $f(u)=\infty$ for $u>b^{-1}$.
  By \cite[Thm.~12.4]{rock-ca}
\be
 g(x) = \sup_{u \geq 0} \{xu - g^+(u)\} = \sup_{1\leq u \leq {1}/{b}} \{xu - f(u)\}.
 \label{berndual} \ee

Since $z_0(t)$ is a sum of  jumps $x_0(k)$ with distribution \eqref{bern-step-distr} we have
the second moment bound $\sup_{t\in\bN}  \mE[(t^{-1} z_0(t))^2]<\infty$, and
consequently the limit  in \eqref{bern-lim-8} holds also in expectation.
From this
\be\begin{aligned}
f(u)&=  -\,\lim_{t \rightarrow \infty}\mE[  {t}^{-1}  z_0(t)]
= \lim_{t \rightarrow \infty} \mE\Bigl[\,  {t}^{-1}\sum_{k=1}^{t} x_0(k)\,\Bigr] = \mE[  x_0(0)] \\
&=\bE  \sum_{x=1}^\infty x(u-1)^x (uq)^{-x} p (1-up) (1-p)^{-1}
= \bE \Bigl[ \frac{pu(u-1)}{1-up} \Bigr] .
\end{aligned}\label{bernf}\ee

We find $g(x)$ from \eqref{bernf} and \eqref{berndual}:
 \be\label{gx2} g(x) = \begin{cases}
 {x}/{b} - b^{-1}(1-b) \bE\frac{ p}{(b-p)}  & x \ge b^2 \bE
\frac{ (1-p)}{(b - p)^2}-1\\[5pt]
  u_0^2 \bE\frac{
p(1-p)}{(1- u_0 p)^2}  &  \bE  \frac{ p}{1-p} < x <
b^2\bE \frac{ (1-p)}{(b - p)^2} -1\\[5pt]
x &  0<  x <\bE \frac{ p}{1-p}
\end{cases} \ee
where $u_0 \in (1,{b}^{-1})$ is uniquely defined by the equation
$ x+1 =\bE{(1-p)}{(1 - u_0 p)^{-2}}$.
From this we find  the inverse function $h(x)= g^{-1}(x)$  and then
$\Psi_{\rightarrow} (x,y) = y h( {x}/{y})$.  We omit these details and consider  \eqref{bernoulli5}
proved.

To  prove \eqref{bernoulli1} we return to the duality \eqref{berndual} and write
\be  g(x)
  \geq \sup_{1 \leq u < 1/b}\{xu -
\tilde f(u)\}\quad \text{for}\quad \tilde f(u) = \frac{u(u-1)}{1-ub} \bar{p}. \label{bern11}\ee
 $\tilde f'(u) = x$ is solved by  $u^* = b^{-1}\Bigl({1 - \sqrt{\frac{(1-
b)\bar{p}}{ bx+\bar{p}}}}\Bigr). $

 When $x \geq \frac{\bar{p}}{1-b}$,  we have $u^* \in [1,\frac{1}{b})$, and then
 \[\begin{split}
g(x)  \geq  xu^* - \tilde f(u^*)
       =  \frac{1}{b^2} \bigl(\sqrt{(1-b)\bar{p}} - \sqrt{bx+\bar{p}}\,\bigr)^2.
      \end{split}\]
Consequently
\[\begin{split} g^{-1}(x) &\leq \frac{1}{b} \bigl(\sqrt{b^2 x}
+\sqrt{(1-b)\bar{p}}\,\bigr)^2-
\frac{\bar{p}}b  = bx -\bar{p} + 2 \sqrt{(1-b)\bar{p} x}.
 \end{split}\]
When $x < \frac{\bar{p}}{1-b}$,  the supremum in \eqref{bern11} is
attained at $u = 1$, and in this case  $$g^{-1}(x)\le x  \leq bx + 2
\sqrt{(1-b) \bar{p} x}.$$ The bound \eqref{bernoulli1} now follows
from $\Psi_{\rightarrow} (x,y) = y g^{-1}({x}/{y}).$
\end{proof}

\begin{proof}[Proof of \eqref{bernoulli6} and \eqref{bernoulli2}]
The scheme is the same, so we omit some more details.
The inverse of the last-passage time is now defined
\[ \Gamma ((a,s), k, t) = \min\{l \in \bZ_+: T_{\uparrow}((a,s+1),(a+l, t)) \geq k \}. \]
Vertical distance $t-s$ allows for at most $t-s$ marked points, so the above
quantity must be set equal to $\infty$ for $k>t-s$.
The particle process $\{z(t): t\in\bZ_+\}$ is  defined by the same formula \eqref{bernz} as before but it
is qualitatively different.  The particles still jump to the left,
but  the ordering rule is now $z_k(t)\le z_{k+1}(t)$
so particles are allowed to sit on top of each other.  Well-definedness of the dynamics
needs no further restrictions
on admissible particle configurations   because the minimum in
\eqref{bernz} only considers $i\in\{k-t,\dotsc, k\}$ so it is well-defined for
all initial configurations $\{z_i(0):i\in\bZ\}$ such that $z_i(0)\le z_{i+1}(0)$.

The following can be checked.  Under a fixed environment $\{p_j\}$,
the gap process $\{\eta_i(t)=z_{i+1}(t)-z_i(t): i\in\bZ\}$ has i.i.d.\ geometric
invariant distributions
$ P(\eta_k =n) =( \frac{1}{1+u}) (\frac{u}{1+u})^n$, $n\in\bZ_+$,
indexed by the mean $u\in\bR_+$.   In this stationary situation the
successive jumps   $x_k(t) = z_k(t-1) - z_{k}(t)$ of a tagged particle  have
distribution
\[ P(x_{k}(t) =y )=
\begin{cases}
\frac{1}{1+up_t} &y=0 \\[4pt]
(\frac{u}{u+1})^y \frac{p_t}{1+up_t} & y \geq 1.
\end{cases}
\]
From here the analysis proceeds the same way as for the other model.  The speed
function is defined by
 \begin{align*}
f(u)&= -\lim_{n \rightarrow \infty} \mE\bigl[\, {n}^{-1} z_0(n) \bigr]  = \mE[ x_0(0)]
   = u(u+1)\bE \Bigl[\, \frac{p}{1+up} \, \Bigr]
              \end{align*}
and then convex analysis takes over.  We omit the remaining details of the
proof of \eqref{bernoulli6}.

To prove \eqref{bernoulli2},   note that
\[\begin{split}
g(x) = \sup_{u\geq 0} \{xu - f(u)\}
      &\geq \sup_{u\geq 0} \Bigl\{xu -  \frac{\bar{p}u(u+1)}{1+u\bar{p}}
     \Bigr\}\\[3pt]
     &= \begin{cases}
        \frac{1}{\bar{p}}(\sqrt{1-x} - \sqrt{1 - \bar{p}})^2 &
        \bar{p} \leq x \leq 1\\
        0  & 0 \leq x \leq \bar{p}.\\
\end{cases}
\end{split}
\]
We used Jensen's inequality and  concavity of  $p\mapsto\frac{p}{1+up}$. From this
\[g^{-1}(x) \leq
\begin{cases}
\bar{p} -\bar{p}x + 2\sqrt{\bar{p}(1-\bar{p})x} & 0 \leq x \leq
\frac{1 - \bar{p}}{\bar{p}}\\
1  &  x>\frac{1 - \bar{p}}{\bar{p}}\\
\end{cases}
\]
 and  \eqref{bernoulli2}  follows.  \end{proof}

\section{Proof of Proposition \ref{pr:limit}}   \label{sec:limit}
We comment briefly on the proof of Proposition \ref{pr:limit}.
Further details can be found in \cite{lin-thesis}.
The flow of arguments is standard.  First one takes
an integer point  $(x,y) \in \bZ^2_+$ and applies Liggett's
version of the subadditive ergodic theorem to the process
 $Z_{m,n} = - T((mx, my),(nx, ny))$, $0 \leq m <n$, to
prove that $\Psi(x,y)$ exists and is finite.
Then rational $(x,y)$ and real $(x,y)$ are handled by
approximations. Along the way regularity properties of
$\Psi$ are established and used: superadditivity, homogeneity, concavity
and continuity.

All this works easily for the Bernoulli case
because last-passage times are uniformly  bounded in terms
of path length.
Consequently we can  assume that  Proposition \ref{pr:limit} has been proved for the Bernoulli case.
 For the general case   we
 check that for integer points  $(x,y) \in \bZ^2_+$
  the moment hypotheses of the subadditive ergodic
theorem
\cite[p.~358]{durrett} follow from
our assumptions \eqref{momass}, \eqref{tailass1}, and \eqref{tailass2}:
\begin{align*}
\mE  Z^+_{0,1}  \leq  \mE  |T((0, 0),(x, y))|
\leq \mE  \sum_{0\leq i\le x, 0\leq j\le y}| X(i,j)|
= (x+1)(y+1) \mE  |X(0,0)|<\infty.
\end{align*}
Next:
\begin{align*}
\frac{1}{n}\mE  Z_{0,n}
&\ge - \frac{1}{n}\mE  \max_{\pi \in \Pi(nx, ny)} \sum_{z\in \pi} X(z)_+
= - \frac{1}{n}\mE  \max_{\pi \in \Pi(nx, ny)} \sum_{z\in \pi}
\int_0^{\infty}
I(X(z) > u)\,du\\
& \geq - \frac{1}{n}\mE  \int_0^{\infty} \max_{\pi \in \Pi(nx, ny)}
\sum_{z\in \pi} I(X(z) > u)\,du
 = -  \frac{1}{n}\int_0^{\infty} \mE  \max_{\pi \in \Pi(nx, ny)}
\sum_{z\in
\pi} I(X(z) > u)\,du\\
& \geq  -  \int_0^{\infty} \sup_n \frac{1}{n} \mE  \max_{\pi \in
\Pi(nx, ny)} \sum_{z\in \pi} I(X(z) > u)\,du
= - \int_0^{\infty}  \Psi_{Ber[1-F(u)]}(x,y)\,du\\
& \geq -(y+4\sqrt{xy})\int_0^{\infty} \sqrt{1-\bE  F_0(u)}\, du- x
\int_0^{\infty} \bigl(1-\underset{\bP}{\essinf} F_0(u)\bigr) du.
\end{align*}
  $I(A)$ is the indicator function of event $A$. $\Psi_{Ber[1-F(u)]}(x,y)$ is
the limiting time constant for the Bernoulli model where the weights
 have distributions $P(X(i,j)=1)=1-F_j(u)=1-P(X(i,j)=0)$.
On the last line above we used the Bernoulli estimate \eqref{bernoulli4}.
   By assumptions \eqref{tailass1} and \eqref{tailass2},
 $\mE
Z_{0,n} \geq n\gamma$ for a constant  $\gamma > -\infty$.
These estimates  justify the application of the subadditive ergodic
theorem.  We omit the remaining details and consider Proposition
\ref{pr:limit} proved.

\section{Proof of Theorem \ref{thm:main}}
\label{sec:thm:main}

For the first lemma,  let $\{F_j\}$ and $\{G_j\}$ be ergodic
sequences of distributions defined on a common probability space
under probability measure $\P$.  In a later step of the proof we need to
assume $\{F_j\}$ i.i.d.
 Assume that both processes $\{F_j\}$
and $\{G_j\}$  satisfy the assumptions made in Theorem \ref{thm:main}.
With some abuse of notation we label the time constants, means, and
even random weights associated to the processes $\{F_j\}$ and
$\{G_j\}$ with subscripts $F$ and $G$.  So for example
$\mu_F=\bE(\int x\,dF_0(x))$. The symbolic subscripts $F$ and $G$
should not be confused with the random distributions $F_j$ and $G_j$
assigned to the rows of the lattice.
We write $\Psi_{Ber([G(x) -F(x)]_+)}$ for the limit of a Bernoulli model
with  weight
   distributions $P(X(i,j)=1)=(G_j(x)-F_j(x))_+=1-P(X(i,j)=0)$ where $x$ is a fixed
   parameter.  An  analogous
convention will be used  for other Bernoulli models  along the way.
%We also let $(F, G)$ represent $(F_0, G_0)$.

\begin{lem}\label{lem:diff}
Assume $\{F_j\}$ and $\{G_j\}$ satisfy \eqref{tailass1},
\eqref{tailass2}, \eqref{tailass3} and \eqref{tailass4}. Then for
$\alpha>0$,
\begin{equation}
\label{eqn:diff}
\begin{split}
&\abs{\Psi_F(\alpha,1) - \Psi_G(\alpha,1) - (\mu_F - \mu_G)}\\
&\qquad \leq 8 \sqrt{\alpha} \int_{-\infty}^{+\infty}
\Bigl(\bE\abs{G_0(x)-F_0(x)}\Bigr)^{1/2} dx
 +  \alpha \int_{-\infty}^{+\infty}
\underset{\bP}{\esssup} |F_0(x) - G_0(x)|\, dx.
\end{split}
\end{equation}
\end{lem}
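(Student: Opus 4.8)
The plan is to compare the two last-passage models by coupling the weight arrays on a common probability space and controlling the difference path-by-path. First I would set up the coupling: given the common environment pair $(\{F_j\},\{G_j\})$, realize each weight as $X_F(i,j)=\Phi_{F_j}(U_{ij})$ and $X_G(i,j)=\Phi_{G_j}(U_{ij})$, where $\Phi_{F_j},\Phi_{G_j}$ are the quantile functions and $U_{ij}$ are i.i.d.\ uniforms — this is the standard monotone coupling. Under it, for each site $z$ the difference $X_F(z)-X_G(z)$ is, pointwise, a signed quantity whose positive and negative parts are controlled by Bernoulli-type variables: $\mP(X_F(z)-X_G(z)\ge u\mid \text{env})$ is dominated by a comparison of $F_j$ and $G_j$. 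The key elementary fact is that for any two nondecreasing paths with the same increments, $\bigl|\sum_{z\in\pi}X_F(z)-\sum_{z\in\pi}X_G(z)\bigr|$ is bounded using $\sum_z (X_F(z)-X_G(z))_+$ and $\sum_z (X_G(z)-X_F(z))_+$, so that
\[
\Psi_F(\alpha,1)-\Psi_G(\alpha,1)\le \text{(limit of last passage of }(X_F-X_G)_+\text{)},
\]
and symmetrically with the roles reversed. This reduces the problem to estimating the last-passage time constant of the array $Y(i,j)=(X_F(i,j)-X_G(i,j))_+$ (and its mirror image), minus the contribution of the mean, which is $\mu_F-\mu_G$.

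Next I would peel off the mean. Write $(X_F-X_G)_+ = \int_0^\infty I(X_F-X_G>u)\,du$ as in the layer-cake argument used for Proposition~\ref{pr:limit}, and similarly for the negative part. Taking last passage and using subadditivity to pull the $\max$ inside the integral (exactly the manipulation in Section~\ref{sec:limit}), the contribution of the two arrays is bounded by
\[
\int_0^\infty \Psi_{Ber(\mP(X_F-X_G>u\mid\cdot))}(\alpha,1)\,du \;+\;\int_0^\infty \Psi_{Ber(\mP(X_G-X_F>u\mid\cdot))}(\alpha,1)\,du,
\]
plus the deterministic mean term. Now I invoke the Bernoulli estimate \eqref{bernoulli2} (the $\uparrow$-bound, since here the long direction is the $y$-axis): for a Bernoulli model with row-probabilities having mean $\bar q$ and essential supremum $q^*$ one gets $\Psi_{Ber}(\alpha,1)\le \bar q + 2\sqrt{\bar q\,\alpha}$ (using $\bar q(1-\bar q)\le\bar q$), and also the cruder $\Psi_{Ber}(\alpha,1)\le q^* + \text{(something)}\cdot\sqrt\alpha$; combining, $\Psi_{Ber}(\alpha,1)\le \bar q\cdot\text{(something)}$. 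Concretely I expect to use $\Psi_{Ber(r_j)}(\alpha,1)\le 2\sqrt{\alpha\,\bE[r_0]} + \alpha\,\underset{\bP}{\esssup}\,r_0$ or similar, where $r_j$ is the Bernoulli parameter in row $j$. Plugging $r_j=\mP(X_F(i,j)-X_G(i,j)>u\mid\text{env})$, we have $\bE[r_0]\le \bE|G_0(u)-F_0(u)|$ type bounds after integrating $u$ (using $\int_0^\infty \bE[r_0]\,du \le \bE\int|F_0-G_0|$ via Fubini and the coupling), and $\underset{\bP}{\esssup}\,r_0$ integrates to $\int \underset{\bP}{\esssup}|F_0-G_0|$. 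Doing this for both positive and negative parts, over $u\in(-\infty,\infty)$, and tracking the constants through $\sqrt{a+b}\le\sqrt a+\sqrt b$, should yield exactly the factors $8\sqrt\alpha$ and $1$ in \eqref{eqn:diff}.

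The main obstacle I anticipate is the bookkeeping of constants and the validity of interchanging the integral over the threshold $u$ with the last-passage limit, since the Bernoulli parameters $r_j(u)$ are only integrable in $u$ under the tail assumptions, not uniformly bounded; one must check that the dominated-convergence / subadditivity argument of Section~\ref{sec:limit} survives when $u$ ranges over all of $\bR$ rather than just $\bR_+$ — this is where \eqref{tailass3} and \eqref{tailass4} enter, ensuring the left-tail integrals converge. A secondary subtlety is that the coupling makes $X_F-X_G$ a signed quantity, so one genuinely needs both one-sided estimates and a symmetrization; but since both processes satisfy the same hypotheses, the estimate is symmetric in $F\leftrightarrow G$ and this causes no real trouble. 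I would also note that the i.i.d.\ assumption on $\{F_j\}$ is not needed here (only ergodicity), which matches the lemma's phrasing.
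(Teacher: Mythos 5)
Your overall skeleton is right and agrees with the paper's: couple the two weight arrays via a common family of uniforms, bound the difference of time constants by a last-passage value for the differenced weights, apply a layer-cake decomposition, pull the maximum inside the integral, and then use the Bernoulli estimates of Theorem~\ref{thm:bernoulli2}. However, two points in your execution are genuinely off, and each would prevent you from reaching the stated inequality \eqref{eqn:diff}.

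First, the layer-cake parametrization. You write $(X_F-X_G)_+ = \int_0^\infty I(X_F-X_G>u)\,du$ and therefore work with Bernoulli parameters $r_j(u)=\mP(X_F(i,j)-X_G(i,j)>u\mid\text{env})$. Under the monotone coupling these are \emph{not} expressible as a simple pointwise functional of $F_j,G_j$, and the $\sqrt{\alpha}$-term you would produce after applying \eqref{bernoulli3} is $\int_0^\infty\sqrt{\bE\,r_1(u)}\,du + \int_0^\infty\sqrt{\bE\,r_2(u)}\,du$, which is \emph{not} comparable to $\int_{\bR}\sqrt{\bE|F_0(x)-G_0(x)|}\,dx$ (one can have either one strictly larger; try $F$ and $G$ uniform on overlapping intervals of very different lengths). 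The paper instead layer-cakes by value level, writing $X_F(z)-X_G(z)=\int_{\bR}\bigl[I(X_G(z)\le x<X_F(z))-I(X_F(z)\le x<X_G(z))\bigr]\,dx$. Under the quantile coupling the first indicator is, conditionally on the environment, Bernoulli with parameter exactly $(G_j(x)-F_j(x))_+$, so after applying \eqref{bernoulli3} and integrating in $x$ the right-hand side of \eqref{eqn:diff} falls out directly. Your aside ``$\bE[r_0]\le\bE|G_0(u)-F_0(u)|$'' is the value-level quantity, inconsistent with the $r_j(u)$ you defined, and in any case false pointwise; only $\int_0^\infty\bE\,r_1(u)\,du=\bE\int(G_0-F_0)_+\,dx$ is true, and that Fubini identity does not commute with the square root.

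Second, the mean cancellation. You assert that the problem reduces to ``the last-passage time constant of $(X_F-X_G)_+$ (and its mirror), minus the mean $\mu_F-\mu_G$.'' But the mean of $(X_F-X_G)_+$ is $\bE\int(G_0(x)-F_0(x))_+\,dx$, which is generally strictly larger than $\mu_F-\mu_G=\bE\int(G_0-F_0)\,dx$; dropping the negative part of $X_F-X_G$ under the maximum destroys the cancellation and leaves a constant error term that does not vanish as $\alpha\downarrow 0$ (take $F,G$ nonordered with equal means to see $|\Psi_F-\Psi_G-(\mu_F-\mu_G)|\le c\sqrt\alpha$ while your bound has a nonzero constant). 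The paper handles this by rewriting, inside the $x$-integral, $-I(X_F\le x<X_G)=\bigl(1-I(X_F\le x<X_G)\bigr)-1$, applying the Bernoulli estimate to the nonnegative variable $1-I(\cdot)$ (which is Bernoulli with parameter $1-[F_j(x)-G_j(x)]_+$), and subtracting the deterministic path-length contribution $\sum_{z\in\pi}1$; the constants $1$ and $\alpha$ then cancel against the leading terms of \eqref{bernoulli3}, and what survives is precisely $\bE(G_0-F_0)_+ - \bE(F_0-G_0)_+ = \bE(G_0-F_0)$, which integrates to $\mu_F-\mu_G$. Without this device, or some equivalent, you cannot peel the mean off the last-passage value. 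With the correct value-level parametrization, the Fubini/dominated-convergence step and the left-tail control via \eqref{tailass3}--\eqref{tailass4} go through as you describe, and the i.i.d.\ assumption is indeed not needed here.
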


\begin{proof}
The right-hand side of \eqref{eqn:diff} is finite under the
assumptions on  $\{F_j\}$ and $\{G_j\}$. Couple the $F_j$ and
$G_j$ distributed weights
 in a standard way. Let $\{u(z):z =
(i,j) \in \bZ_+^2\} $ be i.i.d.\
Uniform$(0,1)$ random variables.
Set  $X_F(z) = F_j^{-1}(u(z))$, where
$F_j^{-1}(u) = \sup\{x : F_j(x)< u\}$, and similarly  $X_G(z) =
G_j^{-1}(u(z))$. Write $\mE$ for expectation over the
entire probability space of distributions and weights.
\begin{align*}
&\quad\Psi_F(\alpha,1) - \Psi_G(\alpha,1) \\
& = \lim_{n \rightarrow \infty} \frac{1}{n}\mE\max_{\pi \in
\Pi(\lfloor \alpha n \rfloor, n )} \sum_{z \in \pi} X_F(z) - \lim_{n
\rightarrow \infty} \frac{1}{n} \mE\max_{\pi \in \Pi(\lfloor \alpha n
\rfloor,  n  )} \sum_{z \in \pi}
X_G(z) \\
&\leq \varlimsup_{n \rightarrow \infty} \frac{1}{n} \mE \max_{\pi \in
\Pi(\lfloor \alpha n \rfloor, n )} \sum_{z \in \pi}\bigl(
X_F(z)-X_G(z) \bigr)\\
&=\varlimsup_{n \rightarrow \infty}   \frac{1}{n} \mE \max_{\pi \in
\Pi(\lfloor\alpha n \rfloor, n )}
\sum_{z \in \pi} \int_{-\infty}^{+\infty}
\Bigl\{I \bigl(X_G(z)\le x< X_F(z)\bigr) - I\bigl(X_F(z)\le x< X_G(z)\bigr) \Bigr\} dx\\
&\leq \varlimsup_{n \rightarrow \infty}   \frac{1}{n} \mE
\int_{-\infty}^{+\infty} \max_{\pi \in
\Pi(\lfloor\alpha n \rfloor, n )} \sum_{z \in \pi}  \Bigl\{I \bigl(X_G(z)\le x< X_F(z)\bigr) - I\bigl(X_F(z)\le x< X_G(z)\bigr) \Bigr\} dx.
\end{align*}
We check that Fubini allows us
 to interchange the integral and the expectation.
Since $F$ and $G$ are interchangeable  it  is enough
to consider the first indicator function from above.
Let $a$ be an integer $\ge\alpha$.
\begin{align*}
&\quad\int_{-\infty}^{+\infty} \frac{1}{n}  \mE   \max_{\pi \in
\Pi(\lfloor\alpha n \rfloor, n )} \sum_{z \in \pi}
I \bigl(X_G(z)\le x< X_F(z)\bigr) dx\\
&\leq \int_{-\infty}^{+\infty}   \sup_n \frac{1}{n}\mE \max_{\pi
\in \Pi(an, n )} \sum_{z \in \pi}  I
\bigl(X_G(z)\le x< X_F(z)\bigr) \, dx\\
&= \int_{-\infty}^{+\infty}  \Psi_{Ber([G(x) -F(x)]_+)} (a,1)\, dx\\
&\leq  \int_{-\infty}^{+\infty} \Bigl( \bE \abs{G_0(x) -F_0(x)}  +
4\sqrt{a} \bigl( \bE
\abs{G_0(x) -F_0(x)} \bigr)^{1/2} + a\, \underset{\bP}{\esssup}\abs{G_0(x) - F_0(x)}
\Bigr)\, dx\\
& <\infty
\end{align*}
by estimate \eqref{bernoulli3} and
 the finiteness of the right-hand side of \eqref{eqn:diff}.
Continue from the limit above by applying Fubini. Then take the limit
inside the $dx$-integral by dominated convergence,  justified
by the $n$-uniformity in the bound above. Finally apply again the
Bernoulli  estimate \eqref{bernoulli3}.
\begin{align*}
&\quad \Psi_F(\alpha,1) - \Psi_G(\alpha,1) \\
 &\le\varlimsup_{n \rightarrow \infty}
\int_{-\infty}^{+\infty}   \frac{1}{n}
\mE \max_{\pi \in \Pi(\lfloor\alpha n \rfloor, n )} \sum_{z \in \pi}  \Bigl\{I \bigl(X_G(z)\le x< X_F(z)\bigr) - I\bigl(X_F(z)\le x< X_G(z)\bigr) \Bigr\}\, dx\\
&\le  \int_{-\infty}^{+\infty}  \lim_{n \rightarrow \infty}
\frac{1}{n}    \Bigr\{ \mE \max_{\pi \in
\Pi(\lfloor\alpha n \rfloor, n )} \sum_{z \in \pi} I\bigl(X_G(z)\le x< X_F(z)\bigr)\\
&\qquad\qquad\qquad
+\; \mE\max_{\pi \in \Pi(\lfloor \alpha n \rfloor, n )} \sum_{z \in
\pi}\bigl(1 - I\bigl(X_F(z)\le x< X_G(z)\bigr)\bigr) -
\sum_{z \in \pi}1 \Bigr\}\, dx\\
 &= \int_{-\infty}^{+\infty}
\bigl\{\Psi_{Ber([G(x)-F(x)]_+)}(\alpha,1) +
\Psi_{Ber(1-[F(x)-G(x)]_+)}(\alpha,1)
-(1+ \alpha) \bigr\}dx\\
&\leq\int_{-\infty}^{+\infty}
\biggl\{\bE\bigl(G_0(x)-F_0(x)\bigr)_+ +1-
\bE\bigl(F_0(x)-G_0(x)\bigr)_+\\
&\qquad+ 4 \sqrt{\alpha} \Bigl(\,\sqrt{\bE\bigl(G_0(x)-F_0(x)\bigr)_+}+ \sqrt{\bE\bigl(F_0(x)-G_0(x)\bigr)_+}\,  \Bigr) \\
&\qquad\qquad  +\alpha\,\Bigl(\, \underset{\bP}{\esssup} [G_0(x)-F_0(x)]_+
+ 1-\underset{\bP}{\essinf} [F_0(x)-G_0(x)]_+\Bigr)
  -(1 + \alpha)    \biggr\} dx\\
&\leq (\mu_F-\mu_G) +8\sqrt{\alpha} \int_{-\infty}^{+\infty}
\sqrt{\bE|F_0(x)-G_0(x)|} \,dx
+ \alpha\int_{-\infty}^{+\infty} \underset{\bP}{\esssup} |G_0(x)-F_0(x)|\,dx.
\end{align*}
Interchanging $F$ and $G$ gives the bound from the other direction
 and concludes the proof.
\end{proof}

For a while we make two convenient assumptions: that the weights are
 uniformly bounded, so for
a constant  $M<\infty$, \be  \bP\{ \text{$F_0(-M)=0$ and
$F_0(M)=1$}\} =1, \label{M-bd}\ee and that variances are uniformly
bounded away from zero, so for a constant $0<c_0<\infty$, \be  \bP\{
\sigma^2(F_0)\ge c_0\}=1. \label{var-bd}\ee
 Note that then
\be c_0\le \sigma^2(F_0) \leq M^2 \quad  \text{$\bP$-a.s}\label{var-2}\ee
 and the conditions assumed
for Theorem \ref{thm:main} are trivially satisfied by the uniform
boundedness.

Henceforth $r=r(\alpha)$ denotes a positive integer-valued
function such that $r(\alpha)
\nearrow \infty$ as $\alpha\searrow 0$.
Tile the lattice with $1\times r$ blocks
 $B_r(x,y) = \{(x,
ry+k): k=0,1,..., r-1\}$
 for $(x,y)\in \bZ_+^2$. A coarse-grained last-passage model
is defined by adding up the weights in each block:
\[  X_r(z) = \sum_{v\in B_r(z)} X(v). \]
The distribution of the new weight $X_r(i,j)$
 on row $j\in\bZ_+$ of the rescaled
lattice is the convolution
$  F_{r,\,j}  =  F_{rj}* F_{rj+1}*\dotsm *F_{rj+r-1}.  $

We repeat  Lemma 4.4 from
\cite{Martin2004} with a sketch of the argument.

\begin{lem}\label{lem:blocks}
 Let $\Psi_F(x,y)$
and $\Psi_{F_r}(x,y)$ be the last passage time functions
obtained by using  $F_j$ and $F_{r,j}$ as the distributions on the
$j$th row, respectively. If
$r\to\infty$ and
 $r\sqrt{\alpha} \rightarrow 0$ as
$\alpha \downarrow 0$, then $$\lim_{\alpha \downarrow 0}
\frac{1}{\sqrt{\alpha}} \bigl\lvert\Psi_F(\alpha,1) -
\frac{1}{r}\Psi_{F_r}(\alpha r,1)\bigr\rvert =0. $$
\end{lem}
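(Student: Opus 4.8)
The plan is to exhibit the coarse-grained model as a faithful rescaling of the original one by setting up a dictionary between nondecreasing paths in the two lattices, and then to control the resulting discrepancy using the standing uniform bound $|X(z)|\le M$ from \eqref{M-bd}. Fix $\alpha$ and $r=r(\alpha)$; all estimates will be uniform in $n$. The key combinatorial observation is this: a nondecreasing path $\pi$ from $(0,0)$ to $(\fl{\alpha n},n)$ has only $\fl{\alpha n}$ horizontal steps, yet it crosses roughly $n/r$ of the horizontal bands $\bZ\times\{rj,\dots,rj+r-1\}$; hence in all but at most $\fl{\alpha n}$ of these bands the path runs straight up a single column, and there it collects exactly one coarse weight $X_r$, with no loss. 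Only the ``bad'' bands, those in which $\pi$ takes a horizontal step, produce an error, and if $h_j$ denotes the number of horizontal steps of $\pi$ with source row in band $j$, then $\sum_j h_j=\fl{\alpha n}$, while in band $j$ the path visits $r+h_j$ of the $(h_j+1)r$ sites belonging to the $h_j+1$ column-blocks it touches, so the discrepancy between what $\pi$ collects there and the full block sums is at most $h_j(r-1)M\le h_j rM$ in absolute value.

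For the upper bound I would take an optimal path $\pi$ for $T_F(\fl{\alpha n},n)$ and project it: if in band $j$ the path $\pi$ occupies columns $x_j^-,\dots,x_j^+$, let the coarse path visit the coarse-points $(x,j)$ for $x_j^-\le x\le x_j^+$. Since the exit column of band $j$ equals the entry column of band $j+1$ this is an admissible coarse path to $(\fl{\alpha n},\fl{n/r})$ using $\fl{\alpha n}=\sum_j h_j$ horizontal steps, and by the observation above its weight is at least (weight of $\pi$) $-\fl{\alpha n}\,rM$. Hence $T_F(\fl{\alpha n},n)\le T_{F_r}(\fl{\alpha n},\fl{n/r})+\fl{\alpha n}\,rM$; dividing by $n$, letting $n\to\infty$, and using the existence of $\Psi_{F_r}$ together with its homogeneity and $\fl{\alpha n}/\fl{n/r}\to\alpha r$, this yields $\Psi_F(\alpha,1)\le\tfrac1r\Psi_{F_r}(\alpha r,1)+\alpha rM$.

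For the lower bound I would run this in reverse: take $n=rm$, an optimal coarse path $\Pi$ for $T_{F_r}(\fl{\alpha rm},m)$, and inflate it to a fine path that in each band runs straight up the entering column, collecting that whole block, then moves across the top row of the band collecting only the single top weight of each other column-block that $\Pi$ uses. This fine path is nondecreasing, ends at $(\fl{\alpha rm},rm)$, and its weight falls short of $T_{F_r}(\fl{\alpha rm},m)$ by at most $\sum_j h_j(r-1)M=\fl{\alpha rm}(r-1)M$, so $T_F(\fl{\alpha n},n)\ge T_{F_r}(\fl{\alpha rm},m)-\fl{\alpha n}\,rM$; dividing by $n=rm$ and sending $m\to\infty$ gives $\Psi_F(\alpha,1)\ge\tfrac1r\Psi_{F_r}(\alpha r,1)-\alpha rM$. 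Combining the two bounds, $|\Psi_F(\alpha,1)-\tfrac1r\Psi_{F_r}(\alpha r,1)|\le\alpha rM$, hence $\alpha^{-1/2}|\Psi_F(\alpha,1)-\tfrac1r\Psi_{F_r}(\alpha r,1)|\le\sqrt{\alpha}\,rM\to0$ under the hypothesis $r\sqrt{\alpha}\to0$.

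The routine nuisances I would dispatch with crude bounds rather than belabor: $n$ need not be a multiple of $r$, so in the upper-bound direction the topmost band is incomplete, but the weights of $\pi$ in those fewer than $r$ rows contribute only $O((r+\fl{\alpha n})M)=o(n)$ for fixed $\alpha$; similarly rounding the endpoints $\fl{\alpha n}$, $\fl{n/r}$, $\fl{\alpha rm}$ and handling the final coarse-row in the inflation costs only $O(rM)$; and the passage $\tfrac1nT_{F_r}(\fl{\alpha n},\fl{n/r})\to\tfrac1r\Psi_{F_r}(\alpha r,1)$ uses that $\Psi_{F_r}$ exists (a bounded model, so Proposition \ref{pr:limit} applies) and is homogeneous and continuous on $(0,\infty)^2$. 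The only genuinely delicate point is getting the path/block dictionary exactly right---in particular that exit and entry columns of consecutive bands match, so that projection and inflation do produce admissible paths---after which everything is driven by the single inequality ``discrepancy per bad band $\le h_j rM$'' and the identity $\sum_j h_j=\fl{\alpha n}$; I expect no conceptual obstacle beyond this bookkeeping. Note that only $r\sqrt{\alpha}\to0$ is actually used here; the hypothesis $r\to\infty$ is needed only where this lemma is subsequently applied.
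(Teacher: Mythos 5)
Your argument is correct and takes essentially the same route as the paper: both proofs exploit the uniform bound $|X(z)|\le M$ together with the observation that a fine path and the union of coarse blocks it meets differ on at most $(\text{number of horizontal steps})\times r$ sites, yielding $\bigl|\Psi_F(\alpha,1)-\tfrac1r\Psi_{F_r}(\alpha r,1)\bigr|\le\alpha rM$. The paper states this more compactly via the single symmetric-difference bound $\bigl|(\cup_{z\in\tilde\pi}B_r(z))\triangle\pi\bigr|\le mr$ with $m=\fl{\alpha nr}$, rather than your explicit band-by-band count $\sum_j h_j=\fl{\alpha n}$ with separate projection/inflation constructions, but the substance is identical; your closing observation that only $r\sqrt\alpha\to0$ (not $r\to\infty$) is used in this lemma is also accurate.
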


\begin{proof}
 Given a path  $\pi \in \Pi(m, nr-1)$, consider all the blocks
that it intersects; this gives a path  $\tilde{\pi} \in
\Pi(m,n-1)$ in the rescaled lattice such that
 $\bigl\lvert (\cup_{z\in \tilde{\pi}}
B_r(z))\triangle \pi \bigr\rvert \leq mr.$
Then by \eqref{M-bd}
$$\big|\max_{\pi \in \Pi(m, nr)} \sum_{z\in \pi} X(z) - \max_{\tilde{\pi} \in \Pi(m, n)} \sum_{z\in \tilde{\pi}} X_r(z)\, \big| \leq mrM.$$
Take  $m = \lfloor \alpha nr\rfloor$, divide through
 by $nr$, and the conclusion  follows.
\end{proof}

 Let
$\mu_{r,y}$ and $V_{r,y} $ be the mean
and variance of $F_{r,y}$:
$$\mu_{r,y} = \sum_{i=0}^{r-1} \mu_{ry+i},\quad \textit{ and}\quad
V_{r,y} = \sum_{i=0}^{r-1} \sigma^2_{ry+i}. $$
Let $\Phi_{r,y}$ be the distribution function of the normal
$\cN(\mu_{r,y}, V_{r,y} )$ distribution,
 and $\wt\Phi_{r,y}$ the
distribution function of  $\cN(r\mu_F, V_{r,y} )$. The
 difference between $\Phi_{r,y}$ and
$\wt\Phi_{r,y}$ is that the latter has a
non-random mean. We shall also find it convenient
to use $\{X_j\}$ as a sequence of independent variables
with (random) distributions $X_j\sim F_j$. For the
next lemma we need to assume $\{F_j\}$ an
i.i.d.\ sequence under $\P$.

 As in \cite{Martin2004}, a key step in
the proof is the replacement of the rescaled  weights
with Gaussian weights, which is undertaken in the next lemma.

\begin{lem}\label{lem:N-replace} Assume  $\{F_j\}$
i.i.d.\ under $\P$.
If $r\to\infty$ and
 $r\sqrt{\alpha} \rightarrow 0$ as $\alpha \downarrow 0$, then
\be \lim_{\alpha \downarrow 0} \frac{1}{r \sqrt{\alpha}}
|\Psi_{F_r}(\alpha r,1)-\Psi_{\Phi_r}(\alpha r,1)|=0.
\label{N-replace}\ee
\end{lem}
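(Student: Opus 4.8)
The plan is to compare the two coarse-grained last-passage models — the one built from the convolutions $F_{r,j}$ and the one built from the Gaussians $\Phi_{r,j}=\cN(\mu_{r,j},V_{r,j})$ — by exhibiting a Bernoulli-type bound on their difference, in the spirit of Lemma~\ref{lem:diff} but now applied on the rescaled lattice where each row carries its own (convolution or Gaussian) distribution. First I would write
\[
\tfrac{1}{r}\bigl|\Psi_{F_r}(\alpha r,1)-\Psi_{\Phi_r}(\alpha r,1)\bigr|
\;\le\; \tfrac{1}{r}\Bigl[\,8\sqrt{\alpha r}\int_{-\infty}^{\infty}\bigl(\bE|F_{r,0}(x)-\Phi_{r,0}(x)|\bigr)^{1/2}dx
\;+\;\alpha r\int_{-\infty}^{\infty}\underset{\bP}{\esssup}\,|F_{r,0}(x)-\Phi_{r,0}(x)|\,dx\Bigr],
\]
provided the hypotheses of Lemma~\ref{lem:diff} are met by the two row-processes $\{F_{r,j}\}$ and $\{\Phi_{r,j}\}$ (under \eqref{M-bd}--\eqref{var-bd} the moment and tail assumptions are automatic, since both families are supported, roughly, within $O(r)$ of the origin with variances in $[rc_0, rM^2]$; one should note the means match, $\bE\mu_{r,0}=r\mu_F=\bE(\text{mean of }\Phi_{r,0})$, so the $(\mu_F-\mu_G)$ term drops out — actually it is cleaner to interpose $\wt\Phi_{r,y}$, the normal with the \emph{same non-random} mean $r\mu_F$, and bound in two steps, but the mean-difference term in the $\wt\Phi$ step is controlled exactly as in \cite{Martin2004}). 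After dividing by $r\sqrt\alpha$, the right-hand side becomes
\[
\frac{8}{\sqrt r}\int_{-\infty}^{\infty}\bigl(\bE|F_{r,0}(x)-\Phi_{r,0}(x)|\bigr)^{1/2}dx
\;+\;\frac{\sqrt\alpha}{\sqrt r}\cdot\sqrt r\int_{-\infty}^{\infty}\underset{\bP}{\esssup}\,|F_{r,0}(x)-\Phi_{r,0}(x)|\,dx,
\]
so the whole problem reduces to estimating the $L^1$-in-$x$ distances between $F_{r,0}$ and its Gaussian approximant, both in the annealed ($\bE|\cdot|$) and quenched ($\esssup|\cdot|$) senses.

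The second step is the quantitative CLT input. Conditionally on the environment $\{F_j\}$, the weight $X_{r,0}$ is a sum of $r$ independent bounded variables $X_0,\dots,X_{r-1}$ with variances summing to $V_{r,0}\in[rc_0,rM^2]$ and third absolute central moments bounded by $O(rM^3)$. The Berry--Esseen theorem gives $\sup_x|F_{r,0}(x)-\Phi_{r,0}(x)|\le C M^3 r /(V_{r,0})^{3/2}\le C' r^{-1/2}$, a bound that holds \emph{for every realization of the environment}, hence also after $\esssup_\bP$. To convert the pointwise Kolmogorov bound into an $L^1(dx)$ bound one multiplies by the effective width $O(\sqrt r)$ of the region where $F_{r,0}$ and $\Phi_{r,0}$ are not both essentially $0$ or $1$ — under \eqref{M-bd} both are supported (up to exponentially small Gaussian tails) in $[-Mr,Mr]$, but the sharp statement needs the standard Berry--Esseen-with-weights refinement $|F_{r,0}(x)-\Phi_{r,0}(x)|\le C\min\{1,\; r^{-1/2}\cdot(\text{decay in }x)\}$; integrating gives $\int|F_{r,0}-\Phi_{r,0}|\,dx\le C\sqrt r\cdot r^{-1/2}\cdot\log$-type factor, but even the crude bound $\int_{-Mr}^{Mr} C r^{-1/2}dx = O(r^{1/2})$ suffices. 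Then $\int(\bE|F_{r,0}-\Phi_{r,0}|)^{1/2}dx\le (2Mr)^{1/2}\bigl(\int\bE|F_{r,0}-\Phi_{r,0}|\,dx\bigr)^{1/2}\le C r^{1/2}\cdot (r^{1/2})^{1/2}=Cr^{3/4}$ by Cauchy--Schwarz; dividing by $\sqrt r$ gives $O(r^{1/4})$, which is \emph{not} enough, so one must instead use the pointwise Gaussian-tail decay to get $\int\bE|F_{r,0}-\Phi_{r,0}|\,dx=O(1)$ (the excess-mass interpretation: the integral of $|F-\Phi|$ is the $L^1$-Wasserstein-type discrepancy, which for a sum of $r$ bounded i.i.d.-given-environment terms versus its matched Gaussian is $O(1)$, not growing with $r$ — indeed $\int|F_{r,0}(x)-\Phi_{r,0}(x)|dx$ equals $\int_0^1|F_{r,0}^{-1}(u)-\Phi_{r,0}^{-1}(u)|du$-ish and a local-CLT / Edgeworth estimate bounds it by $O(1)$). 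With $\int\bE|F_{r,0}-\Phi_{r,0}|dx=O(1)$ and $\int\esssup|F_{r,0}-\Phi_{r,0}|dx=O(1)$ uniformly in $r$ (here \eqref{M-bd}'s uniform boundedness and \eqref{var-bd}'s uniform ellipticity are exactly what make these bounds environment-uniform), the displayed quantity is $O(r^{-1/2})+O(\sqrt\alpha/\sqrt r)=o(1)$ as $\alpha\downarrow0$, using $r\to\infty$. This proves \eqref{N-replace}.

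The main obstacle is the $L^1(dx)$ Gaussian approximation with the correct (order $1$, $r$-independent) bound on $\int\bE|F_{r,0}(x)-\Phi_{r,0}(x)|\,dx$, together with its quenched analogue $\int\esssup_\bP|F_{r,0}-\Phi_{r,0}|\,dx=O(1)$: the naive Berry--Esseen constant times support length is too lossy, and one genuinely needs the Gaussian-tail control in the regime $|x-r\mu_F|\gg\sqrt r$ (e.g.\ via a Bernstein/Hoeffding large-deviation bound on $1-F_{r,0}(x)$ matched against the Gaussian tail) to see that the discrepancy is concentrated in a window of width $O(\sqrt r)$ where it is $O(r^{-1/2})$. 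This is precisely the estimate underlying Lemma~4.5 in \cite{Martin2004}; the only new point is to track that every constant is deterministic in the environment, which follows from \eqref{M-bd} and \eqref{var-bd}. Once this uniform-in-$r$, uniform-in-environment Gaussian-approximation bound is in hand, plugging it into the Lemma~\ref{lem:diff}-type inequality and dividing by $r\sqrt\alpha$ finishes the proof, the conditions $r\to\infty$ and $r\sqrt\alpha\to0$ being used exactly to kill the two resulting error terms.
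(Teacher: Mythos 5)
Your overall framework is right — you apply Lemma~\ref{lem:diff} to the rescaled row processes $\{F_{r,j}\}$ and $\{\Phi_{r,j}\}$ and then feed in a Berry--Esseen estimate — and this is indeed the paper's strategy. But the bookkeeping on the dominant term, $\frac{8}{\sqrt r}\int(\bE|F_{r,0}-\Phi_{r,0}|)^{1/2}\,dx$, has a genuine gap. You correctly discard the crude bound $\int\bE|\cdot|\,dx=O(\sqrt r)$ and argue (correctly) that the non-uniform Berry--Esseen decay yields $\int\bE|F_{r,0}-\Phi_{r,0}|\,dx=O(1)$. But plugging $O(1)$ back into your Cauchy--Schwarz over the support $[-Mr,Mr]$ still gives $\int(\bE|\cdot|)^{1/2}\,dx\le (2Mr)^{1/2}\cdot O(1)^{1/2}=O(\sqrt r)$, so the first term is $O(1)$, not the $O(r^{-1/2})$ you then assert. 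The problem is that the square root is taken \emph{before} integrating: the integrand decays only like $r^{-1/4}(1+|x-r\mu_F|/\sqrt r)^{-3/2}$, not like $r^{-1/2}(1+|x-r\mu_F|/\sqrt r)^{-3}$, and the Cauchy--Schwarz factor $(2Mr)^{1/2}$ coming from the full support kills you regardless of how good the $L^1$ bound is. One genuinely needs to exploit the $O(\sqrt r)$-window localization \emph{inside} the square root — either by performing Cauchy--Schwarz against a weight $f$ with polynomial decay (as the paper does with $f(x)=c_1(1+|x-r\mu_F|^{1+\delta})^{-1}$, yielding $\int(\bE|\cdot|)^{1/2}\,dx=O(r^{(1+\delta)/4})$, so that dividing by $\sqrt r$ gives $O(r^{(\delta-1)/4})\to0$), or by splitting the $x$-integral into a window of width $O(\sqrt r)$ around $r\mu_F$ plus a polynomially-decaying tail. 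Your proposal, as written, does neither and its final displayed estimate does not follow from the steps you give.

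A secondary inaccuracy: $\int\esssup_\bP|F_{r,0}-\Phi_{r,0}|\,dx$ is $O(\sqrt r)$, not $O(1)$. The $\esssup$ allows the random location $\mu_{r,0}$ to sit anywhere in an interval of length $O(r)$, so on that whole interval the non-uniform Berry--Esseen bound is only $\sim r^{-1/2}$, and the integral over it contributes $O(\sqrt r)$. The paper computes precisely this. This imprecision does not sink the argument — the resulting term $\sqrt\alpha\cdot O(\sqrt r)=O(\sqrt{\alpha r})$ still vanishes because $r\sqrt\alpha\to0$ forces $\alpha r\to0$ — but you should correct the claimed order. The real missing idea is the weighted Schwarz estimate (or equivalent localization argument) for the square-rooted integral; without it the proof does not close.
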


\begin{proof}
According to Theorem 5.17 of \cite{Petrov},
 independent mean 0 random variables
 $X_1, X_2, X_3, \dotsc$  satisfy the estimate
\[ \Bigl\lvert P\Bigl\{ B_r^{-1/2}{\sum_{i=1}^r X_i}\le x\Bigr\}- \Phi(x)
\,\Bigr\rvert  \leq A\frac{\sum_{i=1}^r
E|X_i|^3}{B_r^{{3}/{2}}}(1+|x|)^{-3}, \quad x\in\bR,
\]
where  $B_r =
\sum_{i=1}^r \Var(X_i) $, $\Phi$ is the standard normal
 distribution function, and
$A$ is  a constant that is independent of the distribution functions
of $X_1, X_2, \dotsc , X_r$. Then,
\begin{equation}\label{eqn:5.4.1}\begin{aligned}
| F_{r,y}(x) - \Phi_{r,y}(x)|
&\leq A \frac{\sum_{i=0}^{r-1} E|X_{ry+i} -
\mu_{ry+i}|^3}{(\sum_{i=0}^{r-1}
\sigma_{ry+i}^2)^{{3}/{2}}}\bigl(1+V_{r,y}^{-1/2}|x -
\mu_{r,y}|\bigr)^{-3}\\
&\le \frac{C}{\sqrt r} \bigl(1+M^{-1}r^{-1/2}|x -
\mu_{r,y}|\,\bigr)^{-3}
\end{aligned}\end{equation}
where the second inequality used the assumptions
 $P(|X_i | \leq M) =1$ and $\sigma^2_i \geq
c_0^2 >0$.

Armed with \eqref{eqn:5.4.1} we now estimate the right-hand
side of \eqref{eqn:diff} for the processes
$\{F_{r,y}\}_{y\in\bZ_+}$ and $\{\Phi_{r,y}\}_{y\in\bZ_+}$
and with $\alpha$ replaced by $\alpha r$.

For the first term on the right in  \eqref{eqn:diff},
note this Schwarz trick:  for a probability density
$f$ on $\bR$ and a function $H\ge 0$,
\[
\int \sqrt H\,dx = \int f^{1/2} \sqrt{f^{-1}H}\,dx \le \biggl(\,
\int f^{-1} H\,dx\biggr)^{1/2}. \] For the calculation below take
$\delta>0$ and $f(x)= c_1(1+\abs{x-r\mu_F}^{1+\delta})^{-1}$ for the
right constant $c_1=c_1(\delta)$.   Factors that depend on  $M$ and
$\delta$ are subsumed in a constant $C$.  Then
\begin{align*}
&\sqrt{\alpha r}  \int_{-\infty}^{+\infty}
\Bigl(\bE\abs{F_{r,0}(x)-\Phi_{r,0}(x)}\Bigr)^{1/2} dx \\
&\le C\alpha^{1/2} r^{1/4}
\int_{-\infty}^{+\infty} \biggl\{\bE\Bigl[ \bigl(1+M^{-1}r^{-1/2}|x -
\mu_{r,0}|\,\bigr)^{-3} \Bigr] \biggr\}^{1/2}dx   \\
&\le C\alpha^{1/2} r^{1/4} \biggl\{
\bE \int_{-\infty}^{+\infty} \bigl( 1+\abs{x-r\mu_F}^{1+\delta}\bigr)
  \biggl(1+\frac{|x -
\mu_{r,0}|}{M\sqrt r}\,\biggr)^{-3}\, dx \biggr\}^{1/2}
\intertext{\text{by a change of  variables $x=\mu_{r,0}+yM\sqrt{r}$}}
&= C\alpha^{1/2} r^{1/2} \biggl\{
\bE \int_{-\infty}^{+\infty}
\frac{1+\abs{\mu_{r,0}-r\mu_F+yM\sqrt{r} }^{1+\delta}}
 {(1+\abs{y})^3} \, dy \biggr\}^{1/2} \\
&\le    C\alpha^{1/2} r^{1/2} \Bigl\{
\bE \abs{\mu_{r,0}-r\mu_F}^{1+\delta} + r^{(1+\delta)/2}\Bigr\}^{1/2}
\le   C\alpha^{1/2} r^{(3+\delta)/4}.
\end{align*}
In the last step we used $\bE \abs{\mu_{r,0}-r\mu_F}^{1+\delta}
\le Cr^{(1+\delta)/2}$ which follows because $\mu_{r,0}-r\mu_F$
is a sum of $r$ bounded mean zero i.i.d.\ random variables.

For the second term on the right in  \eqref{eqn:diff},
\begin{align*}
&\alpha r\int_{-\infty}^{+\infty} \underset{\bP}\esssup
\abs{ F_{r,0}(x) - \Phi_{r,0}(x)}\, dx
\leq C\alpha r^{1/2} \int_{-\infty}^{+\infty}
\underset{\bP}\esssup \biggl(1+\frac{|x -
\mu_{r,y}|}{M\sqrt r}\,\biggr)^{-3}   dx\\
&\qquad \leq  C\alpha r^{1/2} \biggl\{\int_{-\infty}^{-rM}
\biggl( 1+
\frac{-rM-x}{M\sqrt{r}}\biggr)^{-3} dx + \int_{-rM}^{rM}  dx +
\int_{rM}^{+\infty} \biggl(1+ \frac{x-rM}{M\sqrt{r}}\biggr)^{-3}
 dx\biggr\}\\
&\qquad \leq   C\alpha r^{3/2}.
\end{align*}
To summarize, with these estimates and \eqref{eqn:diff}
we have
\[   \frac{1}{r \sqrt{\alpha}}
|\Psi_{F_r}(\alpha r,1)-\Psi_{\Phi_r}(\alpha r,1)|
\le   \frac{C}{r \sqrt{\alpha}}
(\alpha^{1/2} r^{(3+\delta)/4} +  \alpha r^{3/2}). \]
If $\delta$ is fixed small enough,
 assumptions $r\to\infty$ and $r\sqrt\alpha\to 0$ make this vanish as
$\alpha\to 0$.\end{proof}

 The next lemma makes a further approximation
that puts us in the situation where
 all sites have  normal variables with the
same mean.

\begin{lem}
Let $\Psi_{\Phi_r}$ and $\Psi_{\wt\Phi_r}$ be defined
as before, and again $r\sqrt{\alpha} \rightarrow 0$ as $\alpha
\rightarrow 0$. Then
 $$\lim_{\alpha \downarrow 0}
\frac{1}{r\sqrt{\alpha}}|\Psi_{\Phi_r} (\alpha r, 1 ) -
\Psi_{\wt\Phi_r} (\alpha r, 1 )|=0.$$
\end{lem}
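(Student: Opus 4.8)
The plan is to compare $\Psi_{\Phi_r}$ and $\Psi_{\wt\Phi_r}$ by a direct coupling rather than through Lemma~\ref{lem:diff}: the latter loses too much here, because $\bE\abs{\Phi_{r,0}(x)-\wt\Phi_{r,0}(x)}$ stays of order one on an $x$-window of width $\asymp\sqrt r$ about $r\mu_F$, so its square root integrates to order $\sqrt r$, leaving a term $\sqrt{\alpha r}\cdot\sqrt r=r\sqrt\alpha$ which does \emph{not} vanish once divided by $r\sqrt\alpha$. The key observation is that $\Phi_{r,y}$ and $\wt\Phi_{r,y}$ are Gaussians with the \emph{same} variance $V_{r,y}$, whose means differ by the environment-measurable quantity
\[
Z_y \;:=\; \mu_{r,y}-r\mu_F \;=\; \sum_{i=0}^{r-1}\bigl(\mu_{ry+i}-\mu_F\bigr).
\]
Under the standing boundedness reduction \eqref{M-bd} we have $\abs{Z_y}\le 2Mr$, and since $\{F_j\}$ is i.i.d.\ the $Z_y$ are i.i.d.\ with $\bE Z_0=0$.

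First I would couple the two weight arrays on one space: with the environment frozen, take independent $Y(i,y)\sim\cN(0,V_{r,y})$ and set $X_{\wt\Phi_r}(i,y)=r\mu_F+Y(i,y)$ and $X_{\Phi_r}(i,y)=\mu_{r,y}+Y(i,y)=X_{\wt\Phi_r}(i,y)+Z_y$; these have the correct conditional marginals and conditional independence, and $T_{\Phi_r}$, $T_{\wt\Phi_r}$ now live on the same space. For a weakly increasing path $\pi$ from $(0,0)$ to $(m,n)$ let $k_y(\pi)$ be the number of points of $\pi$ in row $y$; then $k_y(\pi)\ge1$ for $0\le y\le n$ and $\sum_{y=0}^n k_y(\pi)=m+n+1$, and conversely every such vector is realized by some $\pi\in\Pi(m,n)$. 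Because $Z_y$ is constant along row $y$,
\[
\sum_{z\in\pi}X_{\Phi_r}(z)=\sum_{z\in\pi}X_{\wt\Phi_r}(z)+\sum_{y=0}^n k_y(\pi)\,Z_y ,
\]
and writing $k_y=1+l_y$ with $l_y\ge0$, $\sum_y l_y=m$, gives $\max_\pi\sum_y k_y(\pi)Z_y=\sum_{y=0}^n Z_y+m\max_{0\le y\le n}Z_y$ and likewise with $\min$. Estimating $T_{\Phi_r}(m,n)$ from above by maximizing the two sums separately, and from below by feeding the $T_{\wt\Phi_r}$-optimal path into the $\Phi_r$-problem, I obtain the pointwise bound
\[
\bigl\lvert T_{\Phi_r}(m,n)-T_{\wt\Phi_r}(m,n)\bigr\rvert\;\le\;\Bigl\lvert\sum_{y=0}^n Z_y\Bigr\rvert+m\max_{0\le y\le n}\lvert Z_y\rvert .
\]

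Finally I would put $m=\lfloor\alpha rn\rfloor$, divide by $n$ and send $n\to\infty$: by the law of large numbers $n^{-1}\lvert\sum_{y=0}^n Z_y\rvert\to0$ a.s., while $n^{-1}m\to\alpha r$ and $\max_{0\le y\le n}\lvert Z_y\rvert\le2Mr$, and the two time constants exist by Proposition~\ref{pr:limit} (the block processes are i.i.d.\ with Gaussian tails, so \eqref{momass}--\eqref{tailass2} hold). This yields $\lvert\Psi_{\Phi_r}(\alpha r,1)-\Psi_{\wt\Phi_r}(\alpha r,1)\rvert\le\alpha r\,\underset{\bP}{\esssup}\lvert Z_0\rvert\le2M\alpha r^2$, hence $\frac1{r\sqrt\alpha}\lvert\Psi_{\Phi_r}(\alpha r,1)-\Psi_{\wt\Phi_r}(\alpha r,1)\rvert\le2M\,r\sqrt\alpha\to0$ by the hypothesis $r\sqrt\alpha\to0$. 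The delicate point — and the reason a cruder comparison is too lossy — is precisely that one bad row contributes its shift $Z_y$ with multiplicity $k_y(\pi)$, which can be as large as $m=\lfloor\alpha rn\rfloor$; this pushes the error up to order $\alpha r^2$ rather than order $\alpha r$, and it is only the normalization by $r\sqrt\alpha$ together with $r\sqrt\alpha\to0$ that still kills it.
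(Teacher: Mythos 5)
Your proof is correct and takes essentially the same route as the paper: both couple the two Gaussian arrays via the row-wise shift $Z_y=\mu_{r,y}-r\mu_F$, observe that a path from $(0,0)$ to $(m,n)$ visits every row once plus at most $m$ extra times, and conclude via the law of large numbers for $\frac1n\sum_y Z_y$ together with the crude bound $\lvert Z_y\rvert\le 2Mr$ to get the $2M\alpha r^2$ error. Your preliminary remark about why Lemma~\ref{lem:diff} is too lossy here is also accurate and explains why the paper switches to a direct coupling for this step.
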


\begin{proof}
For  $z=(i,j) \in \bZ^2_+$, let $X^{(r)}(z)$ have distribution
$\Phi_{r,j}$ so that
$\wt{X}^{(r)}(z) = X^{(r)}(z) - \mu_{r,j} + r\mu_F$
 has distribution
$\wt\Phi_{r,j}$.  Now estimate:
\begin{align*}
\Psi_{\wt\Phi_r} (\alpha r, 1 )& = \lim_{n \rightarrow
\infty} \frac{1}{n} \max_{\pi \in \Pi(\lfloor \alpha nr \rfloor,  n
)}
\sum_{z \in \pi} \wt{X}^{(r)}(z)\\
%& = \lim_{n \rightarrow \infty} \frac{1}{n} \max_{\pi \in
%\Pi(\lfloor \alpha nr \rfloor,  n )} \sum_{z \in \pi}\bigl(
%X^{(r)}(z) - \mu^{(r)}(z) + r\mu_F\bigr)\\
& \leq \lim_{n \rightarrow \infty} \frac{1}{n} \max_{\pi \in
\Pi(\lfloor \alpha nr \rfloor,  n )} \sum_{z \in \pi} X^{(r)}(z) +
\lim_{n \rightarrow \infty} \frac{1}{n} \max_{\pi \in \Pi(\lfloor
\alpha nr \rfloor,  n )} \sum_{z\in \pi}\bigl( - \mu_{r,j} +
r\mu_F\bigr)\\
&\leq \Psi_{\Phi_r} (\alpha r, 1 ) + \lim_{n \rightarrow \infty}
\frac{1}{n} \sum_{j=1}^n\bigl( - \mu_{r,j} + r\mu_F\bigr)+
\lim_{n \rightarrow \infty} \frac{1}{n} 2Mr\cdot \lfloor \alpha nr\rfloor\\
&= \Psi_{\Phi_r} (\alpha r, 1 ) + 2M\alpha r^2.
\end{align*}
The opposite bound
$\Psi_{\wt\Phi_r} (\alpha r, 1 )\geq  \Psi_{\Phi_r} (\alpha r, 1 )  - 2M\alpha
r^2
$  comes similarly, and the lemma follows.
%from  $r\sqrt{\alpha} \to 0$.
\end{proof}

%The last three lemmas give  \be \lim_{\alpha \downarrow 0}
%\frac{1}{\sqrt{\alpha}}|\Psi_{F} (\alpha , 1 ) -
%\frac{1}{r}\Psi_{\wt\Phi_r} (\alpha r, 1 )| = 0. \label{eqn:bdd}\ee

Let us separate the mean by letting
 $\overline{\Phi}_{r,y}$ denote
 the  $N(0, \sum_{i=0}^{r-1} \sigma^2_{ry+i} )$
 distribution
function. Since the last-passage functions of the normal distributions
satisfy
$\Psi_{\wt\Phi_r} (\alpha r, 1 ) = r\mu_F(1+\alpha r)+\Psi_{\overline{\Phi}^{(r)}} (\alpha r, 1 )$,
  we can  summarize the effect of the last three lemmas as
follows.

\begin{lem}   Assume  $\{F_j\}$
i.i.d.\ under $\P$, and assume $r=r(\alpha)$ satisfies
 $r\to\infty$ and
 $r\sqrt{\alpha} \rightarrow 0$ as $\alpha \downarrow 0$.
 Under assumptions \eqref{M-bd} and \eqref{var-bd}
\begin{equation}\label{eqn:main1}
\lim_{\alpha \downarrow 0} \frac{1}{\sqrt{\alpha}}|\Psi_{F} (\alpha
, 1 ) - \mu_F- \frac{1}{r}\Psi_{\overline{\Phi}^{(r)}} (\alpha r, 1 )| =
0.
\end{equation}
\label{summ-lm}\end{lem}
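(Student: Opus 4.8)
The plan is a straightforward assembly: \eqref{eqn:main1} follows by telescoping the three comparison lemmas just proved and then inserting the exact translation identity for Gaussian last-passage values. Throughout, $r=r(\alpha)$ is the function fixed in the hypotheses, so $r\to\infty$ and $r\sqrt\alpha\to0$ as $\alpha\downarrow0$, which is exactly what makes Lemma~\ref{lem:blocks}, Lemma~\ref{lem:N-replace}, and the lemma comparing $\Psi_{\Phi_r}$ with $\Psi_{\wt\Phi_r}$ applicable; the i.i.d.\ assumption on $\{F_j\}$ together with \eqref{M-bd} and \eqref{var-bd} supplies the remaining hypotheses of those lemmas.

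First I would write the telescoping decomposition
\[
\Psi_F(\alpha,1)-\mu_F-\tfrac1r\Psi_{\overline{\Phi}^{(r)}}(\alpha r,1)
=\Bigl(\Psi_F(\alpha,1)-\tfrac1r\Psi_{F_r}(\alpha r,1)\Bigr)
+\tfrac1r\Bigl(\Psi_{F_r}(\alpha r,1)-\Psi_{\Phi_r}(\alpha r,1)\Bigr)
\]
\[
{}+\tfrac1r\Bigl(\Psi_{\Phi_r}(\alpha r,1)-\Psi_{\wt\Phi_r}(\alpha r,1)\Bigr)
+\Bigl(\tfrac1r\Psi_{\wt\Phi_r}(\alpha r,1)-\mu_F-\tfrac1r\Psi_{\overline{\Phi}^{(r)}}(\alpha r,1)\Bigr),
\]
divide through by $\sqrt\alpha$, and bound by the triangle inequality. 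After division by $\sqrt\alpha$ the first summand tends to $0$ by Lemma~\ref{lem:blocks}. For the second and third summands, Lemma~\ref{lem:N-replace} gives $\tfrac1{r\sqrt\alpha}|\Psi_{F_r}(\alpha r,1)-\Psi_{\Phi_r}(\alpha r,1)|\to0$ and the lemma comparing $\Psi_{\Phi_r}$ with $\Psi_{\wt\Phi_r}$ gives $\tfrac1{r\sqrt\alpha}|\Psi_{\Phi_r}(\alpha r,1)-\Psi_{\wt\Phi_r}(\alpha r,1)|\to0$; since $\tfrac1{\sqrt\alpha}\cdot\tfrac1r=\tfrac1{r\sqrt\alpha}$, these are precisely the normalizations that appear, so both summands vanish as $\alpha\downarrow0$.

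For the last summand I would use the identity recorded just before the statement,
\[
\Psi_{\wt\Phi_r}(\alpha r,1)=r\mu_F(1+\alpha r)+\Psi_{\overline{\Phi}^{(r)}}(\alpha r,1),
\]
which holds because every nondecreasing path from $0$ to $(\lfloor\alpha nr\rfloor,n)$ contains the same number $1+\lfloor\alpha nr\rfloor+n$ of lattice sites: subtracting $r\mu_F$ from each $\cN(r\mu_F,V_{r,y})$ weight to recentre it to the mean-zero law $\overline{\Phi}_{r,y}$ shifts the last-passage time by exactly $r\mu_F(1+\lfloor\alpha nr\rfloor+n)$, and dividing by $n$ and letting $n\to\infty$ yields the displayed equality. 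Consequently the last summand equals $\mu_F\alpha r$, so $\tfrac1{\sqrt\alpha}|\mu_F\alpha r|=|\mu_F|\,r\sqrt\alpha\to0$ by the hypothesis $r\sqrt\alpha\to0$. Adding the four estimates gives \eqref{eqn:main1}. I do not expect a genuine obstacle; the argument is bookkeeping, and the only points to keep straight are the matching of the $\sqrt\alpha$- and $r\sqrt\alpha$-normalizations across the three lemmas and the elementary shift identity for the Gaussian weights, whose validity rests entirely on all admissible paths to a fixed endpoint having equal length.
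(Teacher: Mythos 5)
Your proof is correct and is exactly the argument the paper intends: the lemma is stated as a summary of the three preceding comparison lemmas together with the Gaussian shift identity, and your telescoping decomposition, the normalization bookkeeping ($\tfrac1{\sqrt\alpha}\cdot\tfrac1r=\tfrac1{r\sqrt\alpha}$), and the path-length count justifying $\Psi_{\wt\Phi_r}(\alpha r,1)=r\mu_F(1+\alpha r)+\Psi_{\overline{\Phi}^{(r)}}(\alpha r,1)$ are precisely what the paper omits as routine.
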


In order to deduce a  limit from  \eqref{eqn:main1}   we utilize the
explicitly computable   case   of exponential distributions from
\cite{TimoKrug}.  We need to match up the random variances of the
exponentials with the variances $\sigma_j^2$  of the sequence
$\{F_j\}$.  Thus, given the i.i.d.\ sequence of quenched variances
$\sigma_j^2=\sigma^2(F_j)$ that we have worked with up to now under
condition \eqref{var-2},  let $\xi_j=1/\sigma_j$ and
$G_j(x)=1-e^{-\xi_jx}$   the rate $\xi_j$ exponential distribution.
Then $\{\xi_j\}_{j\in\bZ_+}$ is  an i.i.d.\ sequence of bounded
random variables  $0<c\leq \xi_j\leq b$ with distribution $m$.  We
can assume $c$ is the exact lower bound: $m[c,c+\e)>0$ for each
$\e>0$.  $G_j$ has mean and variance $\mu(G_j)=\xi_j^{-1}$ and
$\sigma^2(G_j)=\xi_j^{-2}=\sigma_j^2$.

Assumptions  \eqref{tailass1} and \eqref{tailass2} are easily
checked, and so the last-passage function   $\Psi_{G}$ is well-defined.
We would like to apply Lemma \ref{summ-lm} to this exponential
model, but obviously assumption \eqref{M-bd} is not satisfied.
To get around this difficulty we  do the following approximation
which leaves the quenched means and variances intact.  We learned
this trick from \cite{Martin2004}.

Let  $Y_j$ denote a $G_j$-distributed
 random variable.  For a fixed $\tau>0$, let $$m_j =
E(Y_j| Y_j >\tau ) \quad\text{and}\quad  w_j = E(Y_j^2| Y_j >\tau ).$$
The quantities
\[\ppb_j = \frac{(m_j-\tau )^2}{(m_j-\tau )^2 + w_j - m_j^2}\quad \textrm{and} \quad u_j = \frac{w_j -
\tau ^2}{ m_j -\tau }-\tau \]
satisfy the equations
\[(1-\ppb_j)\tau  + \ppb_j u_j = m_j \quad \textrm{and} \quad  (1-\ppb_j)\tau ^2 + \ppb_j u_j^2 = w_j.\]
Then  $0\leq \ppb_j \leq 1 $, $u_j \geq \tau $ and $ w_j
\geq \tau^2$.
Define distribution functions
\begin{equation}\label{deftildeF}
\wt{G}_j(x) =
\begin{cases}
 G_j(x) & 0\leq x < \tau \\
 1-\ppb_j[1-G_j(\tau )] & \tau \leq x < u_j\\
 1 &  x\geq u_j.
\end{cases}
\end{equation}
$\wt Y_j \sim \wt G_j$ satisfies  $EY_j = E \wt{Y}_j$ and $EY_j^2 = E
\wt{Y}_j^2$.
 Moreover, for any fixed  $\tau >0$, $$u_j= \frac{E(Y_j^2|
Y_j>\tau )-\tau^2}{E(Y_j| Y_j>\tau )-\tau }-\tau  = \frac{2}{\ppa_j} + \tau  \leq \frac{2}{c}
+ \tau ,$$ so the distributions  $\{\wt G_j\}$ are all supported
on  the   nonrandom bounded interval $[0, 2/c+\tau ]$.
Consequently Lemma \ref{summ-lm} applies to $\wt G$.
We can draw the same conclusion for $G$ once we have the next estimate:

\begin{lem}\label{lem:tobdd}
Given $\e>0$, we can select $\tau $ large enough and define
$\wt{G}_j$ as in \eqref{deftildeF} so that
$$\varlimsup_{\alpha \downarrow 0} \frac{1}{ \sqrt{\alpha}}
|\Psi_{G}(\alpha ,1)-\Psi_{\wt{G}}(\alpha ,1)|< \e.$$
\end{lem}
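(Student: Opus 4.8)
\emph{Proof plan.} The idea is to reduce the whole statement to Lemma~\ref{lem:diff} applied to the pair of environments $\{G_j\}$ and $\{\wt G_j\}$. First I would check that the hypotheses are in force: each $\wt G_j$ is supported on the nonrandom bounded interval $[0,\,2/c+\tau]$, so the tail assumptions \eqref{tailass1}--\eqref{tailass4} hold trivially for $\{\wt G_j\}$ and $\Psi_{\wt G}$ is well-defined. Moreover $\wt Y_j\sim\wt G_j$ satisfies $E\wt Y_j=EY_j$ for every realization of $\xi_j$, hence $\mu(\wt G_j)=\mu(G_j)$ pointwise and therefore the averaged means coincide: $\mu_{\wt G}=\mu_G$. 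Lemma~\ref{lem:diff} (with its ``$F$'' taken to be our $G$ and its ``$G$'' our $\wt G$) then gives, for every $\alpha>0$,
\[
\frac{1}{\sqrt\alpha}\,\bigl|\Psi_G(\alpha,1)-\Psi_{\wt G}(\alpha,1)\bigr|
\;\le\; 8\int_{-\infty}^{+\infty}\bigl(\bE|G_0(x)-\wt G_0(x)|\bigr)^{1/2}dx
\;+\;\sqrt\alpha\int_{-\infty}^{+\infty}\underset{\bP}{\esssup}\,|G_0(x)-\wt G_0(x)|\,dx .
\]

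Next I would record a pointwise tail bound for $|G_0(x)-\wt G_0(x)|$. From the definition \eqref{deftildeF}, $G_0$ and $\wt G_0$ agree on $(-\infty,\tau)$; on $[\tau,u_0)$ one has $1-G_0(x)=e^{-\xi_0x}$ and $1-\wt G_0(x)=\ppb_0\,e^{-\xi_0\tau}$; and $\wt G_0\equiv 1$ on $[u_0,\infty)$, where $u_0\le 2/c+\tau$. Using $\xi_0\ge c$ and $0\le\ppb_0\le 1$, this yields, uniformly in the environment,
\[
|G_0(x)-\wt G_0(x)|\;\le\;
\begin{cases}
e^{-c\tau}, & \tau\le x<2/c+\tau,\\[3pt]
e^{-cx}, & x\ge 2/c+\tau .
\end{cases}
\]
Hence both integrals on the right above are finite; in particular $\int_{-\infty}^{+\infty}\bigl(\bE|G_0-\wt G_0|\bigr)^{1/2}dx\le \tfrac{2}{c}(1+e^{-1})e^{-c\tau/2}$, and the $\underset{\bP}{\esssup}$-integral is bounded by a finite constant depending only on $c$ and $\tau$.

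Then I would let $\alpha\downarrow 0$: the term $\sqrt\alpha\int\underset{\bP}{\esssup}|G_0-\wt G_0|\,dx$ vanishes because its integral is a finite constant, so
\[
\varlimsup_{\alpha\downarrow 0}\frac{1}{\sqrt\alpha}\,\bigl|\Psi_G(\alpha,1)-\Psi_{\wt G}(\alpha,1)\bigr|
\;\le\; 8\int_{-\infty}^{+\infty}\bigl(\bE|G_0(x)-\wt G_0(x)|\bigr)^{1/2}dx
\;\le\; \frac{16}{c}\,(1+e^{-1})\,e^{-c\tau/2}.
\]
Given $\e>0$, choosing $\tau$ large enough makes the right-hand side $<\e$, which is the claim. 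The main point is recognizing that Lemma~\ref{lem:diff} reduces the matter to the single estimate on $\int(\bE|G_0-\wt G_0|)^{1/2}\,dx$; there is no genuine obstacle beyond the bookkeeping of the exponential tail, and the one thing to be careful about is that the bound on the random interval $[\tau,u_0)$ be uniform over the environment, which is exactly what the uniform bounds $\xi_j\ge c$ and $u_j\le 2/c+\tau$ provide.
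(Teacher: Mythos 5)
Your proposal is correct and takes essentially the same route as the paper: apply Lemma~\ref{lem:diff} to the pair $\{G_j\}$, $\{\wt G_j\}$ and show the two integrals on the right of \eqref{eqn:diff} are finite and go to $0$ as $\tau\to\infty$, using the uniform bounds $\xi_j\ge c$ and $u_j\le 2/c+\tau$. The paper states this without spelling out the tail estimate $|G_0(x)-\wt G_0(x)|\le e^{-c\tau}$ on $[\tau,2/c+\tau)$ and $\le e^{-cx}$ beyond; your explicit computation fills in precisely those details, and also (correctly) uses the pointwise equality of means $\mu(\wt G_j)=\mu(G_j)$ to kill the $(\mu_F-\mu_G)$ term.
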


\begin{proof}
This comes from an application of Lemma \ref{lem:diff}.
$G_j=\wt G_j$ on $(-\infty, \tau )$ and
 $1 - \wt{G}_j \leq 1 - G_j$ on all of $\bR$.
The integrals on the right-hand side of \eqref{eqn:diff} are finite
and can be made arbitrarily small by choosing $\tau $ large.
\end{proof}

Currently we have shown that
\begin{equation}\label{eqn:main2}
\lim_{\alpha \downarrow 0} \frac{1}{\sqrt{\alpha}}|\Psi_{G} (\alpha
, 1 ) - \bE \sigma_{0}- \frac{1}{r}\Psi_{\overline{\Phi}_{r}} (\alpha r,
1 )| = 0.
\end{equation}
It remains to perform
 an explicit calculation on $\Psi_G(\alpha,1)$.  As before,
utilize the notation $\mu_G =\bE \xi_0^{-1}$ and  $\sigma_G^2
 = \bE\xi_0^{-2} $.

\begin{lem}\label{exponential} For   random exponential
distributions with rates bounded away from zero,
$$\Psi_{G} (\alpha
, 1 ) = \mu_G - 2 \sigma_G \sqrt{\alpha}+O(\alpha).$$
\end{lem}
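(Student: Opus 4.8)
The plan is to read off the asymptotics from the explicit description of the exponential limit shape recalled in Section~\ref{sec-main}: the functions $a(\cdot)$ and $g(\cdot)$ attached to the rate law $m$, and the identity~\eqref{PsiGg}. Specialising \eqref{PsiGg} to $(x,y)=(\alpha,1)$ gives $\Psi_G(\alpha,1)=\inf\{t\ge 0:\ t\,g(1/t)\ge\alpha\}$. Since $a'(0+)=1/\mu_G$ and $g(y)=0$ for $y\ge 1/\mu_G$, the map $t\mapsto t\,g(1/t)$ vanishes on $(0,\mu_G]$ and is continuous and strictly increasing on $[\mu_G,\infty)$, where it is a product of two nondecreasing nonnegative factors. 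Hence for small $\alpha>0$, $\Psi_G(\alpha,1)$ is the unique root $t(\alpha)$ of $t\,g(1/t)=\alpha$, and $t(\alpha)\downarrow\mu_G$ as $\alpha\downarrow 0$; so the leading term is $\mu_G$, and what remains is to quantify how $g$ vanishes at $1/\mu_G$.

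That is controlled by the behaviour of $a$ near $u=0$. Writing $u=\phi(a):=\int_{[c,\infty)}\frac{a}{\xi-a}\,m(d\xi)$ and inverting, $a=\phi^{-1}$ on a neighbourhood of $0$. Since $\xi\ge c>0$ $m$-a.s., all moments $\int\xi^{-k}\,m(d\xi)$ are finite, so $\phi$ is smooth at $0$ with $\phi(0)=0$, $\phi'(0)=\int\xi^{-1}\,m(d\xi)=\mu_G$ and --- the point where the variance surfaces --- $\phi''(0)=2\int\xi^{-2}\,m(d\xi)=2\sigma_G^2$. Inverting the Taylor series gives $a(u)=\mu_G^{-1}u-\sigma_G^2\mu_G^{-3}u^2+O(u^3)$, hence $a'(u)=\mu_G^{-1}-2\sigma_G^2\mu_G^{-3}u+O(u^2)$. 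For $y$ slightly below $1/\mu_G$, strict concavity of $a$ forces the supremum in the formula $g(y)=\sup_{u\ge 0}\{-yu+a(u)\}$ of \eqref{ga-dual} to be attained at the interior point $u(y)$ with $a'(u(y))=y$; the envelope identity $g'(y)=-u(y)$ together with the expansion of $a'$ then shows that $g$ has a zero of order exactly two at $1/\mu_G$, with second derivative there an explicit positive multiple of $\mu_G^3/\sigma_G^2$.

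It remains to substitute this local form of $g$ back into $t\,g(1/t)=\alpha$ and solve for $t$ near $\mu_G$: keeping only the quadratic term of $g$ (the remainder being $O((1/\mu_G-y)^3)$, hence negligible at the order needed) reduces the identity to an explicit relation between $t-\mu_G$ and $\sqrt\alpha$ with coefficient $2\sigma_G$ and error $O(\alpha)$, giving the claimed expansion $\Psi_G(\alpha,1)=\mu_G-2\sigma_G\sqrt{\alpha}+O(\alpha)$. I do not anticipate a genuine obstacle here: the substantive work --- producing the explicit limit shape --- was done in \cite{TimoKrug}, and what is left is a finite computation. The one step that requires care is justifying the Taylor expansion of $a$ at $0$ to the order used, and this is exactly where the hypothesis that the rates are bounded away from zero is needed: it guarantees $\sigma_G=(\int\xi^{-2}m(d\xi))^{1/2}<\infty$ and $\int\xi^{-3}m(d\xi)<\infty$, so that $a\in C^3$ near $0$, which underwrites both the $\sqrt\alpha$ coefficient and the sharper $O(\alpha)$ (rather than merely $o(\sqrt\alpha)$) error term.
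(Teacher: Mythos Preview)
Your approach is correct and follows the same line as the paper: both identify $\Psi_G(\alpha,1)$ as the root $t$ of $t\,g(1/t)=\alpha$ via \eqref{PsiGg}, locate the maximizer $u_0$ in \eqref{ga-dual} through $a'(u_0)=1/t$, and expand locally as $\alpha\downarrow 0$. The execution differs only in the choice of expansion variable: the paper parametrizes by $a_0=a(u_0)$ and works directly with the integral identities $\alpha=a_0^2\int(\xi-a_0)^{-2}\,m(d\xi)$ and $t=\alpha/a_0+\int(\xi-a_0)^{-1}\,m(d\xi)$, first establishing $\sqrt\alpha/a_0=\sigma_G+O(\sqrt\alpha)$ and then substituting; you instead Taylor-expand $a(u)$ at $u=0$ and use the envelope relation $g'(y)=-u(y)$ to read off the quadratic vanishing of $g$ at $1/\mu_G$. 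These are equivalent computations in dual variables.

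One caveat: if you actually carry your substitution through, the $\sqrt\alpha$ term emerges with a \emph{plus} sign, $\Psi_G(\alpha,1)=\mu_G+2\sigma_G\sqrt\alpha+O(\alpha)$, since $t(\alpha)$ increases from $\mu_G$. This is also what the paper's own proof concludes (its final display reads $\Psi_G(\alpha,1)-\mu_G-2\sigma_G\sqrt\alpha=O(\alpha)$); the minus sign in the lemma statement is a typo.
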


\begin{proof}
Recall the definition of the limit shape $\Psi_G(\alpha, 1)$ from \eqref{PsiGg}.
From \eqref{ga-dual} one can
read that $tg(1/t)$ is nondecreasing in $t$.
Thus by  \eqref{PsiGg}
 $\Psi_G(\alpha, 1) = t = t(\alpha)$ such that $t
g({1}/{t}) = \alpha$.

Next we argue that when
$\alpha$ is close enough to $0$,
$g({1}/{t}) = - u_0/t + a(u_0)$
for some $0< u_0 < u^*$ with $a'(u_0) = {1}/{t}$.
Since $a(0)=0$ and $ a(u^*-) =c$, strict concavity gives for $0<u<u^*$
\begin{align*}
\Bigl\{\int_{[c,\infty)}\frac{\xi}{ (\xi - c)^2} m(d\xi)\Bigr\}^{-1} =
a'(u^*-) &< a'(u) =
\Bigl\{\int_{[c,\infty)} \frac{\xi}{ (\xi - a(u))^2} m(d\xi)\Bigr\}^{-1} \\
&< a'(0+) =  \Bigl\{\int_{[c,\infty)} { \xi}^{-1}  m(d\xi)\Bigr\}^{-1}=
\frac{1}{\mu_G}.
\end{align*}
On the other hand,
$  0<   \Psi_G(\alpha, 1) - \mu_G \leq
C\sqrt{\alpha} +  C\alpha $
where the second inequality comes from comparing $\{G_j\}$  in \eqref{eqn:diff}
with identically zero  weights.
Thus  when $\alpha$ is small enough, ${1}/{t}$ is in
the range of $a'$.  Consequently   there exists  $u_0\in(0,u^*)$ such that $a'(u_0) =
{1}/{t}$, or equivalently,
\begin{equation}
\label{eqn:t1} \int_{[c,\infty)} \frac{\xi}{ (\xi - a(u_0))^2} m(d\xi)
=t.
\end{equation} From the choice of $t$, $\alpha = t g({1}/{t}) =
t\bigl(- u_0/t +a(u_0)\bigr) = -u_0 + ta(u_0)$ and so
\begin{equation} \label{eqn:t2} \Psi_G(\alpha, 1) = t =
\frac{\alpha}{a(u_0)} + \frac{u_0}{ a(u_0)} = \frac{\alpha}{a(u_0)}
+ \int_{[c,\infty)} \frac{1}{\xi-a(u_0)} d m(\xi).
\end{equation}
Combining  \eqref{eqn:t1} and \eqref{eqn:t2}  gives
%$$\frac{\alpha}{a(u_0)} + \int_{[c,\infty)}\frac{1}{p-a(u_0)} d m(p) =  \int_{[c,\infty)}\frac{p}{ (p - a(u_0))^2} m(dp)$$
%A simple calculation gives
\begin{equation}
\label{eqn:alhpa/a}\alpha = a(u_0)^2 \int_{[c,\infty)} \frac{1}{(\xi-
a(u_0))^2} m(d\xi).  \end{equation} From this
 \[  a(u_0)^2\sigma^2_G= a(u_0)^2 \int_{[c,\infty)} \frac{1}{\xi^2} m(d\xi)
\leq \alpha. \] Hence we have $0\le a(u_0)\le \sqrt \alpha
/\sigma_G.$

When $\alpha$  and hence $a(u_0)$ is   small, \eqref{eqn:alhpa/a} and the last
bound on $a(u_0)$
yield
\begin{equation}\begin{split}
0\le \frac{\alpha}{a(u_0)^2} - \sigma^2_G &= \int_{[c,\infty)}
\bigl[\frac{1}{(\xi - a(u_0))^2} - \frac{1}{\xi^2}\bigr] m(d\xi)\\
 &= \int_{[c,\infty)} \frac{2\xi a(u_0)-
 a(u_0)^2}{\xi^2(\xi-a(u_0))^2}m(d\xi)\\
 &\le 2\int_{[c,\infty)} \frac{ a(u_0)}{\xi(c-a(u_0))^2}m(d\xi) = O(\sqrt \alpha).
 %\le \sqrt\alpha \frac{4}{c^3\sigma_G}.
\end{split}
\end{equation}
Consequently  \[ \frac{\sqrt \alpha}{a(u_0)}-\sigma_G
=\frac{\frac{\alpha}{a(u_0)^2} - \sigma^2_G}{\frac{\sqrt
\alpha}{a(u_0)}+\sigma_G}= O(\sqrt \alpha).\]

Now we put all the above together to prove the lemma.
 \begin{align*}
&\Psi_G(\alpha, 1) -\mu_G - 2\sigma_G\sqrt\alpha \\
&= \frac{\alpha}{a(u_0)} + \int_{[c,\infty)} \frac{1}{\xi-a(u_0)} m(d\xi)
-\mu_G - 2\sigma_G\sqrt\alpha\\
  &= \frac{\alpha}{a(u_0)} + \int_{[c,\infty)} \Bigl[ \,\frac{1}{\xi} + \frac{1}{\xi^2} a(u_0) +
  O\bigl(a(u_0)^2\bigr)\Bigr] m(d\xi)-\mu_G - 2\sigma_G\sqrt\alpha\\
  &= \sqrt{\alpha} \Bigl( \,\frac{\sqrt\alpha}{a(u_0)}-\sigma_G\Bigr)
  + \sigma_G a(u_0)\Bigl( \sigma_G - \frac{\sqrt \alpha}{a(u_0)}    \Bigr)
   +  \alpha \cdot O\Bigl(\frac{a(u_0)^2}{\alpha}\Bigr)\\
&=O(\alpha) \quad \text{ as $\alpha\downarrow 0$. }\qedhere
 \end{align*}
%The factors multiplying $\sqrt\alpha$ vanish as $\alpha\downarrow 0$.
\end{proof}

Combining Lemma \ref{exponential} and \eqref{eqn:main2} gives
\[\lim_{\alpha \downarrow 0} \frac{1}{\sqrt{\alpha}}\bigl\lvert
\frac{1}{r}\Psi_{\overline{\Phi}_{r}} (\alpha r, 1 ) - 2\sigma_G
\sqrt{\alpha}\bigr\rvert=0.
\]
Substitute this back into  \eqref{eqn:main1} and recall  that $\sigma_F = \sigma_G$.
The conclusion we get is
 \be \lim_{\alpha \downarrow 0}
\frac{1}{\sqrt{\alpha}}|\Psi_{F} (\alpha , 1 ) - \mu_F- 2 \sigma_F
\sqrt{\alpha}| = 0. \label{aux30}\ee

We  have proved Theorem \ref{thm:main} under   assumptions
\eqref{M-bd} and \eqref{var-bd}. We now lift \eqref{var-bd}. For
$\e>0$, let $\{W(z)\}$ be i.i.d weights with distribution $H$
defined by $P(W(z)=\pm\e)=1/2$.  Let $\wt F_j=F_j*H$ be the
distribution of the weight $ \wt X(i,j)=X(i,j)+W(i,j)$. Let $\Psi_H$
and $\Psi_{\wt F}$ be the time constants of the last-passage models
with weights $\{W(z)\}$  and $\{\wt X(z)\}$, respectively. The
Bernoulli bound \eqref{bernoulli3} gives the estimate $
\Psi_{H}(x,y)\le 4\e\sqrt{xy}$. The corresponding last-passage times
satisfy
\[  T_{\wt F}(z) -T_H(z)\le T_F(z) \le   T_{\wt F}(z) +\hat{T}_H(z) \]
where $\hat{T}_H(z)$ uses the weights $-W(z)$. In the limit \be
\Psi_{\wt F}(\alpha, 1) -  4\e\sqrt{\alpha}
 \le \Psi_F(\alpha,1) \le
\Psi_{\wt F}(\alpha, 1) +  4\e\sqrt{\alpha}. \label{eqn:addH}\ee
Since $\sigma^2(\wt F_j)=\sigma^2(F_j) + \e^2$ while $\mu_{\wt
F}=\mu_F$, and $\e>0$ can be arbitrarily small,
  this estimate
suffices for limit \eqref{aux30}.

As the last item of the proof of Theorem \ref{thm:main} we remove
the uniform boundedness assumption \eqref{M-bd}.  Suppose $\{F_j\}$
 satisfy  the conditions required for
  Theorem \ref{thm:main}, but there is no common bounded support.
For a fixed $M>0$ define the truncated distributions
\[F_{j,M}(x) =
\begin{cases} 1 & x \geq M\\
F_j(x) & -M \leq x <M\\
0 & x< -M.  \end{cases}\] Let
$\mu_{M}$, $\sigma^2_{M}$ and  $\Psi_{F_{M}}(x,y)$  be
quantities associated to  $\{F_{j,M}\}$.

From \eqref{eqn:diff} and the conditions assumed in Theorem
\ref{thm:main},     \begin{align*}
 &\frac{1}{\sqrt{\alpha}}\abs{\Psi_F(\alpha,1) - \Psi_{F_{M}}(\alpha,1) - (\mu - \mu_M)}\\
& \leq   8 \int_{-\infty}^{+\infty}
\Bigl(\bE\abs{F_0(x)-F_{0,M}(x)}\Bigr)^{1/2} dx
 +
 \sqrt{\alpha} \int_{-\infty}^{+\infty}
\underset{\bP}{\esssup} |F_0(x) - F_{0,M}(x)|\, dx\\
&=8  \Bigl [ \int_{-\infty}^{-M}
\Bigl(\bE\abs{F_0(x)}\Bigr)^{1/2} dx + \int_{M}^{\infty}
\Bigl(\bE\abs{1-F_0(x)}\Bigr)^{1/2} dx\Bigr]\\
&\quad +  \sqrt{\alpha}
\Bigl[\int_{-\infty}^{-M} \underset{\bP}{\esssup} |F_0(x)|\, dx+
\int_{M}^{+\infty} \underset{\bP}{\esssup} |1-F_0(x) |\, dx\Bigr]  \le \e.
\end{align*}
The last inequality comes from choosing $M$ large enough, and is
valid for all   $\alpha \leq 1$.
Since $\bE (EX^2(0,0))   < \infty$, dominated convergence gives
  $\sigma_M \rightarrow \sigma$  and so
 we can pick $M$ so that
$\abs{\sigma - \sigma_M} <\e$.  Now
\begin{align*}
\frac{1}{\sqrt{\alpha}}\abs{\Psi_{F} (\alpha , 1 ) - \mu- 2 \sigma
\sqrt{\alpha}}
 \leq   \frac{1}{\sqrt{\alpha}}\abs{\Psi_{F_M}
(\alpha , 1 ) - \mu_M- 2 \sigma_{M} \sqrt{\alpha}}    + 2\e.
 \end{align*}
 Since $\e$ is arbitrary  and limit \eqref{aux30} holds for
$\{F_{j,M}\}$,  we  get the conclusion for the sequence $\{F_j\}$.
This concludes the proof of Theorem \ref{thm:main}.

\section{Proof of Theorem \ref{1,a-thm}}
\label{sec:1,a-thm}

\begin{proof}[Proof of Theorem \ref{1,a-thm}.]
   The lower bound  in \eqref{1,a-bd}  can be  proved by applying Martin's result
\eqref{martin-1}  to the homogeneous  problem where a maximal path is constructed
by using only those rows $j$  where $F_j=H_{i^*}$, the distribution with the
maximal mean $\mu^*=\mu(H_{i^*})$.  This is fairly straightforward and we leave the
details to the reader.

To prove the upper bound  in \eqref{1,a-bd}, we   start by increasing all the weights $X(z)$  by moving their
means to $\mu^*$.   Then we subtract the common mean $\mu^*$ from the  weights,
so that for the proof we can assume that all distributions $H_1,\dotsc, H_L$
have {\sl mean zero.}

Create the following coupling.  Independently of the process $\{F_j\}$,
let $\{X_\ell(z): 1\le \ell\le L, \, z\in\bZ_+^2\}$ be a collection of independent
weights such that  $X_\ell(z)$ has distribution  $H_\ell$.  Then define the weights
used for computing $\Psi(1,\alpha)$ by
\[   X(z)= \sum_{\ell=1}^L   X_\ell(z) I_{\{F_j=H_\ell\}}
\quad\text{for $z=(i,j)\in\bZ_+^2$.}  \]
Begin with  this elementary bound:
\be\begin{aligned}
\Psi(1,\alpha) &= \lim_{n \rightarrow \infty} \frac{1}{n}\mE\Bigl[ \;\max_{\pi \in
\Pi(n, \lfloor \alpha n \rfloor)} \sum_{z \in \pi} X(z)\,\Bigr] \\
&\le \sum_{\ell=1}^L   \lim_{n \rightarrow \infty} \frac{1}{n}\mE\Bigl[ \;\max_{\pi \in
\Pi(n, \lfloor \alpha n \rfloor)} \sum_{z \in \pi} X_\ell(z) I_{\{F_j=H_\ell\}}\,\Bigr].
\end{aligned}  \label{line-b5} \ee
The next lemma contains a convexity argument that will remove the indicators from the last
passage values above.

\begin{lem} Let $\cD$ be a sub-$\sigma$-field on a probability space $(\Omega, \cF, P)$,
$D$ an event in $\cD$,  and $\xi$ and $\eta$ two integrable random variables.  Assume that
$E\eta=0$, $\eta$ is independent of $\cD$, and   $\xi$ and $\eta$ are independent
conditionally on $\cD$.
Then  $E[\,\xi\vee (\eta I_D)\,]\le E[\,\xi\vee\eta\,]$.
\label{conv-lem}\end{lem}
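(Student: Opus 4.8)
\emph{Proof proposal.} The plan is to decompose the expectation according to the event $D\in\cD$ and its complement. On $D$ we have $\eta I_D=\eta$, so $\xi\vee(\eta I_D)=\xi\vee\eta$ there; on $D^c$ we have $\eta I_D=0$, so $\xi\vee(\eta I_D)=\xi\vee0=\xi^+$. Subtracting the two pieces, the asserted inequality $E[\xi\vee(\eta I_D)]\le E[\xi\vee\eta]$ reduces to showing
\[
E\bigl[\,\xi^+\,I_{D^c}\,\bigr]\;\le\;E\bigl[\,(\xi\vee\eta)\,I_{D^c}\,\bigr].
\]
Integrability of $\xi\vee\eta$ and $\xi\vee(\eta I_D)$, needed for every expectation below, comes for free from $|\xi\vee\eta|\le|\xi|+|\eta|$.

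The main step is to enlarge the conditioning field: let $\cG=\sigma(\cD,\xi)$. I would first check that $\eta$ is independent of $\cG$, not merely of $\cD$. For $A\in\cD$ and Borel sets $B,C$, conditioning on $\cD$, using the conditional independence of $\xi$ and $\eta$ given $\cD$, and using $P(\eta\in C\mid\cD)=P(\eta\in C)$ (valid since $\eta$ is independent of $\cD$), one gets $P(A\cap\{\xi\in B\}\cap\{\eta\in C\})=P(\eta\in C)\,P(A\cap\{\xi\in B\})$; since the sets $A\cap\{\xi\in B\}$ form a $\pi$-system generating $\cG$, this yields $\eta\perp\cG$, and in particular $E[\eta\mid\cG]=E\eta=0$. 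Now write $\xi\vee\eta=\xi+(\eta-\xi)^+$. Since $\xi$ is $\cG$-measurable, the conditional Jensen inequality for the convex map $t\mapsto t^+$ gives
\[
E[\,\xi\vee\eta\mid\cG\,]=\xi+E\bigl[(\eta-\xi)^+\mid\cG\bigr]\;\ge\;\xi+\bigl(E[\eta\mid\cG]-\xi\bigr)^{+}=\xi+(-\xi)^{+}=\xi^{+}.
\]

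To finish, note $D^c\in\cD\subseteq\cG$ and $I_{D^c}\ge0$, so multiplying the last display by $I_{D^c}$ and taking expectations gives exactly the reduction displayed above, completing the argument. The only point that needs a little care — not really an obstacle — is the independence $\eta\perp\cG$; everything else is the elementary case split and the standard conditional Jensen inequality. If one prefers to avoid the enlarged field, an equivalent route is to compute $E[\xi\vee\eta\mid\cD]$ through the regular conditional law of $\xi$ given $\cD$ against the law $\nu$ of $\eta$, and apply Jensen in the $\eta$-variable using $\int y\,\nu(dy)=0$, obtaining $E[\xi\vee\eta\mid\cD]\ge E[\xi^+\mid\cD]$, which again suffices since $D^c\in\cD$.
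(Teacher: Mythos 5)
Your proof is correct and takes essentially the same approach as the paper: split on $D$ versus $D^c$, then show that the mean-zero, independence, and conditional-independence hypotheses force $E[\xi\vee\eta\mid\cdot\,]\ge\xi^+$ via conditional Jensen. The only (minor) difference is that you condition on the enlarged field $\cG=\sigma(\cD,\xi)$, after verifying $\eta\perp\cG$, whereas the paper applies Jensen pointwise in $x$ given $\cD$ and then integrates against the regular conditional law of $\xi$ given $\cD$ — which is exactly the alternative route you mention at the end.
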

\begin{proof}   By Jensen's inequality, for any fixed $x\in\bR$,
\[   x\vee E(\eta\,\vert\,\cD) \le  E( x\vee \eta\,\vert\,\cD). \]
Since $\eta$ is independent of $\cD$ and mean zero,
\[   x\vee 0 \le  E( x\vee \eta\,\vert\,\cD). \]
Integrate this against the conditional distribution $P(\xi\in dx\,\vert \,\cD)$ of $\xi$,
given $\cD$, and use the conditional independence of $\xi$ and $\eta$:
\[   E( \xi\vee 0 \,\vert\,\cD)  \le  E( \xi\vee \eta\,\vert\,\cD). \]
Next integrate this over the event $D^c$:
\[  E\bigl[ I_{D^c} \cdot\,\xi\vee (\eta I_D)\, \bigr]   =E\bigl[ I_{D^c} \cdot \, \xi\vee 0\,\bigr]
\le  E\bigl[ I_{D^c} \cdot\,\xi\vee \eta \, \bigr].
\]
The corresponding integral over the event $D$ needs no argument.
\end{proof}

Fix a lattice point $z_0=(i_0,j_0)$ for the moment.  We split the maximum in \eqref{line-b5}
according to whether the path $\pi$ goes through $z_0$ or not, and in case it goes
we also separate the weight at $z_0$:
\[ \max_{\pi
\in \Pi(n, \lfloor n\alpha  \rfloor )} \sum_{z \in \pi} X_\ell(z) I_{\{F_j=H_\ell\}}=
 A\vee  \bigl(B + X_\ell(z_0) I_{\{F_{j_0}=H_\ell\}} \bigr)
 = B +\bigl(A-B\bigr)\vee \bigl(X_\ell(z_0) I_{\{F_{j_0}=H_\ell\}} \bigr)
\] where
\[ A = \max_{\pi\not\ni z_0 } \sum_{z\in \pi} X_\ell(z)I_{\{F_j=H_\ell\}}
\quad \text{and}\quad
 B = \max_{\pi\ni z_0  } \sum_{z\in \pi\setminus\{z_0\} } X_\ell(z)I_{\{F_j=H_\ell\}} .\]
Now apply Lemma \ref{conv-lem} with $\xi=A-B$, $\eta=X_\ell(z_0)$, and
$D=\{F_{j_0}=H_\ell\}$.  Given $F_{j_0}$,  $A-B$ does not look at $X_\ell(z_0)$, so the
independence assumed in Lemma \ref{conv-lem}  is satisfied.  The outcome from that
lemma is the inequality
\begin{align*}
\mE\Bigl[ \;\max_{\pi \in
\Pi(n, \lfloor \alpha n \rfloor)} \sum_{z \in \pi} X_\ell(z) I_{\{F_j=H_\ell\}}\,\Bigr]
\le
\mE\bigl[  A\vee (B + X_\ell(z_0)) \bigr].
\end{align*}
This is tantamount to replacing the weight $X_\ell(z_0) I_{\{F_{j_0}=H_\ell\}} $ at $z_0$  with
$X_\ell(z_0)$.

 We can repeat this at all lattice points $z_0$ in  \eqref{line-b5}.  In the end we have
 an upper bound in terms of homogeneous last-passage values, to which we
 can apply Martin's result \eqref{martin-1}:
\begin{align*}
\Psi(1,\alpha) & \le \sum_{\ell=1}^L   \lim_{n \rightarrow \infty} \frac{1}{n}\mE\Bigl[ \;\max_{\pi \in
\Pi(n, \lfloor \alpha n \rfloor)} \sum_{z \in \pi} X_\ell(z)  \,\Bigr]= \sum_{\ell=1}^L \Psi_{H_\ell}(1,\alpha)
\\
&=  2\sqrt\alpha  \sum_{\ell=1}^L \sigma(H_\ell)  + o(\sqrt\alpha).
\end{align*}
This completes the proof of Theorem \ref{1,a-thm}.
\end{proof}

\section{Proofs for the exponential model}
\label{sec:exp-thm}

\begin{proof}[Proof of Theorem \ref{exp-thm}] Equation \eqref{PsiGg} gives
  \be \Psi_G(1,\alpha) =
\inf\{t \geq 0: tg({\alpha}/{t}) \geq 1\}  = t(\alpha) =  t . \ee
That the   infimum is achieved  can be seen from \eqref{ga-dual}.

Under Case 1 the critical value $u^* = \int_{[c,\infty)} c{ (\xi
-c)^{-1} } \,{ m(d\xi)}<\infty$, and also
\[  a'(u^*-)= \biggl\{ \int_{[c,\infty)}  \frac{\xi}{ (\xi - c)^2} \,m(d\xi)\biggr\}^{-1} > 0. \]
By the concavity of $a$ and \eqref{ga-dual},
for $0\le y\le a'(u^*-)$ we have  $g(y)=-yu^*+c$.
Consequently for small enough $\alpha$
\[   1 = tg({\alpha}/{t}) = -\alpha c \int_{[c,\infty)}  \frac{1}{\xi-c}
m(d\xi) + ct\] and equation \eqref{expcase1} follows.

In Case 2 $a'(0+)> a'(u^*-)= 0$ and hence for small enough $\alpha>0$  there exists a unique
$u_0\in(0,u^*) $ such that $a'(u_0) = {\alpha}/{t}$.  Set  $a_0 = a(u_0)\in(0,c)$.
As $\alpha\searrow 0$, both $u_0\nearrow u^*$ and $a_0\nearrow c$.
We have the
equations
 \[   a'(u_0)^{-1} =\int_{[c,\infty)}
\frac{\xi}{ (\xi - a_0)^2} \,m(d\xi) = \frac{t}{\alpha} \,, \quad  1
= t g({\alpha}/{t})  = -\alpha u_0 + ta_0\,,  \]
 \begin{equation}\label{case2}   \Psi_G(1,\alpha) \, =\, t \, = \, \frac{1}{a} + \frac{\alpha
u_0}{ a_0} \, = \, \frac{1}{a_0} + \alpha \int_{[c,\infty)}
\frac{1}{\xi-a_0} \,m(d\xi)\end{equation}and  \be
\label{excases}\frac{1}{a_0^2} = \alpha \int_{[c,\infty)}
\frac{1}{(\xi-a_0)^2}\,m(d\xi).\ee

Assuming $\eqref{exp-ass1}$,   start  with $\nu \in (-1,0) \cup
(0,1)$. For a small enough $\e>0$ there are constants
$0<\kappa_1<\kappa_2$ such that \be \kappa_1(\xi-c)^{\nu+1}\le
{m[c,\xi)}\le\kappa_2{(\xi-c)^{\nu+1}} \quad\text{ for $\xi
\in[c,c+\e]$ }\label{exp9}\ee and as $\e\searrow 0$ we can take
$\kappa_1,\kappa_2\to \kappa$. First   we estimate $c-a_0$.  Fix
$\e>0$.
\begin{align*}
\frac{1}{\alpha} &= a_0^2 \int_{[c,\infty)} \frac{1}{(\xi - a_0)^2}
\,m(d\xi)
= 2{a_0^2}\int_c^\infty  \frac{m[c,\xi)}{(\xi - a_0)^3}\,d\xi\\
&=2{a_0^2}\int_c^{c+\e}  \frac{m[c,\xi)}{(\xi - a_0)^3}\,d\xi +
C_1(\e)
\end{align*}
for a quantity $C_1(\e)=O(\e^{-2})$.  The first term above can be bounded
above and below by \eqref{exp9}, and we develop both bounds together
for $\kappa_i$, $i=1,2$,  as
\be\begin{aligned}
&2\kappa_i {a_0^2}\int_c^{c+\e}  \frac{(\xi-c)^{\nu+1}}{(\xi - a_0)^3}\,d\xi  +  C_1(\e)  \\
 &= 2\kappa_i a_0^2 \int_{c}^{c+\e} \frac{\bigl[(\xi-a_0)-(c-a_0)\bigr]^{\nu+1}}{(\xi-a_0)^3}
 d\xi
  +  C_1(\e) \\
 & =  2\kappa_i  a_0^2 \sum_{k=0}^\infty  \binom{\nu+1}{k}  (-1)^k(c - a_0)^k
 \int_{c}^{c+\e} (\xi-a_0)^{\nu - k-2} d\xi +  C_1(\e) \\
& = 2\kappa_i  a_0^2 \sum_{k=0}^\infty  \binom{\nu+1}{k}  (-1)^k(c - a_0)^k \frac{(c - a_0)^{\nu-k-1} - (c+\e-a_0)^{\nu-k-1} }{k-\nu+1} +  C_1(\e) \\
& = 2\kappa_i  a_0^2 A_\nu (c - a_0)^{\nu-1}
-  2\kappa_i  a_0^2 \sum_{k=0}^\infty  \binom{\nu+1}{k} \frac{(-1)^k}{k-\nu+1}
(c - a_0)^k (c+\e-a_0)^{\nu-k-1}   +  C_1(\e) \\
& = 2\kappa_i  a_0^2 A_\nu (c - a_0)^{\nu-1} +  C_1(\e).
\end{aligned} \label{1overalpha}\ee
$C_1(\e)$ changed of course in the last equality.
In the next to last equality above we defined
\[   A_\nu= \sum_{k=0}^\infty  \binom{\nu+1}{k} \frac{(-1)^k}{k-\nu+1}. \]
 Rewrite the above development in the form
\[   (c - a_0)^{1-\nu} =  2\kappa c^2 A_\nu\alpha  +
\alpha[ 2A_\nu(\kappa_i  a_0^2-\kappa c^2)+C_1(\e)(c - a_0)^{1-\nu}].  \]
Now choose $\e=\e(\alpha)\searrow 0$ as $\alpha\searrow 0$ but slowly enough
so that $C_1(\e)(c - a_0)^{1-\nu}\to 0$ as $\alpha\searrow 0$.  Then
also $\kappa_i  a_0^2\to\kappa c^2 $ and
we can write
\be c-a_0 = B_0\alpha^{\frac1{1-\nu}} +o(\alpha^{\frac1{1-\nu}})  \label{exp12}\ee
with a new constant $B_0=(2\kappa c^2 A_\nu)^{\frac1{1-\nu}}$.

Now consider the case   $\nu \in (0,1)$ which also guarantees
$\int_{[c,\infty)} (\xi - c)^{-1} \,m(d\xi)<\infty$.  From
\eqref{case2} and \eqref{exp12} as  $\alpha\searrow 0$
 \be \begin{aligned}&\Psi_G(1,\alpha)  = \frac{1}{a_0}+ \alpha \int_{[c,\infty)} \frac{1}{\xi - a_0}\, m(d\xi)\\
&=\frac{1}{c}+ \alpha \int_{[c,\infty)} \frac{1}{\xi - c} m(d\xi)
+O(\alpha^{\frac1{1-\nu}})  + \alpha  \biggl( \,  \int_{[c,\infty)}
\frac{1}{\xi - a_0}\, m(d\xi)
-   \int_{[c,\infty)} \frac{1}{\xi - c} \,m(d\xi) \biggr) \\
&=\frac{1}{c}+ \alpha \int_{[c,\infty)} \frac{1}{\xi - c} \,m(d\xi)
+o(\alpha).
 \nn\end{aligned}\ee

Next the case $\nu \in (-1,0)$.  The steps are similar to those above
 so we can afford to be
sketchy.
 \begin{align*}
&\Psi_G(1,\alpha)= \frac{1}{a_0} +  \alpha \int_{[c,\infty)}
\frac{1}{\xi-a_0}
\,m(d\xi)\\
&=\frac1c + \frac{c-a_0}{c^2}   + \frac{(c-a_0)^2}{c^2a_0} +\alpha
\int_c^{c+\e}\frac{m[c,\xi)}{(\xi-a_0)^2}\,d\xi + \alpha  C_1(\e).
\end{align*}
Again, using \eqref{exp9} and proceeding as in \eqref{1overalpha},
we develop an upper and a lower bound for the quantity above  with
distinct constants $\kappa_i$, $i=1,2$. After bounding $m[c,\xi)$
above and below with $\kappa_i(\xi-c)^{\nu+1}$  in the integral,
write $(\xi-c)^{\nu+1}=((\xi-a_0)-(c-a_0))^{\nu+1}$ and expand in
power series.
 \begin{align*}
&\frac1c + B_0c^{-2}\alpha^{\frac1{1-\nu}} +o(\alpha^{\frac1{1-\nu}})
+\alpha \kappa_i \int_c^{c+\e}\frac{(\xi-c)^{\nu+1}}{(\xi-a_0)^2}\,d\xi  + \alpha  C_1(\e) \\
&=\frac1c + B_0c^{-2}\alpha^{\frac1{1-\nu}} +o(\alpha^{\frac1{1-\nu}})
+\alpha \kappa_i  (c-a_0)^\nu \sum_{k=0}^\infty  \binom{\nu+1}{k}  \frac{(-1)^k}{k-\nu} \\
&\qquad \qquad \qquad
+ \alpha \kappa_i  (c-a_0+\e)^\nu \sum_{k=0}^\infty  \binom{\nu+1}{k}  \frac{(-1)^k}{\nu-k}
\biggl(\frac{c-a_0}{c-a_0+\e}\biggr)^k
+ \alpha  C_1(\e) \\
&= \frac1c + B\alpha^{\frac1{1-\nu}} +o(\alpha^{\frac1{1-\nu}})
+  A_{\nu, 2}  \alpha (\kappa_i-\kappa)   (c-a_0)^\nu   + \alpha  C_1(\e).
 \end{align*}
In the last equality the next to last term with the  $\sum_{k=0}^\infty $ sum was subsumed in the $ \alpha  C_1(\e)$
term.  Then
 we introduced   new constants
\be A_{\nu, 2} = \sum_{k=0}^\infty  \binom{\nu+1}{k}  \frac{(-1)^k}{k-\nu}
\quad\text{and}\quad
  B= B_0c^{-2} + \kappa B_0^\nu A_{\nu, 2}. \label{exp18}\ee
  As before, by letting $\e=\e(\alpha)\searrow 0$  slowly enough as $\alpha\searrow 0$
we can extract  $\Psi_G(1,\alpha)=c^{-1} + B\alpha^{\frac1{1-\nu}} +o(\alpha^{\frac1{1-\nu}})$
from the above bounds.

It remains to treat the cases $\nu=-1, 0, 1$   where integration of the type
done in \eqref{1overalpha} is elementary.
We omit the details.  \end{proof}

\end{document}